\documentclass{article}

 \usepackage[preprint]{neurips_2026}

\usepackage[utf8]{inputenc} 
\usepackage[T1]{fontenc}    
\usepackage{hyperref}       
\usepackage{url}            
\usepackage{booktabs}       
\usepackage{nicefrac}       
\usepackage{microtype}      
\usepackage{xcolor}         
\usepackage{bm}
\usepackage{amsmath,amsfonts,amsthm}
\usepackage{subcaption}
\usepackage{latexsym}
\usepackage{mathrsfs}
\usepackage{float}
\usepackage{graphicx}
\usepackage{cleveref}
\usepackage{enumitem}
\usepackage{tcolorbox}
\usepackage{array} 
\usepackage{mathtools, amssymb}

\newtheorem{lemma}{Lemma}
\newtheorem{theorem}{Theorem}
\newtheorem{corollary}{Corollary}
\newtheorem{definition}{Definition}
\newtheorem{proposition}{Proposition}

\newtheorem{assumption}{Assumption}
\newtheorem{fact}{Fact}
\newtheorem{remark}{Remark}
\newtheorem{exam}{Example}

\def\R{\mathbb{R}}
\def\P{\mathbb{P}}
\def\E{\mathbb{E}}
\def\mO{\mathcal{O}}
\newcommand{\st}{\mbox{s.t.}}
\newcommand{\bx}{\boldsymbol{x}}
\newcommand{\mN}{\mathcal{N}}
\newcommand{\bI}{\boldsymbol{I}}
\newcommand{\be}{\boldsymbol{e}}
\newcommand{\ba}{\boldsymbol{a}}
\newcommand{\bV}{\boldsymbol{V}}
\newcommand{\bU}{\boldsymbol{U}}
\newcommand{\bP}{\boldsymbol{P}}
\newcommand{\bQ}{\boldsymbol{Q}}

\newcommand{\revise}[1]{{\color{blue}#1}}
\title{A Concentration Inequality for the Covariance Matrix of an Arbitrary Subset of Random Vectors\thanks{Corresponding author: Peng Wang (Email: \texttt{pengw@um.edu.mo})}}

\author{%
  Huikang Liu\\
  Shanghai Jiao Tong University, Shanghai\\
  \texttt{ hkl1u@sjtu.edu.cn} \\
  \And
  Peng Wang \\
  University of Macau, Macao\\ 
  \texttt{pengw@um.edu.mo}
  \\
  \AND
  Laura Balzano \\
  University of Michigan, Ann Arbor\\
  \texttt{girasole@umich.edu}
}

\begin{document}

\maketitle

\vspace{-0.1in}
\begin{abstract}

Concentration inequalities for sample covariance matrices are fundamental tools in high-dimensional probability. Classical results typically assume that the selected random vectors are independent of the selection rule. In this paper, we study spectral concentration for sample covariance matrices formed from arbitrary, possibly data-dependent subsets of i.i.d. random vectors. Such data-dependent selection destroys the usual independence structure and makes standard covariance concentration bounds inapplicable. For i.i.d. Gaussian random vectors, we prove high-probability lower and upper bounds for the minimal and maximal eigenvalues of such selected covariance matrices. Compared with a direct union-bound argument, our results provide substantially sharper guarantees and allow much smaller subset proportions. We further discuss extensions from Gaussian to sub-Gaussian random vectors, and beyond independence to weakly dependent observations, with geometrically strong-mixing Gaussian sequences serving as a representative example of the latter. Finally, we apply the developed concentration inequalities to the K-subspace clustering problem under a low-rank Gaussian mixture model, where the optimal clusters are inherently data-dependent. Our results yield recovery guarantees showing that the clustering error of global minimizers decays polynomially with the signal-to-noise ratio.

\end{abstract}

\vspace{-0.1in}
\section{Introduction}\label{sec:intro}

Concentration inequalities for estimating the spectrum of sample covariance matrices of independent random vectors have been extensively studied in the literature and have found wide applications in statistics, signal processing, and machine learning \cite{ledoux2001concentration,tropp2015introduction,vershynin2018high}. Typically, these results assume that the random vectors in concentration inequalities are independent and identically distributed (i.i.d.).
A classic example of these concentration results is the spectral bound of the sample covariance matrix of i.i.d. Gaussian random vectors \citep[Theorem 4.7.1]{vershynin2018high},  a consequence of which is as follows\footnote{A proof of \Cref{lem:cov esti} can be found in \cite{so2022lecture}.}.  
\begin{fact}\label{lem:cov esti}
Suppose that $\{\bm a_i\}_{i=1}^M \subseteq \R^m$ are i.i.d. standard Gaussian random vectors. 
It holds for $\eta >0$ with probability at least $1-2\exp\left(-2\eta^2\right)$ that 
\begin{align}\label{eq:spec}
\left\| \frac{1}{M} \sum_{i=1}^M \bm a_i\bm a_i^T  -  \bm I_m \right\| \le \frac{9(\sqrt{m}+\eta)}{\sqrt{M}}\;.
\end{align} 
\end{fact}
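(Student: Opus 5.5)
The plan is the standard $\varepsilon$-net argument combined with a Bernstein-type tail bound for sub-exponential variables.

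\emph{Reduction to unit vectors.} Let $\bm S = \frac1M\sum_{i=1}^M \bm a_i\bm a_i^T - \bm I_m$, which is symmetric, so $\|\bm S\| = \sup_{\bm x\in S^{m-1}} |\bm x^T\bm S\bm x|$. Fix a $1/4$-net $\mathcal{N}$ of the unit sphere $S^{m-1}$ with $|\mathcal{N}|\le 9^m$, using the volumetric bound $(1+2/\varepsilon)^m$. The usual approximation argument for symmetric matrices gives
\[
\|\bm S\| \le \frac{1}{1-2\varepsilon}\max_{\bm x\in\mathcal{N}}|\bm x^T\bm S\bm x| = 2\max_{\bm x\in\mathcal{N}}|\bm x^T\bm S\bm x| .
\]
So it suffices to control finitely many scalar deviations.

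\emph{Fixed direction.} For a fixed unit vector $\bm x$, we have $\bm x^T\bm S\bm x = \frac1M\sum_i (g_i^2-1)$ with $g_i=\langle \bm a_i,\bm x\rangle$ i.i.d.\ $\mathcal N(0,1)$. The summands are centered chi-square variables with one degree of freedom, hence sub-exponential with an absolute constant. I would use the explicit Laurent--Massart bound
\[
\P\Big(\big|\textstyle\sum_i (g_i^2-1)\big| \ge 2\sqrt{Mt}+2t\Big)\le 2e^{-t},
\]
which keeps every constant explicit.

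\emph{Union bound.} Choose $t = c(m+\eta^2)$ so that $9^m\cdot 2e^{-t}\le 2e^{-2\eta^2}$; this works with $t = m\ln 9 + 2\eta^2$. Then, with probability at least $1-2e^{-2\eta^2}$, every net point satisfies
\[
|\bm x^T\bm S\bm x|\le 2\sqrt{t/M}+2t/M .
\]
Using $\sqrt{t}\le \sqrt{\ln 9}\,\sqrt m+\sqrt2\,\eta$, the first term gives $\|\bm S\|\lesssim (\sqrt m+\eta)/\sqrt M$ with an explicit constant below $9$.

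\emph{Main obstacle.} The difficulty is the quadratic term $t/M$, which is of order $\delta^2$ rather than $\delta$, where $\delta=(\sqrt m+\eta)/\sqrt M$. I would split into two cases. If $\delta\le 1$, then $t/M\lesssim \delta^2\le\delta$, and the bound follows after tracking constants. If $\delta>1$, the claim $\|\bm S\|\le 9\delta$ is weaker, but the quadratic term can exceed $9\delta$ when $\delta$ is large, so the scalar bound alone does not suffice.

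For that regime I would prove the statement in the form $\max(\delta,\delta^2)$, as in Vershynin. To recover the paper's linear form, I would first check whether the intended regime is $M\gtrsim m$ or the reference \cite{so2022lecture}. If it must hold for all parameters, I would bound $\|\bm S\|\le \max\{1, \|\bm A\|^2/M\}$, where $\bm A$ is the matrix with rows $\bm a_i^T$. I would then control $\|\bm A\|\le \sqrt M+\sqrt m+\eta'$ via Gaussian concentration of Lipschitz functions, which gives a bound of order $\delta+\delta^2$. Comparing this to $9\delta$ only works for bounded $\delta$, so I would expect the fact to be intended for $\delta\lesssim 1$ and note that assumption. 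Tuning the net radius and the split between $m$ and $\eta^2$ to reach the constant $9$ is the main bookkeeping step.
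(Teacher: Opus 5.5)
Your suspicion about the large-$\delta$ regime is right. It is a real failure of the statement, not just a missing hypothesis: the Fact is false as stated, so no proof of the unrestricted version can exist. Since $\|\bm S\|\ge \|\bm a_1\|^2/M-1$, for fixed $m,M$ the event $\|\bm S\|>9\delta$ has probability at least $\P\bigl(\|\bm a_1\|^2>M+9\sqrt M(\sqrt m+\eta)\bigr)=e^{-\Theta(\eta\sqrt M)}$ as $\eta\to\infty$. This beats $2e^{-2\eta^2}$ for all large $\eta$ (roughly $\eta\gtrsim\sqrt M$). For example, with $m=M=1$ and $\eta=5$ one has $\P(|a^2-1|>54)=\P(a^2>55)\approx 10^{-13}$, against the claimed $2e^{-50}\approx 4\times 10^{-22}$. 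The Fact also fails when $m\gg M$: for $M=1$, $m=10^4$, $\eta=1$, one has $\|\bm S\|\approx 10^4>909$ with overwhelming probability, while the claimed probability is about $0.73$. The paper supplies no argument of its own. It cites lecture notes and Vershynin's Theorem~4.7.1, whose actual conclusion has the form $C(\delta+\delta^2)$; that is exactly what your net plus Laurent--Massart argument yields. The paper only uses the Fact at $\eta=\sqrt M$ with $m\le M$, i.e.\ $\delta\in[1,2]$.

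The genuine gap is in the case you treat as bookkeeping.

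\emph{The net bound does not reach the constant $9$.} On your event, $\|\bm S\|\le 4\sqrt{t/M}+4t/M\le 4\sqrt{\ln 9}\,\delta+4\ln 9\,\delta^2\approx 5.93\,\delta+8.79\,\delta^2$. This is at most $9\delta$ only for $\delta\le 0.35$. At $\delta\approx 1$ (take $m=M$, $\eta$ small) it is about $14.7\,\delta$. Optimizing the net radius, i.e.\ trading $\ln(1+2/\varepsilon)$ against $1/(1-2\varepsilon)$, only pushes the threshold to about $0.6$. At the paper's point $m=M$, $\eta=\sqrt M$, your $1/4$-net gives about $25$ instead of the required $18$. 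So ``if $\delta\le 1$ the bound follows after tracking constants'' does not hold.

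\emph{What closes it.} You need the sharp Gaussian singular-value bound, which you mention only as a fallback and attribute to Lipschitz concentration alone. The essential input is Gordon's inequality: $\E s_{\max}(\bm A)\le\sqrt M+\sqrt m$ and, for $M\ge m$, $\E s_{\min}(\bm A)\ge \sqrt M-\sqrt m$. A net bound on $\E\|\bm A\|$ would reintroduce a constant larger than one. Combine this with $1$-Lipschitz concentration at $t=2\eta$; its two one-sided tails give exactly the probability $1-2e^{-2\eta^2}$. Writing $p=\sqrt{m/M}$, $q=\eta/\sqrt M$ and $x=p+2q\le 2\delta$, you get $\|\bm S\|\le x(2+x)\le 4\delta+4\delta^2$ with no net at all. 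Hence $\|\bm S\|\le 9\delta$ whenever $\delta\le 5/4$. The same bound also holds along the paper's line $q=1$, $p\le 1$, where it gives $\|\bm S\|\le 15\le 18$. That restricted statement is the one you should prove.
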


However, the independence assumption may be violated in applications. A typical example is subspace clustering, a widely adopted method for identifying low-dimensional subspaces in high-dimensional datasets \cite{soltanolkotabi2014robust,vidal2011subspace}. Here, suppose that the data samples are generated by a low-rank Gaussian mixture model (see \Cref{def:MoG}), which is a commonly used model for theoretical analysis of the subspace clustering problem; see, e.g., \cite{soltanolkotabi2014robust,soltanolkotabi2012geometric,wang2022convergence}. 

When analyzing the optimal cluster assignments and the corresponding subspace bases of the objective function for this problem (see Problem \eqref{eq:loss}), the minimal eigenvalue of the sample covariance matrix in each optimal cluster assignment plays a core role in bounding the clustering error and subspace recovery error. However, one can observe that both the optimal cluster assignments and the subspace bases depend on the random vectors in the mixture model. Consequently, the selected index sets are statistically coupled with the data, so the selected samples cannot be treated as an independent sample drawn independently of the selection rule. Several related lines of work have extended concentration inequalities either to uniform control over adversarially selected large subsets \cite{diakonikolas2019robust,lecturenote}
or to certain dependent stochastic processes \cite{park2021estimating,paulin2015concentration,tropp2011freedman}. However, to the best of our knowledge, few results establish sharp spectral concentration inequalities for sample covariance matrices under the selection-induced dependence arising in the above setup. We defer a more detailed comparison to \Cref{sec:relatedwork}. 



\subsection{Our contributions.} To address this challenge, we study spectral concentration for sample covariance matrices formed from arbitrary, possibly data-dependent subsets of i.i.d. random vectors. Our contributions are threefold:

First, we establish concentration inequalities for the minimal and maximal eigenvalues of sample covariance matrices formed from arbitrary subsets of i.i.d. Gaussian random vectors; see \Cref{thm:lowerbound,thm:upperbound}. 
We also prove complementary bounds demonstrating their tightness; see \Cref{coro:lowerboundtightness}. 
Finally, we provide a sub-Gaussian extension under additional nondegeneracy and uniform tail assumptions; see \Cref{thm:lowerbound:general}.

Second, we extend our concentration framework beyond the independent setting. 
In particular, we show that the proof strategy remains valid for weakly dependent data as long as the key probabilistic ingredients are available, and we verify this principle for geometrically strong-mixing Gaussian sequences; see \Cref{thm:mixing-lower-bound}.  
This extension demonstrates that our approach can handle data-dependent covariance matrices arising from dependent observations.

Finally, we apply the established concentration inequalities to analyze the global optimality of the K-subspace clustering problem (see Problem \eqref{eq:loss}) under a low-rank Gaussian mixture model. 
By leveraging the concentration result in \Cref{thm:lowerbound}, we show that the clustering error of the global optimal solutions decreases polynomially as the signal-to-noise ratio (SNR) increases; see \Cref{thm:opti 1}.  



\subsection{Related Work}\label{sec:relatedwork}

\paragraph{Numerically erasure-robust frames (NERFs).}
A related line of work studies NERFs, i.e., frames for which every sufficiently large subframe obtained after arbitrary erasures remains uniformly well-conditioned. Fickus and Mixon \cite{fickus2012numerically} introduced this notion and showed that random Gaussian frames can tolerate roughly \(15\%\) arbitrary erasures. Diakonikolas et al. \cite{diakonikolas2019robust} and the related lecture note \cite{lecturenote} used concentration inequalities together with union bounds over subsets to obtain uniform covariance bounds; however, the resulting lower bounds are informative mainly when the retained fraction is close to one. Wang \cite{wang2018random} established covariance eigenvalue bounds that hold simultaneously for all admissible subsets, although the dependence of the constants on the retained proportion is relatively implicit. Han and Xu \cite{han2017robustness} derived correct-order estimates for Gaussian matrices under arbitrary row erasures. In comparison, we provide explicit finite-sample bounds expressed through Gaussian quantiles, together with complementary tightness results, and discuss extensions under additional assumptions to sub-Gaussian and strong-mixing observations, with an application to \(K\)-subspace clustering.

\vspace{-0.1in}
\paragraph{Concentration inequalities under dependence.} Classical concentration inequalities, such as those by \cite{tropp2015introduction,vershynin2018high}, assume independence and are widely applied in random matrix theory and high-dimensional statistics. Recently, several works have extended these inequalities to settings where random vectors have dependent structures, such as martingale differences \cite{tropp2011freedman}, Markov dependence \cite{paulin2015concentration}, and mixing-type dependence \cite{merlevede2009bernstein}. In particular, \cite{park2021estimating} analyzes an inverse probability weighted estimator for high-dimensional covariance and precision matrices in the presence of general dependent missingness. \cite{han2020moment} derives moment bounds for the deviation of large autocovariance matrices from their means under weak dependence. 

\vspace{-0.1in}
\paragraph{Subspace clustering.} Subspace clustering is a central problem in unsupervised learning, which aims to cluster data points that lie in a union of low-dimensional subspaces \cite{balzano2012k,soltanolkotabi2014robust,soltanolkotabi2012geometric,vidal2011subspace,wang2022convergence}. Notably, this problem can be solved by different approaches, including sparse subspace clustering \cite{Elhamifar2013Sparse,wang2013noisy}, spectral clustering \cite{pmlr-v139-li21f,vidal2005generalized}, and low-rank matrix factorization \cite{liu2013efficient,wang2022convergence}. To analyze these subspace clustering methods, particularly the non-convex ones, a union-of-subspaces model has become a standard data model for generating data samples \cite{soltanolkotabi2012geometric,soltanolkotabi2014robust,vidal2005generalized,wang2022convergence}. 

Among the approaches to spectral clustering, a popular one is the K-subspace method \cite{bradley2000k,lipor2021subspace}, which is a generalization of the K-means method to subspace clustering. Recently, its convergence behavior has been extensively studied under a mixture of low-rank Gaussians (see \Cref{def:MoG}). This model is a probabilistic instance of the union-of-subspaces model with Gaussian latent variables; see, e.g., \cite{lipor2021subspace,wang2022convergence}. Beyond subspace clustering, mixtures of low-rank Gaussians have also served as a tractable model for studying other unsupervised learning problems, including deep feature learning \cite{xu2026linearly} and diffusion models \cite{wang2025diffusion,pmlr-v235-zhang24cn}. Nevertheless, a complete characterization of the global optimality of the objective function (see Problem \eqref{eq:loss}) remains an open problem. The main challenge is that the optimal cluster assignments are data-dependent, so standard covariance concentration bounds for fixed subsets cannot be applied directly.

\vspace{-0.1in}
\paragraph{Notation.} 
For a vector $\bm a \in \R^d$, $\|\bm a\|$ denotes its $\ell_2$ norm, $a_i$ its $i$-th entry, and $\mathrm{diag}(\bm a)$ the diagonal matrix with $\bm a$ on the diagonal. 
For a matrix $\bm A \in \R^{m\times n}$, $\|\bm A\|$ or $\sigma_{\max}(\bm A)$ denotes its spectral norm (i.e., the largest singular value), $\sigma_{\rm min}(\bm A)$ the smallest singular value, $\|\bm A\|_F$ its Frobenius norm, and $a_{ij}$ its $(i,j)$-th entry. Given a symmetric matrix $\bm X \in \R^{n\times n}$, $\lambda_{\rm max}(\bm X)$ denotes its largest eigenvalue and $\lambda_{\rm min}(\bm X)$ denotes its smallest eigenvalue. Given a positive integer $n$, we denote by $[n]$ the set $\{1,\ldots,n\}$. Given a set of integers $\{n_k\}_{k=1}^K$, let $n_{\max} = \max\{n_k: k \in [K]\}$ and $n_{\min} = \min\{n_k: k \in [K]\}$.  We use \(\mathcal O^{n\times d}:=\{\bm U\in\mathbb R^{n\times d}:\bm U^T\bm U=\bm I_d\}\) for the Stiefel manifold and \(\mathcal O(d):=\mathcal O^{d\times d}\) for the orthogonal group.

Let $\{a_n\}$ and $\{b_n\}$ be real sequences with $b_n \neq 0$ eventually.  
We write $a_n = o(b_n)$ as $n \to \infty$ if $\lim_{n \to \infty}  {a_n}/{b_n} = 0$. Given a standard Gaussian random variable $X \sim \mathcal{N}(0,1)$, we denote its cumulative distribution function by  $\Phi(x) := 1/{\sqrt{2\pi}} \int_{-\infty}^x \exp\left(- {t^2}/{2}\right) dt.$ For $u \in (0,1)$, we define $\Phi^{-1}(u) := \inf\{ x\in\mathbb{R} : \Phi(x) \ge u \}.$ We use $X_n \overset{d}{\to} X$ to denote convergence in distribution.

\section{Main Results}\label{sec:main}
In this section, we establish concentration inequalities for data-dependent subsets of Gaussian random vectors in \Cref{subsec:main}, and extend them to sub-Gaussian random vectors in \Cref{subsec:subgauss}.


\subsection{Concentration Inequalities}\label{subsec:main}

We first present a concentration inequality that bounds the minimal eigenvalue of the sample covariance matrix formed from an arbitrary subset of Gaussian random vectors, even when the subset is chosen in a data-dependent manner.
\begin{theorem}\label{thm:lowerbound}
Let $\{\bm a_i\}_{i=1}^M \subseteq \R^m$ be a set of i.i.d. standard Gaussian random vectors, i.e., $\bm a_i \overset{i.i.d.}{\sim} \mathcal{N}(\bm 0, \bm I_m)$. 
Fix  $\kappa_1\in(0,1]$ and $\eta \in (0,1/2)$, and define
\begin{align} \label{eq:gamma mu}
\gamma := \Phi^{-1}\left( \frac{1 + (1 - \eta)\kappa_1}{2} \right),
\quad
\mu := (1-\eta)\kappa_1
- \sqrt{\frac{2\gamma^2}{\pi}}\exp\left(-\frac{\gamma^2}{2}\right).
\end{align}
Then there exists an absolute constant $c_1>0$ such that with probability at least
\begin{align}\label{eq:prob}
     1 - 2 \left( \frac{22}{\eta\sqrt{\mu }} + 1 \right)^m \left( \exp\left( -\frac{\eta^2 \kappa_1 M}{2} \right)  +  \exp\left( - c_1\eta^2\mu^2 M \right) \right) - 2\exp(-2M),
\end{align}
it holds for any subset $\Omega\subseteq[M]$ satisfying $|\Omega| \ge \kappa_1 M$ that 
\begin{align}\label{eq:spec min}
\lambda_{\min}\left( \sum_{i \in \Omega} \bm a_i\bm a_i^T\right) \ge 
(1 - \eta)^2 \mu M.
\end{align}
\end{theorem}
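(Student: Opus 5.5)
The plan is to reduce the eigenvalue bound to a statement about one-dimensional projections, made uniform over the sphere by a net argument. For a unit vector $\bm u$ and any $\Omega$ with $|\Omega|\ge\kappa_1 M$, we have $\bm u^T\big(\sum_{i\in\Omega}\bm a_i\bm a_i^T\big)\bm u=\sum_{i\in\Omega}\langle\bm a_i,\bm u\rangle^2$. The worst $\Omega$ for a fixed $\bm u$ keeps the $\lceil\kappa_1M\rceil$ smallest values of $\langle\bm a_i,\bm u\rangle^2$. So it suffices to lower bound, uniformly in $\bm u$, the sum of the smallest $\kappa_1 M$ order statistics of $g_i:=\langle\bm a_i,\bm u\rangle^2$, where the $\langle\bm a_i,\bm u\rangle$ are i.i.d. $\mathcal N(0,1)$.

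\textbf{Step 1 (truncation).} Introduce $\gamma$ as defined in \eqref{eq:gamma mu}, so that $\P(|X|\le\gamma)=(1-\eta)\kappa_1$ for $X\sim\mathcal N(0,1)$. For any $\Omega$ of size at least $\kappa_1M$,
\[
\sum_{i\in\Omega}g_i\ \ge\ \sum_{i\in\Omega}\min(g_i,\gamma^2)\ \ge\ \sum_{i=1}^M\min(g_i,\gamma^2)-\gamma^2\,\#\{i\notin\Omega\}.
\]
A cleaner route is to compare with the count $N_\gamma(\bm u):=\#\{i:|\langle\bm a_i,\bm u\rangle|\le\gamma\}$. If $N_\gamma\le\kappa_1 M$, then $\Omega$ must contain at least $\kappa_1M-N_\gamma$ indices with $g_i>\gamma^2$. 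Using $\min(g,\gamma^2)$ and rearranging, this gives
\[
\sum_{i\in\Omega}g_i\ \ge\ \kappa_1M\gamma^2-\sum_{i=1}^M(\gamma^2-g_i)_+ .
\]
A direct computation gives
\[
\E(\gamma^2-X^2)_+=\gamma^2(1-\eta)\kappa_1-\E[X^2\mathbf 1_{|X|\le\gamma}],
\qquad
\E[X^2\mathbf 1_{|X|\le\gamma}]=(1-\eta)\kappa_1-\sqrt{2/\pi}\,\gamma e^{-\gamma^2/2}.
\]
This explains exactly the form of $\mu$: it equals $\E[X^2\mathbf 1_{|X|\le\gamma}]$. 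Hence in expectation the bound is at least $\eta\kappa_1\gamma^2M+\mu M\ge\mu M$.

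\textbf{Step 2 (concentration for fixed $\bm u$).} There are two events. The first is that the count $N_\gamma(\bm u)$ of Bernoulli variables with mean $(1-\eta)\kappa_1$ does not exceed $\kappa_1M$. By a Chernoff/Hoeffding bound this fails with probability at most $\exp(-\eta^2\kappa_1M/2)$, which is the first exponential in \eqref{eq:prob}. The second is that $\sum_i X_i^2\mathbf 1_{|X_i|\le\gamma}$, a sum of bounded (hence sub-Gaussian) variables, is at least $(1-\eta)\mu M$. By Hoeffding/Bernstein this fails with probability at most $\exp(-c_1\eta^2\mu^2M)$ (after handling the $\gamma^2$ scaling). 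On both events, $\sum_{i\in\Omega}g_i\ge(1-\eta)\mu M$ for every admissible $\Omega$.

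\textbf{Step 3 (net and union bound).} Take an $\epsilon$-net $\mathcal N$ of the sphere of size at most $(1+2/\epsilon)^m$ and apply Step 2 on it. To pass to arbitrary $\bm u$, write $\bm u=\bm v+\bm w$ with $\bm v\in\mathcal N$ and $\|\bm w\|\le\epsilon$. Setting $\bm A_\Omega$ as the matrix with rows $\bm a_i$, $i\in\Omega$, we get
\[
\|\bm A_\Omega\bm u\|\ \ge\ \|\bm A_\Omega\bm v\|-\epsilon\|\bm A\|,
\]
where $\|\bm A\|\le\sqrt M+\sqrt m+\sqrt{2M}\lesssim\sqrt M$ holds with probability $1-2\exp(-2M)$ by \Cref{lem:cov esti}; this gives the final term of \eqref{eq:prob}. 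Choosing $\epsilon\asymp\eta\sqrt{\mu}$ (roughly $\eta\sqrt\mu/11$) yields
\[
\|\bm A_\Omega\bm u\|\ \ge\ \sqrt{(1-\eta)\mu M}-\eta\sqrt{\mu M}/c\ \ge\ (1-\eta)\sqrt{\mu M},
\]
and squaring gives \eqref{eq:spec min}. The union bound over $\mathcal N$ produces the factor $(22/(\eta\sqrt\mu)+1)^m$, with the leading 2 absorbing two-sided or two-event counting.

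\textbf{Main obstacle.} The delicate step is Step 1. Turning the worst-case subset into a quantity that is a sum of i.i.d. terms, with explicit constants, needs the truncation level chosen so that at most $\kappa_1 M$ points fall inside $[-\gamma,\gamma]$, and the exact integral that produces $\mu$. Matching the constants in $\mu$, $22$ and $c_1$ will require some care with the slack factors $(1-\eta)$.
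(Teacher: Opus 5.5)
Your proposal is correct and follows the paper's proof essentially step for step. The paper also uses a Bernstein bound to get $|\{i:(\bm a_i^T\bm x)^2\le\gamma^2\}|\le\kappa_1M$ at each net point, so every admissible $\Omega$ dominates the truncated sum, as in your Step~1. It then concentrates $\sum_i(\bm a_i^T\bm x)^2\mathbb{I}\{(\bm a_i^T\bm x)^2\le\gamma^2\}$ and closes with $\|\bm A\|^2\le 19M$ and $\varepsilon=\eta\sqrt{\mu}/11$. On your ``$\gamma^2$ scaling'' caveat: Hoeffding's factor $\gamma^{-4}$ does not give an absolute $c_1$, because $\gamma$ is unbounded over the allowed $(\kappa_1,\eta)$ (e.g.\ $\kappa_1=1$, $\eta\to0$), so the paper uses the sub-exponential Bernstein inequality with $\|X^2\|_{\psi_1}$ instead. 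As an aside, your Step~1 inequality $\sum_{i\in\Omega}g_i\ge\kappa_1M\gamma^2-\sum_{i=1}^M(\gamma^2-g_i)_+$ holds even without the count event, and applying Bernstein directly to $\sum_i(\gamma^2-g_i)_+$ (summands in $[0,\gamma^2]$, variance at most $\gamma^4\kappa_1$, required deviation at least $\eta\kappa_1\gamma^2M$) gives a per-direction failure probability $\exp(-c\,\eta^2\kappa_1M)$, which would remove the $\exp(-c_1\eta^2\mu^2M)$ term entirely.
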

We defer the proof to \Cref{sec:proof-lowerbound}. Now, let us make some remarks on this concentration inequality.  
\begin{itemize}[leftmargin=*]
\item[$\bullet$] Even though $\{\bm a_i\}_{i=1}^M \subseteq \R^m$ are i.i.d. standard Gaussian random vectors, we should point out that $\Omega \subseteq [M]$ may depend on $\{\bm a_i\}_{i=1}^M$. For example, we define 
\begin{align*}
    \Omega = \left\{ i \in [M]:\|\bm a_i\| \ge \|\bm a\|_{(\kappa_1M)} \right\},
\end{align*}
where $\|\bm a\|_{(k)}$ denotes the $k$-th largest value among $\{\|\bm a_j\|\}_{j=1}^M$. Consequently, $\bm a_i$ for all $i\in \Omega$ depend on both itself and all the other vectors.   



\item[$\bullet$] It follows from \eqref{eq:gamma mu} and \eqref{eq6:lem cov A1 1} in \Cref{sec:proof-lowerbound} that 
\begin{align*}
    \mu = \sqrt{\frac{2}{\pi}}\int_0^{\gamma} x^2 \exp\left( -\frac{x^2}{2} \right) dx.  
\end{align*}
This, together with the fact that $\gamma$ defined in \eqref{eq:gamma mu} is monotone increasing w.r.t. $\kappa_1$, yields that the lower bound in \eqref{eq:spec min} increases as $\kappa_1$ increases.

\item[$\bullet$] We do not state \(M\ge m\) or \(\kappa_1M\ge m\) as separate assumptions, since these rank-feasibility requirements are already reflected by the probability lower bound: whenever \eqref{eq:prob} is non-vacuous, the first exponential term forces \(\kappa_1M\) to be at least of order \(m\). Beyond this necessary regime, the probability bound becomes numerically meaningful only when \(M/m\) is sufficiently large relative to \(\kappa_1\). As shown in \Cref{app:kappa}, for fixed \(M/m\), the smallest practically admissible \(\kappa_1\) scales roughly as
\[
    \kappa_1 \gtrsim
    \left(\log(M/m) /(M/m)\right)^{1/6}.
\]
The following example illustrates this dependence numerically and shows how larger sampling ratios \(M/m\) allow smaller admissible choices of \(\kappa_1\) while keeping the probability bound strong.

\end{itemize}

\begin{exam}\label{exam:parameter}
According to \Cref{thm:lowerbound}, the ratio $M / m$ is the key point to guarantee the result holds with high probability. 
\begin{itemize}[leftmargin=*]
\item If \(M=10^3m\), choosing \(\kappa_1\geq 0.7\) and \(\eta\geq 0.4\) gives \(\sqrt{\mu}\approx 0.20\), and the event in \eqref{eq:spec min} holds with probability at least \(0.999\).
\item If \(M=10^4m\), choosing \(\kappa_1\geq 0.3\) and \(\eta\geq 0.26\) gives \(\sqrt{\mu}\approx 0.08\), with probability at least \(0.999\).
\item If \(M=10^6m\), choosing \(\kappa_1\geq 0.05\), \(\eta\geq 0.2\) gives \(\sqrt{\mu}\approx 0.006\), with probability at least \(0.999\).
\end{itemize}
\end{exam}

\begin{figure*}[t]
    \centering
    \begin{subfigure}[b]{0.48\textwidth}
    \IfFileExists{fig/Mm1e4.pdf}{\includegraphics[width=\textwidth]{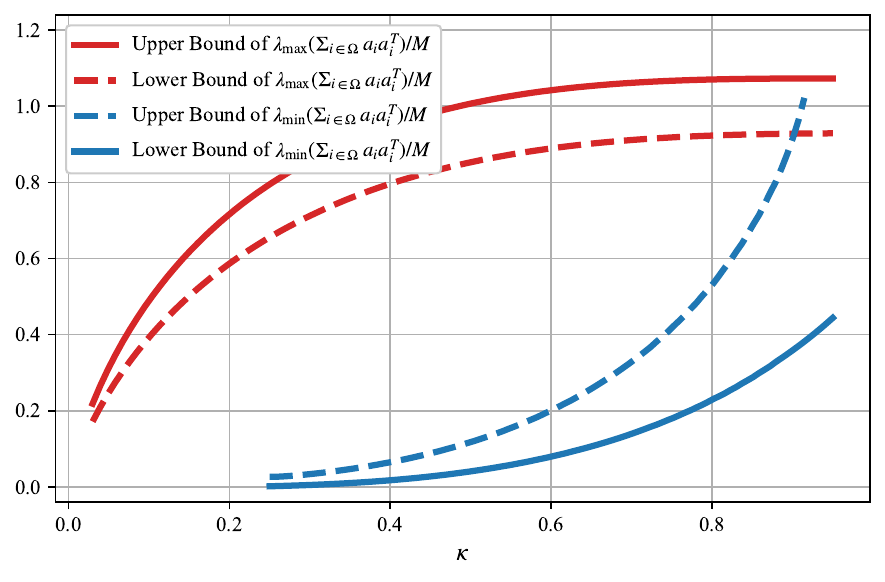}}{\fbox{\parbox[c][1.5in][c]{0.95\textwidth}{\centering\revise{Missing figure file: fig/Mm1e4.pdf}}}}\vspace{-0.1in} 
        \label{fig:image1}
    \end{subfigure}
    \hfill
    \begin{subfigure}[b]{0.48\textwidth}
        \IfFileExists{fig/Mm1e5.pdf}{\includegraphics[width=\textwidth]{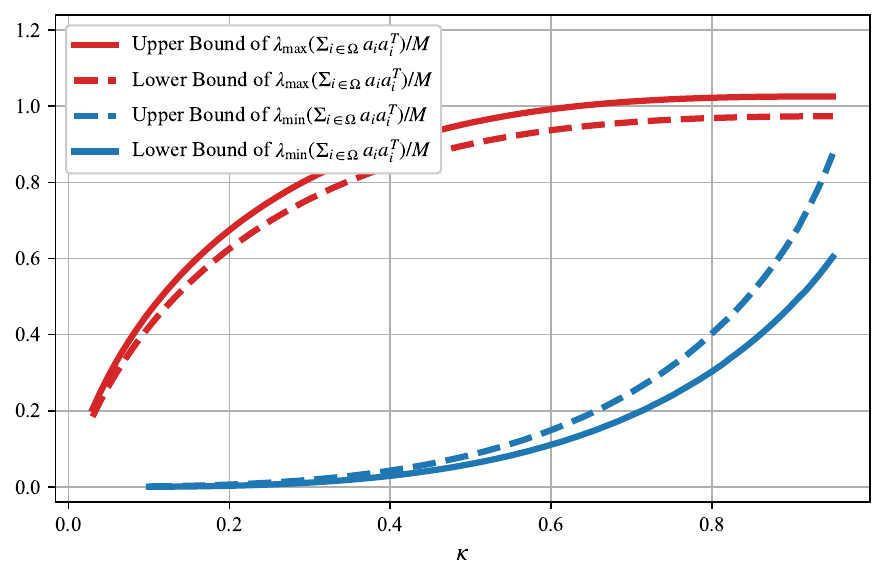}}{\fbox{\parbox[c][1.5in][c]{0.95\textwidth}{\centering\revise{Missing figure file: fig/Mm1e5.pdf}}}}\vspace{-0.1in}
        \label{fig:image2}
    \end{subfigure}\vspace{-0.15in}
\caption{Lower and upper bounds on $\lambda_{\min}\left( \sum_{i \in \Omega} \bm a_i\bm a_i^T\right)$ (see \eqref{eq:spec min} and \eqref{eq:spec min upper}) and $\lambda_{\max}\left( \sum_{i \in \Omega} \bm a_i\bm a_i^T\right)$ (see \eqref{eq:spec max} and \eqref{eq:spec max lower}) as functions of $\kappa$ (corresponding to $\kappa_1$ and $\kappa_2$, respectively).
The parameter $\eta$ is chosen such that all probabilities in \eqref{eq:prob}, \eqref{eq:prob 4}, \eqref{eq:prob 3}, and \eqref{eq:prop 2} are no less than $0.999$.
Results are shown for different ratios $M/m$ (\textbf{Left}: $M/m = 10^4$; \textbf{Right}: $M/m = 10^5$).}\label{fig:1}\vspace{-0.1in}
\end{figure*}

\vspace{-0.1cm}
\paragraph{Weakness of Simple Union Bound.} One may wonder whether standard concentration inequalities such as \Cref{lem:cov esti}, combined with a union bound, are sufficient to establish a bound of the form \eqref{eq:prob}. While this approach is indeed possible, as shown in \cite{lecturenote}, the resulting lower bound is meaningful only when the retained fraction is close to one. In particular, the union-bound argument imposes a stringent requirement on the parameter $\kappa_1$, namely $\kappa_1 > 0.78$, below which the resulting bound becomes vacuous in the asymptotic regime.

Consider the case \(m=1\) and a fixed subset 
\(\Omega\) with \(|\Omega|=\kappa_1M\). Define $Z_\Omega
:=
\frac{1}{\sqrt{2\kappa_1M}}
\sum_{i\in\Omega}(a_i^2-1).$
By the central limit theorem and Mills' ratio, for any fixed 
\(\eta\in(0,1/2]\) and all sufficiently large \(M\),
\[
\mathbb P\big(|Z_\Omega|\le \eta\big)
\le
1-c\,\frac{\eta}{1+\eta^2}\exp\left(-\frac{\eta^2}{2}\right),
\]
where \(c>0\) is an absolute constant. To make the fixed-subset covariance 
lower bound non-vacuous, one must take 
\(\eta\le \sqrt{\kappa_1M/2}\).
Taking a union bound over all \(C_{M}^{\kappa_1 M} \) subsets and using 
Stirling's approximation gives a resulting probability bound of at most
\[
1-\frac{c}{M}
\exp\left\{
M\left(
\kappa_1\log\frac{1}{\kappa_1}
+(1-\kappa_1)\log\frac{1}{1-\kappa_1}
-\frac{\kappa_1}{4}
\right)\right\}.
\]
Hence, for this bound to be non-vacuous as \(M\to\infty\), it is necessary $\kappa_1>e^{-1/4}\approx 0.78 .$ See \Cref{app:union} for more details. In contrast, our bound allows much smaller \(\kappa_1\), subject to the non-vacuous-probability regime described in \Cref{app:kappa}, making it much more flexible.

We next derive an upper bound on the maximum eigenvalue of $\sum_{i \in \Omega} \bm a_i\bm a_i^T$, as stated in the following theorem. The proof is deferred to \Cref{sec:proof-upperbound}.
\begin{theorem}\label{thm:upperbound}
    Let $\{\bm a_i\}_{i=1}^M \subseteq \R^m$ be a set of i.i.d. standard Gaussian random vectors, i.e., $\bm a_i \overset{i.i.d.}{\sim} \mathcal{N}(\bm 0, \bm I_m)$. Fix $\kappa_2\in(0,1]$ and $\eta\in(0,1/2)$ such that
$(1+\eta)\kappa_2\le 1$, and define
\begin{align*}
\gamma := \Phi^{-1}\left( 1 - \frac{(1+\eta)\kappa_2}{2} \right),
\quad
\mu := (1+\eta)\kappa_2
+ \sqrt{\frac{2}{\pi}}\,\gamma\exp\left(-\frac{\gamma^2}{2}\right).
\end{align*} 
Then there exists an absolute constant $c_2>0$ such that with probability at least 
\begin{align}\label{eq:prob 4}
    1 - 2\left( \frac{6}{\eta} + 1 \right)^m\left( \exp \left(-\frac{\eta^2\kappa_2 M}{6}\right)  + \exp\left(- c_2 \eta^2\mu^2 M \right) \right), 
\end{align}
it holds for every subset $\Omega \subseteq [M]$ satisfying $|\Omega| \leq \kappa_2 M$ with \(\kappa_2\in(0,1]\) that  
\begin{align}\label{eq:spec max}  \lambda_{\max}\left(\sum_{i \in \Omega} \bm a_i\bm a_i^T\right) \le (1 + \eta)^2\mu M. 
\end{align} 
\end{theorem}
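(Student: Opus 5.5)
We reduce the statement to a uniform bound over unit directions and then discretize. For a fixed unit vector $\bm x\in\R^m$, the scalars $g_i:=\bm a_i^T\bm x$ are i.i.d. $\mathcal N(0,1)$. For every $\Omega$ with $|\Omega|\le\kappa_2M$,
\[
\bm x^T\Big(\sum_{i\in\Omega}\bm a_i\bm a_i^T\Big)\bm x=\sum_{i\in\Omega}g_i^2\le \sum_{i=1}^M g_i^2\,\mathbf 1\{|g_i|\ge t\}+ t^2\,\#\{i:|g_i|\ge t\}\wedge \ldots .
\]
More simply, the sum over the worst subset equals the sum of the $\lfloor\kappa_2M\rfloor$ largest $g_i^2$. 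This is bounded by $\sum_{i}g_i^2\mathbf 1\{|g_i|\ge \gamma\}$ on the event that $N_\gamma:=\#\{i:|g_i|\ge\gamma\}\ge\kappa_2M$. Indeed, the top $\kappa_2M$ values then all exceed $\gamma^2$ and are contained in the thresholded sum. The threshold $\gamma=\Phi^{-1}(1-(1+\eta)\kappa_2/2)$ is chosen so that $\P(|g|\ge\gamma)=(1+\eta)\kappa_2$, giving $\E N_\gamma=(1+\eta)\kappa_2M$. A lower-tail Chernoff bound for binomials then gives $N_\gamma\ge\kappa_2M$ except with probability $\exp(-\eta^2\kappa_2M/6)$ (up to the $(1+\eta)$ factor). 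This is where the first exponential term comes from.

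Next we compute $\E[g^2\mathbf 1\{|g|\ge\gamma\}]$. Integrating by parts, $\int_\gamma^\infty x^2\phi(x)\,dx=\gamma\phi(\gamma)+(1-\Phi(\gamma))$, so
\[
\E[g^2\mathbf 1\{|g|\ge\gamma\}]=(1+\eta)\kappa_2+\sqrt{2/\pi}\,\gamma e^{-\gamma^2/2}=\mu .
\]
The variables $g_i^2\mathbf 1\{|g_i|\ge\gamma\}$ are i.i.d. sub-exponential with $\psi_1$-norm bounded by an absolute constant. Bernstein's inequality therefore gives $\sum_i g_i^2\mathbf 1\{|g_i|\ge\gamma\}\le(1+\eta)\mu M$ with failure probability at most $\exp(-c\min(\eta^2\mu^2,\eta\mu)M)=\exp(-c_2\eta^2\mu^2M)$, using $\eta\mu\le 1$ up to constants. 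This is the second exponential term. At this point, for a fixed $\bm x$, $\max_{|\Omega|\le\kappa_2M}\bm x^T(\cdot)\bm x\le(1+\eta)\mu M$ holds with the stated probability.

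To pass to all directions, we take an $\epsilon$-net $\mathcal N$ of the sphere with $\epsilon=\eta/3$, of size at most $(6/\eta+1)^m$ (using $(2/\epsilon+1)^m$), and union bound over it. For any fixed $\Omega$, write $\bm A_\Omega=\sum_{i\in\Omega}\bm a_i\bm a_i^T\succeq0$. The standard net argument for PSD matrices gives $\lambda_{\max}(\bm A_\Omega)\le(1-2\epsilon)^{-1}\max_{\bm x\in\mathcal N}\bm x^T\bm A_\Omega\bm x$. A cleaner alternative is to use $\|\bm A_\Omega^{1/2}\bm x\|$ and the bound $\|\bm B\|\le(1-\epsilon)^{-1}\max_{\mathcal N}\|\bm B\bm x\|$ with $\bm B$ having rows $\bm a_i^T$, $i\in\Omega$, which yields a factor $(1-\eta/3)^{-2}\le 1+\eta$ for $\eta<1/2$. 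Since the net bound holds simultaneously for every $\Omega$, the final bound is $(1+\eta)^2\mu M$. The factor of $2$ in front covers both tail events.

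The main obstacle is the reduction from "arbitrary data-dependent $\Omega$" to a single fixed-direction statistic. This is handled by the order-statistic/thresholding observation above, so the uniformity over $\Omega$ costs nothing beyond the net. The care needed there is the Chernoff step relating $N_\gamma$ to $\kappa_2M$, together with checking the sub-exponential constants so that the Bernstein exponent takes the form $c_2\eta^2\mu^2M$.
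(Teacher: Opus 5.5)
Your proposal is correct and follows essentially the same route as the paper. You threshold at $\gamma$, so that on the event $\#\{i:|\bm a_i^T\bm x|\ge\gamma\}\ge\kappa_2M$ every $\sum_{i\in\Omega}(\bm a_i^T\bm x)^2$ with $|\Omega|\le\kappa_2M$ is dominated by $\sum_{i}(\bm a_i^T\bm x)^2\mathbf 1\{|\bm a_i^T\bm x|\ge\gamma\}$; you concentrate the count and this truncated sum for each fixed direction; and you finish with an $\eta/3$-net, the factor $(1-2\varepsilon)^{-1}\le 1+\eta$, and a union bound. The only differences are cosmetic: a multiplicative Chernoff bound replaces Bernstein for the count (and gives the even better exponent $\eta^2\kappa_2M/(2(1+\eta))$), and you offer an optional $(1-\eta/3)^{-2}$ net variant.
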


\paragraph{Tightness of lower and upper bounds.} A natural question is the tightness of our bounds in \Cref{thm:lowerbound} and \Cref{thm:upperbound}.  To address this concern, we present an upper bound for $\lambda_{\min}\left( \sum_{i \in \Omega} \bm a_i\bm a_i^T\right)$ and a lower bound for $\lambda_{\max}\left( \sum_{i \in \Omega} \bm a_i\bm a_i^T\right)$ as follows:
\begin{corollary}\label{coro:lowerboundtightness}
 Let $\{\bm a_i\}_{i=1}^M \subseteq \R^m$ be a set of i.i.d. standard Gaussian random vectors.\\
 (i) With probability at least 
\begin{align}\label{eq:prob 3}
     1- 2\exp\left( - \eta^2\kappa_1 M / 4 \right) - 2\exp\left( - { c_1\eta^2 \widetilde{\mu}^2 M} \right),
\end{align}
there exists some $\Omega \subseteq [M]$ satisfying $|\Omega| \ge \kappa_1 M$ such that 
\begin{align}\label{eq:spec min upper}
 \lambda_{\min}\left( \sum_{i \in \Omega} \bm a_i\bm a_i^T\right) \le 
(1 + \eta) \widetilde{\mu} M,
\end{align}
where $\kappa_1 \in (0,1]$, $ c_1 > 0$, and $\eta \in (0, 1/2)$ are defined in \Cref{thm:lowerbound}, and 
\begin{align*} 
 \widetilde{\gamma} := \Phi^{-1}\left( \frac{1 + (1 + \eta)\kappa_1}{2} \right), \quad
 \widetilde{\mu} \coloneqq (1 +\eta)\kappa_1 - \sqrt{\frac{2\widetilde{\gamma}^2}{\pi}}\exp\left( -\frac{\widetilde{\gamma}^2}{2} \right). 
\end{align*}
(ii) With probability at least  
\begin{align}\label{eq:prop 2}
    1 - 2\exp \left(-\frac{\eta^2\kappa_2 M}{2}\right)  - 2\exp\left(- c_2 \cdot\eta^2\widetilde{\mu}^2 M \right), 
\end{align}
there exists some $\Omega \subseteq [M]$ satisfying $|\Omega| \leq \kappa_2 M$ such that  
\begin{align}\label{eq:spec max lower}
\lambda_{\max}\left( \sum_{i \in \Omega} \bm a_i\bm a_i^T\right) \ge (1 - \eta) \widetilde{\mu} M, 
\end{align} 
where \(\kappa_2\in(0,1]\), $c_2 > 0$, and $\eta \in (0, 1/2)$ are defined in \Cref{thm:upperbound}, and  
\begin{align*}
\widetilde{\gamma}  := \Phi^{-1}\left( 1 - \frac{(1-\eta)\kappa_2}{2} \right),\quad 
      \widetilde{\mu}  := (1-\eta)\kappa_2 + \sqrt{\frac{2\widetilde{\gamma}^2}{\pi}}\exp\left( -\frac{\widetilde{\gamma}^2}{2} \right). 
\end{align*}
\end{corollary}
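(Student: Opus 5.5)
Both parts will be proved by exhibiting an explicit data-dependent subset and controlling a single quadratic form. Since this is an existence statement, no net or union bound over directions is needed; one fixed unit vector suffices. Fix $\bm u=\bm e_1$ and write $x_i:=\bm e_1^T\bm a_i\sim\mathcal N(0,1)$, i.i.d. We use
\[
\lambda_{\min}\Big(\sum_{i\in\Omega}\bm a_i\bm a_i^T\Big)\le \sum_{i\in\Omega}x_i^2,
\qquad
\lambda_{\max}\Big(\sum_{i\in\Omega}\bm a_i\bm a_i^T\Big)\ge \sum_{i\in\Omega}x_i^2 .
\]
The problem therefore reduces to a scalar one: choose $\Omega$ by thresholding $|x_i|$.

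\textbf{Part (i).} Let $\Omega:=\{i:|x_i|\le\widetilde\gamma\}$. By the definition of $\widetilde\gamma$, $p:=\P(|x_i|\le\widetilde\gamma)=2\Phi(\widetilde\gamma)-1=(1+\eta)\kappa_1$ (if $(1+\eta)\kappa_1>1$, take $\Omega=[M]$ instead).

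\emph{Cardinality.} $|\Omega|$ is Binomial$(M,p)$. A multiplicative Chernoff bound gives $|\Omega|\ge (1-\eta/2)\,p\,M\ge \kappa_1M$ with probability at least $1-\exp(-\eta^2\kappa_1M/4)$, up to adjusting constants, using $(1-\eta/2)(1+\eta)\ge1$ for $\eta<1/2$.

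\emph{Sum of squares.} The summands $y_i:=x_i^2\mathbb 1\{|x_i|\le\widetilde\gamma\}$ are i.i.d. and bounded. The identity \eqref{eq6:lem cov A1 1}, i.e.\ integration by parts, gives
\[
\E y_i=\sqrt{2/\pi}\int_0^{\widetilde\gamma}x^2e^{-x^2/2}\,dx
= p-\sqrt{2\widetilde\gamma^2/\pi}\,e^{-\widetilde\gamma^2/2}=\widetilde\mu .
\]
Because $y_i$ is sub-exponential with $\|y_i\|_{\psi_1}\lesssim1$, Bernstein's inequality yields $\sum_iy_i\le(1+\eta)\widetilde\mu M$ except with probability $2\exp(-c_1\eta^2\widetilde\mu^2M)$. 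This is the same estimate as the one used for the fixed-direction step in the proof of \Cref{thm:lowerbound}, and that is why $c_1$ reappears here.

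A union bound over the two failure events gives \eqref{eq:prob 3}.

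\textbf{Part (ii).} Let $\Omega:=\{i:|x_i|\ge\widetilde\gamma\}$, so that $p=2(1-\Phi(\widetilde\gamma))=(1-\eta)\kappa_2$.

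\emph{Cardinality.} Chernoff gives $|\Omega|\le(1+\eta)p M\le\kappa_2M$, using $(1+\eta)(1-\eta)\le1$, with probability at least $1-\exp(-\eta^2\kappa_2M/2)$, up to constants.

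\emph{Sum of squares.} Integration by parts on the tail gives
\[
\E\big[x^2\mathbb 1\{|x|\ge\widetilde\gamma\}\big]=p+\sqrt{2/\pi}\,\widetilde\gamma\, e^{-\widetilde\gamma^2/2}=\widetilde\mu .
\]
Bernstein's inequality for these sub-exponential variables then gives $\sum_iy_i\ge(1-\eta)\widetilde\mu M$ with probability at least $1-2\exp(-c_2\eta^2\widetilde\mu^2M)$, as in the proof of \Cref{thm:upperbound}.

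\textbf{Main obstacle.} The only delicate point is matching constants: the Chernoff exponents must come out exactly as $\eta^2\kappa_1M/4$ and $\eta^2\kappa_2M/2$, and the Bernstein constants must be the same $c_1,c_2$ as in the theorems. I would take the multiplicative Chernoff forms $\exp(-\delta^2pM/2)$ for the lower tail and $\exp(-\delta^2pM/3)$ for the upper tail, with $\delta$ of order $\eta$. If the exponents come out with slightly different numerical factors, those factors can be absorbed, since $p$ is within a factor $(1\pm\eta)$ of $\kappa$. The Bernstein step should reuse the scalar concentration lemma already invoked in those proofs, with deviation $\eta\widetilde\mu M$ in place of $\eta\mu M$.
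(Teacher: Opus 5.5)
Your proposal is essentially the paper's proof. Fix one unit vector, take $\Omega$ to be the indices with $|\bm a_i^T\bm x_0|\le\widetilde\gamma$ (resp.\ $\ge\widetilde\gamma$), control $|\Omega|$ by a Bernoulli concentration bound, and control the truncated sum of squares with the same sub-exponential Bernstein step as in the theorems, which is why $c_1,c_2$ carry over. Your remark about the case $(1+\eta)\kappa_1>1$ also covers a degenerate case the paper ignores.

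The one repair is in the counting step. With $\delta=\eta/2$ in (i), and with $\delta=\eta$ plus the $\delta^2/3$ Chernoff form in (ii), your exponents are $\eta^2(1+\eta)\kappa_1M/8$ and $\eta^2(1-\eta)\kappa_2M/3$. Both fall strictly below the explicit $\eta^2\kappa_1M/4$ and $\eta^2\kappa_2M/2$, and explicit constants cannot be ``absorbed.'' Instead, use the full slack $\eta\kappa M$ so that the threshold lands exactly at $\kappa M$, and apply Bernstein's inequality with variance at most $(1\pm\eta)\kappa M$, as the paper does. This gives exponents $\eta^2\kappa_1M/(2+8\eta/3)\ge\eta^2\kappa_1M/4$ and $\eta^2\kappa_2M/(2-4\eta/3)\ge\eta^2\kappa_2M/2$.
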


\begin{figure*}[t]
    \centering
    \begin{subfigure}[b]{0.48\textwidth}
    \IfFileExists{fig/kappa04.pdf}{\includegraphics[width=\textwidth]{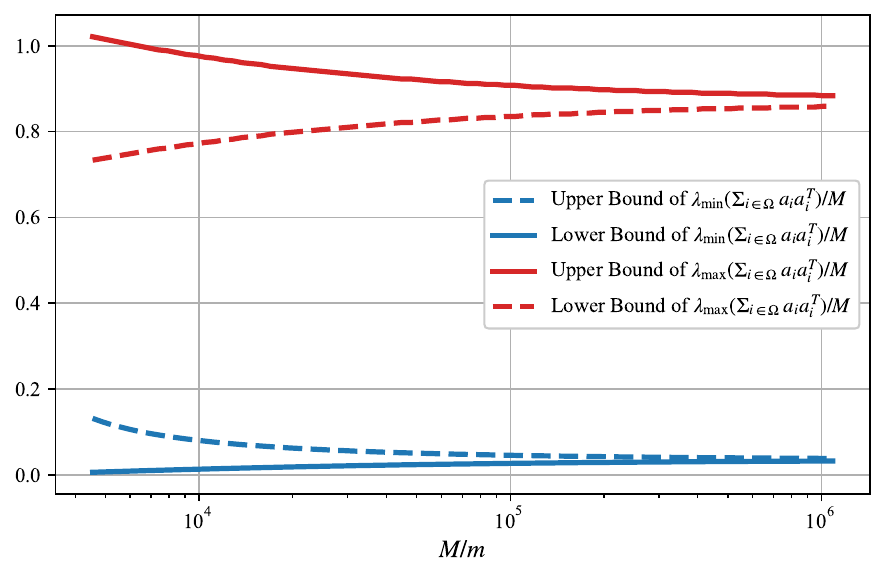}}{\fbox{\parbox[c][1.5in][c]{0.95\textwidth}{\centering\revise{Missing figure file: fig/kappa04.pdf}}}}\vspace{-0.1in} 
    \end{subfigure}
    \hfill
    \begin{subfigure}[b]{0.48\textwidth}
        \IfFileExists{fig/kappa06.pdf}{\includegraphics[width=\textwidth]{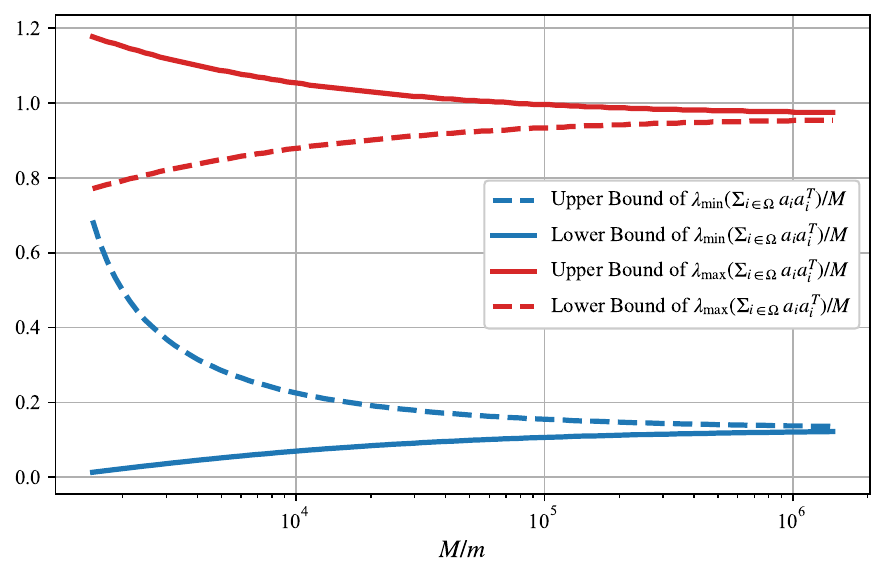}}{\fbox{\parbox[c][1.5in][c]{0.95\textwidth}{\centering\revise{Missing figure file: fig/kappa06.pdf}}}}\vspace{-0.1in}
    \end{subfigure}
\caption{Lower and upper bounds on $\lambda_{\min}\left( \sum_{i \in \Omega} \bm a_i\bm a_i^T\right)$ (see \eqref{eq:spec min} and \eqref{eq:spec min upper}) and $\lambda_{\max}\left( \sum_{i \in \Omega} \bm a_i\bm a_i^T\right)$ (see \eqref{eq:spec max} and \eqref{eq:spec max lower}) as functions of $M/m$.
The parameter $\eta$ is chosen such that all probabilities in \eqref{eq:prob}, \eqref{eq:prob 4}, \eqref{eq:prob 3}, and \eqref{eq:prop 2} are no less than $0.999$.
Results are shown for different $\kappa$ (Left: $\kappa = 0.4$; Right: $\kappa = 0.6$).}\vspace{-0.1in}
    \label{fig:2}
\end{figure*}

The proof is deferred to \Cref{app subsec:coro}. To further illustrate the tightness of the bounds established in \Cref{thm:lowerbound} and \Cref{thm:upperbound}, we plot the corresponding lower and upper bounds of $\lambda_{\min}\left( \sum_{i \in \Omega} \bm a_i \bm a_i^T \right)$ given in \eqref{eq:spec min} and \eqref{eq:spec min upper}, as well as those of
$\lambda_{\max}\left( \sum_{i \in \Omega} \bm a_i \bm a_i^T \right)$ given in \eqref{eq:spec max} and \eqref{eq:spec max lower}, in \Cref{fig:1} and \ref{fig:2}. Specifically, we first fix the ratio $M/m = 10^4, 10^5$ and examine how the bounds vary with respect to $\kappa$ in \Cref{fig:1}, and then fix $\kappa = 0.4, 0.6$ to investigate their dependence on $M/m$ in \Cref{fig:2}.
In all cases, the parameter $\eta$ is chosen to ensure that the probabilities in \eqref{eq:prob}, \eqref{eq:prob 4}, \eqref{eq:prob 3}, and \eqref{eq:prop 2} are no less than $0.999$. It is observed from these figures that the theoretical upper and lower bounds closely track the corresponding spectral quantities across a wide range of $\kappa$ and $M/m$. This indicates that the bounds derived in \Cref{thm:lowerbound} and \Cref{thm:upperbound} are not only valid but also quantitatively tight.


\subsection{Extension to Sub-Gaussian Random Vectors}\label{subsec:subgauss}

In this subsection, we extend the concentration inequalities in \Cref{subsec:main} from Gaussian random vectors to sub-Gaussian random vectors \citep[Definition~2.5.6]{vershynin2018high}. The proof is deferred to \Cref{sec:proof-lowerbound-general}.

\begin{theorem}\label{thm:lowerbound:general}
Let $\{\bm a_i\}_{i=1}^M \subseteq \R^m$ be a set of i.i.d. sub-Gaussian random vectors with $M \geq m$, and $P_{x}(\cdot)$ denotes the cumulative distribution function (CDF) of $\bm{a}_i^T \bm x$ for any $\bm x \in \mathbb{S}^{m-1}$. 
Define 
\begin{align} 
\Psi_1(z) := & \max_{x \in \mathbb{S}^{m-1}} \left\{ P_x(z) - P_x(-z) \right\}, \quad \forall z \geq 0, \label{eq:Psi1}\\
\Psi_2(z) := & \min_{x \in \mathbb{S}^{m-1}} \left\{ P_x(z) - P_x(-z) \right\}, \quad \forall z \geq 0. \label{eq:Psi2}
\end{align}
(i) Fix $\kappa_3\in(0,1]$ and $\eta\in(0,1/2)$. Let $\gamma>0$ satisfy $\Psi_1(\gamma)\le (1-\eta)\kappa_3,$ and define
\begin{align}\label{eq:gamma sub 1}
\mu
:= \min_{\bm x \in \mathbb{S}^{m-1}}
\mathbb{E}_{z \sim P_x}
\left[z^2 \bm{1}\{z^2 \leq \gamma^2\}\right].
\end{align}
There exists a constant $c_3 > 0$ such that with probability at least 
\begin{align*}
    & 1 -  2 \left( \frac{16c_3}{\eta\sqrt{\mu } } + 1 \right)^m \left( \exp\left( -\frac{\eta^2\kappa_3 M}{2} \right)  + \exp\left( - \frac{\eta^2\mu^2 M}{2\gamma^4} \right) \right) - 2\exp(-M),
\end{align*}
it holds for every subset $\Omega \subseteq [M]$ satisfying $|\Omega| \ge \kappa_3 M$ that 
\begin{align}\label{eq:spec min sub}
\lambda_{\min}\left( \sum_{i \in \Omega} \bm a_i\bm a_i^T\right) \ge 
(1 - \eta)^2 \mu M.
\end{align}
(ii) Fix $\kappa_4\in(0,1]$ and $\eta\in(0,1/2)$. Let $\gamma>0$ satisfy $\Psi_2(\gamma)\ge (1+\eta)\kappa_4,$ and define
\begin{align}\label{eq:gamma sub 2}
\mu
:= \max_{\bm x \in \mathbb{S}^{m-1}}
\mathbb{E}_{z \sim P_x}
\left[z^2 \bm{1}\{z^2 \geq \gamma^2\}\right].
\end{align} 
Then there exists a constant $c_4>0$ such that with probability at least 
\begin{align*}
    1 - 2\left( \frac{6}{\eta} + 1 \right)^m\left( \exp \left(-\frac{\eta^2\kappa_4 M}{6}\right)  + \exp\left(- c_4 \eta^2\mu^2 M \right) \right),
\end{align*}
it holds for every subset $\Omega \subseteq [M]$ satisfying $|\Omega| \le \kappa_4 M$ with $\kappa_4 \in [0,1]$ that 
\begin{align}\label{eq:spec max sub}
\left\| \sum_{i \in \Omega} \bm a_i\bm a_i^T\right\| \le 
(1 + \eta)^2 \mu M.
\end{align}
\end{theorem}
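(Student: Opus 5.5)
The plan is to follow the same three-step template as for \Cref{thm:lowerbound,thm:upperbound}: a variational reduction for a fixed direction, concentration for that direction, and an $\varepsilon$-net with a perturbation argument to make the bound uniform over the sphere. The new ingredient is that the Gaussian quantile $\Phi^{-1}$ is replaced by $\Psi_1,\Psi_2$, and the explicit truncated second moment by \eqref{eq:gamma sub 1}, \eqref{eq:gamma sub 2}.

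\textbf{Step 1 (reduction, part (i)).} For a fixed $\bm x\in\mathbb S^{m-1}$, every $\Omega$ with $|\Omega|\ge\kappa_3M$ satisfies
\[
\sum_{i\in\Omega}(\bm a_i^T\bm x)^2 \ge \sum_{i\in\Omega}(\bm a_i^T\bm x)^2\bm 1\{|\bm a_i^T\bm x|\le\gamma\}.
\]
This equals $\sum_{i\in[M]}(\bm a_i^T\bm x)^2\bm 1\{|\bm a_i^T\bm x|\le\gamma\}$ minus the contribution of the indices outside $\Omega$ that fall in the truncation window. That contribution is at most $\gamma^2\cdot\#\{i\notin\Omega:|\bm a_i^T\bm x|\le\gamma\}$. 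The cleaner route is the one presumably used in the Gaussian proof: the adversary's worst choice of $\Omega$ keeps the $\kappa_3M$ smallest values of $(\bm a_i^T\bm x)^2$. So it suffices to show two things. First, the number $N_x:=\#\{i:|\bm a_i^T\bm x|\le\gamma\}$ is at most $\kappa_3M$. This follows by Hoeffding, since $\mathbb E N_x=M(P_x(\gamma)-P_x(-\gamma))\le\Psi_1(\gamma)M\le(1-\eta)\kappa_3M$, which gives the $\exp(-\eta^2\kappa_3M/2)$-type term. Second, on this event the smallest $\kappa_3M$ squares dominate $S_x:=\sum_i(\bm a_i^T\bm x)^2\bm 1\{|\bm a_i^T\bm x|\le\gamma\}$. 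Now $S_x$ is a sum of i.i.d.\ variables bounded in $[0,\gamma^2]$ with mean at least $\mu$ by \eqref{eq:gamma sub 1}, so Hoeffding gives $S_x\ge(1-\eta)\mu M$ with failure probability $\exp(-\eta^2\mu^2M/(2\gamma^4))$, matching the stated term.

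\textbf{Step 2 (uniformity).} Take an $\varepsilon$-net $\mathcal N$ of $\mathbb S^{m-1}$ with $\varepsilon\asymp\eta\sqrt{\mu}/c_3$, so that $|\mathcal N|\le(16c_3/(\eta\sqrt\mu)+1)^m$, and union bound the two events over $\mathcal N$; this produces the factor $2(\cdot)^m$. To pass from the net to the whole sphere, write $\bm x=\bm y+(\bm x-\bm y)$ with $\bm y\in\mathcal N$ and use the triangle inequality in the seminorm $\bm v\mapsto(\sum_{i\in\Omega}(\bm a_i^T\bm v)^2)^{1/2}$:
\[
\Big(\sum_{i\in\Omega}(\bm a_i^T\bm x)^2\Big)^{1/2}\ge\sqrt{(1-\eta)\mu M}-\varepsilon\,\Big\|\sum_{i\in[M]}\bm a_i\bm a_i^T\Big\|^{1/2}.
\]
A standard sub-Gaussian covariance bound (the analogue of \Cref{lem:cov esti}, valid since $M\ge m$) gives $\|\sum_i\bm a_i\bm a_i^T\|\le c_3^2M$ with probability $1-2e^{-M}$; this is where $c_3$, depending on the sub-Gaussian norm, enters. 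Choosing $\varepsilon$ as above and squaring yields $(1-\eta)^2\mu M$.

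\textbf{Part (ii)} mirrors this argument. For fixed $\bm x$, the worst $\Omega$ with $|\Omega|\le\kappa_4M$ takes the largest squares. Hoeffding shows $\#\{i:|\bm a_i^T\bm x|\ge\gamma\}\ge\kappa_4M$, because its mean is at least $(1-\Psi_2(\gamma))M$. Care is needed here: the condition should be read as the tail mass exceeding $(1+\eta)\kappa_4$, mirroring \Cref{thm:upperbound}, and I would check the intended convention. Then $\sum_{i\in\Omega}(\bm a_i^T\bm x)^2\le\sum_i(\bm a_i^T\bm x)^2\bm 1\{|\bm a_i^T\bm x|\ge\gamma\}$. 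These summands are sub-exponential rather than bounded, so Bernstein's inequality gives the $\exp(-c_4\eta^2\mu^2M)$ term. Finally, a $1/4$-type net argument for the operator norm, using the standard bound $\|A\|\le(1-2\varepsilon)^{-1}\max_{\mathcal N}\bm y^TA\bm y$ with $\varepsilon\asymp\eta$, gives the $(6/\eta+1)^m$ factor.

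\textbf{Main obstacle.} The main obstacle is Step 1's exchange argument together with the net perturbation in (i). The "worst $\Omega$" depends on $\bm x$, so the perturbation must be done on the seminorm restricted to $\Omega$ for all $\Omega$ simultaneously; I would handle this by bounding with the full-sample operator norm as above.
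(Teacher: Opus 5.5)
This is essentially the paper's proof. Both use the full-matrix bound $\|\bm A\|\lesssim c_3\sqrt{M}$ with probability $1-2e^{-M}$, an $\varepsilon$-net with $\varepsilon\asymp\eta\sqrt{\mu}/c_3$, the perturbation step through $\|\bm A_\Omega\|\le\|\bm A\|$, the exchange argument from $\#\{i:(\bm a_i^T\bm x)^2\le\gamma^2\}\le\kappa_3M\le|\Omega|$, bounded Hoeffding for the truncated sum, and part (ii) transplanted from \Cref{thm:upperbound}, which the paper only calls ``analogous.''

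Your suspicion about the hypothesis $\Psi_2(\gamma)\ge(1+\eta)\kappa_4$ is justified. The count step needs $\min_{\bm x}\P(|\bm a_i^T\bm x|\ge\gamma)\ge(1+\eta)\kappa_4$, which follows from $1-\Psi_1(\gamma)\ge(1+\eta)\kappa_4$, so your ``mean at least $(1-\Psi_2(\gamma))M$'' should involve $\Psi_1$. Two further details need care:
\begin{itemize}
\item The Bernoulli counts need Bernstein with variance $\lesssim\kappa_3M$, as in the paper, since Hoeffding only gives $\exp(-2\eta^2\kappa_3^2M)$.
\item The identity $\E N_{\bm x}=M(P_x(\gamma)-P_x(-\gamma))$ drops a possible atom at $-\gamma$. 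The paper makes the same slip, and it makes (i) false for atomic marginals: Rademacher entries with $m=1$, $\gamma=1$, $\kappa_3=0.6$, $\eta=0.1$ give $\mu=1$, yet for large $M$ the set $|\Omega|=\lceil 0.6M\rceil$ has $\lambda_{\min}<0.81M$.
\end{itemize}
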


\section{Extension to Weakly Dependent Random Vectors}\label{sec:ext}

In this section, we discuss extensions beyond the independent setting. The main observation is that independence is not essential to the geometric component of the proof. Instead, the proof of \Cref{thm:lowerbound} only requires three probabilistic ingredients: a global operator-norm bound for the full data matrix and two fixed-direction concentration bounds\footnote{In the proof of \Cref{thm:lowerbound}, we provide the global operator-norm bound in \eqref{eq:aainormbound} and the fixed-direction concentration bounds in \eqref{ineq:bernstein bernoulli} and \eqref{ineq:subexponential-lowerbound}.}. Consequently, the same proof strategy extends to weakly dependent sequences \(\{\bm a_i\}_{i=1}^M\), as long as these three ingredients can be established.

To make this extension concrete, we consider geometrically strong-mixing sequences \cite{rosenblatt1956central} as a representative class of weakly dependent data. This class is broad enough to capture many temporally dependent processes while still admitting Bernstein-type concentration inequalities needed in our proof. Let $\{X_t\}_{t\in\mathbb Z_+}$ be a stochastic process and define
\[
\mathcal F_{0}^{\tau}:=\sigma(X_t: 0 \leq t \leq \tau),
\quad
\mathcal F_{\tau}^{\infty}:=\sigma(X_t:t\ge \tau).
\]
The mixing coefficients of $\{X_t\}_{t\in\mathbb Z_+}$ are defined by
\[
\alpha(k) := \sup_{t\in\mathbb Z_+}\sup_{A\in\mathcal F_{0}^{t},\,B\in\mathcal F_{t+k}^{\infty}}
\left|\mathbb P(A\cap B)-\mathbb P(A)\mathbb P(B)\right|, \quad k=1,2,\dots.
\]
\begin{definition}
    A sequence \(\{X_t\}_{t\in\mathbb Z}\) is called {\em geometrically strong mixing} if its mixing coefficients decay geometrically, i.e., there exist some constants \(C>0\) and \(\rho\in(0,1)\) such that
\[
\alpha(k) \le C\rho^k,\quad \forall k\ge 1.
\] 
\end{definition}

The following example shows that geometrically strong mixing holds for stationary Gaussian processes with sufficiently fast covariance decay; see \Cref{prop:gaussian-cov-decay-mixing} in \Cref{app:ext 1} for a rigorous statement and proof.
\begin{exam}\label{exam:station}
A standard stationary Gaussian sequence with geometrically decaying covariance is geometrically strong mixing. Specifically, suppose there exist constants \(C>0\) and \(\rho\in(0,1)\) such that, for all \(k,t\in\mathbb Z_+\),
\[
\bm a_t \sim \mathcal N(\bm 0,\bm I_m),\quad
\mathrm{Cov}(\bm a_0,\bm a_k)
=
\mathrm{Cov}(\bm a_t,\bm a_{t+k}),\quad
\|\mathrm{Cov}(\bm a_0,\bm a_k)\|\le C\rho^k.
\]
Then \(\{\bm a_t\}_{t\in\mathbb Z_+}\) is geometrically strong mixing.
\end{exam}

A standard consequence of geometric strong mixing is that bounded functions of
the process satisfy Bernstein-type inequalities with an effective sample size
loss of logarithmic order. More precisely, throughout this section, we use the
notation
\[
L_M:=\log(eM)\log\log(e^eM),
\quad
M_{\rm eff}:=M / L_M.
\]
The following theorem gives a dependent version of the lower spectral bound, whose proof is deferred to \Cref{app:ext 2}.

\begin{theorem}
\label{thm:mixing-lower-bound}
Let $\{\bm a_i\}_{i=1}^{M}\subseteq \mathbb R^m$ be a standard Gaussian sequence satisfying geometrically strong-mixing property with coefficient $C >0$ and $\rho \in (0,1)$. Fix $\eta\in(0,1/2)$ and $\kappa\in(0,1]$, and define
\[
\gamma := \Phi^{-1}\left( \frac{1 + (1-\eta)\kappa}{2} \right),\quad
\mu := (1-\eta)\kappa-\sqrt{\frac{2\gamma^2}{\pi}}\exp\left(-\frac{\gamma^2}{2}\right).
\]
With probability at least
\[
1-\exp(-c_1M)
-
2\left(
\frac{6\sqrt B}{\eta\sqrt\mu}+1
\right)^m
\left(
\exp\left(-c\eta^2\kappa^2 M_{\rm eff}\right)
+
\exp\left(
-\frac{c\eta^2\mu^2}{\gamma^4}M_{\rm eff}
\right)
\right),
\]
for some constants $c_1, B$ depending on $C$ and $\rho$, and an absolute constant \(c>0\), it holds simultaneously for every subset $\Omega\subseteq[M]$ satisfying
$|\Omega|\ge \kappa M$ that
\[
\textstyle  \lambda_{\min}
\left(
\sum_{i\in\Omega}\bm a_i \bm a_i^T
\right)
\ge
(1-\eta)^2 \mu M.
\]
\end{theorem}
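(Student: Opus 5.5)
The plan is to reuse the three-ingredient skeleton behind \Cref{thm:lowerbound}: a global operator-norm bound, plus two fixed-direction concentration bounds. The independence-based tail bounds are replaced by Bernstein inequalities for geometrically $\alpha$-mixing sequences, which cost an effective sample size $M_{\rm eff}=M/L_M$.

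\textbf{Step 1: the deterministic reduction.} Fix a unit vector $\bm x$ and set $z_i=\bm a_i^T\bm x$; by stationarity of the marginals, $z_i\sim\mathcal N(0,1)$. For any $\Omega$ with $|\Omega|\ge\kappa M$, we have
\[
\sum_{i\in\Omega}z_i^2\;\ge\;\sum_{i\in\Omega}z_i^2\bm 1\{|z_i|\le\gamma\}.
\]
Suppose at most $(1-\eta)\kappa M$ indices satisfy $|z_i|\le\gamma$... (more precisely, the relevant event is that the set $\{i: |z_i|>\gamma\}$ has size at most $(1-(1-\eta)\kappa)M$ plus slack). Then every $\Omega$ of size $\kappa M$ must contain at least roughly $(1-\eta)\kappa M$ "small" indices once the large ones are excluded. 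By an exchange argument, the minimal sum over such $\Omega$ is then at least the sum of $z_i^2\bm 1\{|z_i|\le\gamma\}$ over all $i$, minus what the adversary can drop. The target is the expectation identity $\E[z^2\bm 1\{|z|\le\gamma\}]=\mu+\eta\kappa$-type bookkeeping. Concretely, I mirror the fixed-$\bm x$ argument of \Cref{thm:lowerbound} verbatim: it only uses the count $N_\gamma(\bm x)=\#\{i:|z_i|>\gamma\}$ and the truncated sum $S_\gamma(\bm x)=\sum_i z_i^2\bm 1\{|z_i|\le\gamma\}$.

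\textbf{Step 2: the mixing Bernstein inputs.} Both $\bm 1\{|z_i|>\gamma\}$ and $z_i^2\bm 1\{|z_i|\le\gamma\}$ are bounded measurable functions of $\bm a_i$, bounded by $1$ and $\gamma^2$ respectively. They therefore inherit geometric $\alpha$-mixing with the same coefficients. I invoke the Bernstein inequality of Merlev\`ede--Peligrad--Rio \cite{merlevede2009bernstein} for geometrically strong mixing bounded sequences:
\[
\P\Big(\Big|\sum_i (f(\bm a_i)-\E f)\Big|\ge t\Big)\le \exp\Big(-\frac{c\,t^2}{M v^2+b^2+t b L_M}\Big),
\]
where $b$ bounds $|f|$ and $v$ is a variance proxy. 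Applying it with $t=\eta\kappa M$ for the count gives $\exp(-c\eta^2\kappa^2M_{\rm eff})$. Applying it with $t=\eta\mu M$ and $b=\gamma^2$ for the truncated sum gives $\exp(-c\eta^2\mu^2M_{\rm eff}/\gamma^4)$. These are exactly the fixed-direction terms in the stated probability.

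\textbf{Step 3: the global norm bound and the net argument.} I need $\|\sum_{i=1}^M\bm a_i\bm a_i^T\|\le BM$ with probability $1-\exp(-c_1M)$. For this I write $\bm A=\Sigma^{1/2}\bm G$, where $\Sigma$ is the $Mm\times Mm$ covariance of the stacked Gaussian vector, so that $\bm G$ is standard Gaussian. Then $\|\Sigma\|\le 1+2C\sum_k\rho^k$ by a block Gershgorin bound. It is enough that the covariance bound of \Cref{exam:station} holds for the process at hand; strong mixing of a Gaussian process gives correlation decay via the Kolmogorov--Rozanov bound $\rho$-mixing $\le 2\pi\alpha$, so in general I would get the covariance decay from that bound. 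This yields $\|\bm A\|^2\le 2\|\Sigma\|(\sqrt M+\sqrt m)^2\le BM$ via Gaussian concentration of the operator norm, which is Lipschitz in $\bm G$.

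Then I take an $\epsilon$-net of $\mathbb S^{m-1}$ with $\epsilon=\eta\sqrt{\mu}/(2\sqrt B)$, which has cardinality $(6\sqrt B/(\eta\sqrt\mu)+1)^m$. I union-bound Step 2 over the net and transfer to all $\bm x$ exactly as in \Cref{thm:lowerbound}. The perturbation of $\min_\Omega\sum_{i\in\Omega}(\bm a_i^T\bm x)^2$ under $\bm x\to\bm x'$ is controlled by $\|\bm A\|$, losing a factor absorbed into $(1-\eta)^2$.

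\textbf{Main obstacle.} The delicate step is Step 3's global bound. The mixing assumption must be converted into a spectral bound on the full covariance $\Sigma$. If the Gaussian-specific route via $\rho$-mixing fails to give a clean constant, I would fall back on the mixing Bernstein inequality applied to $(\bm a_i^T\bm x)^2$ truncated at level $\log M$. That fallback would be combined with a separate union bound on $\max_i\|\bm a_i\|$ and would only cost the logarithmic factors already present in $M_{\rm eff}$.
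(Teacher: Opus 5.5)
Your architecture matches the paper's. It has a global bound $\|\sum_i\bm a_i\bm a_i^T\|\le BM$ and an $\epsilon$-net union bound. It also uses the two fixed-direction Bernstein bounds of \cite{merlevede2009bernstein}: one for $\#\{i:|\bm a_i^T\bm x|\le\gamma\}$ with $t=\eta\kappa M$, and one for the truncated sum with $t=\eta\mu M$ and range $\gamma^2$. Your Step 2 bookkeeping reproduces exactly the two exponents in the statement.

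The only genuinely different piece is the operator-norm lemma. The paper gets covariance decay from the quadrant identity $\P(X\ge0,Y\ge0)-\tfrac14=\tfrac1{2\pi}\arcsin r$ and then works direction by direction. For fixed $\bm x$, the vector $(\bm a_i^T\bm x)_{i\le M}$ has covariance $\bm\Sigma_{\bm x}$ with $\|\bm\Sigma_{\bm x}\|\le L$ by Gershgorin. A Gaussian quadratic-form tail then controls $\sum_i(\bm a_i^T\bm x)^2$, and a $1/4$-net finishes.

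You instead use Kolmogorov--Rozanov, which gives the same $2\pi\alpha$ bound, apply block Gershgorin to the full $Mm\times Mm$ covariance, and use Lipschitz concentration of $\|\bm A\|$. That works and avoids the second net. However, Lipschitz concentration only controls $\|\bm A\|-\E\|\bm A\|$. The bound $\E\|\bm A\|\le\|\Sigma\|^{1/2}(\sqrt M+\sqrt m)$ for a matrix correlated across both indices does not follow from $\mathrm{vec}(\bm A)=\Sigma^{1/2}\mathrm{vec}(\bm G)$ alone. Justify it, e.g.\ by Sudakov--Fernique, comparing $\bm u^T\bm A\bm v$ with $\|\Sigma\|^{1/2}\bm u^T\bm G\bm v$. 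Both routes need $M\gtrsim m$ to read the bound as $BM$. Your fallback is unnecessary, and truncating at level $\log M$ could not deliver the $\exp(-c_1M)$ failure term anyway.

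Two corrections to the rest. First, Step 1 as written misstates the one deterministic step that makes data-dependent $\Omega$ admissible. The event you need is $|\mathcal A_{\bm x}|\le\kappa M$ for $\mathcal A_{\bm x}=\{i:|\bm a_i^T\bm x|\le\gamma\}$, i.e.\ an upper bound on the number of small indices. Your ``more precisely'' version, an upper bound on $\#\{i:|z_i|>\gamma\}$, is the opposite direction and gives nothing here. It is also false that every $\Omega$ must contain many small indices. On the correct event, $|\Omega\setminus\mathcal A_{\bm x}|\ge|\mathcal A_{\bm x}\setminus\Omega|$. Each index of $\Omega\setminus\mathcal A_{\bm x}$ contributes more than $\gamma^2$, and each index of $\mathcal A_{\bm x}\setminus\Omega$ at most $\gamma^2$. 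Hence $\sum_{i\in\Omega}z_i^2\ge S_\gamma(\bm x)$, with nothing ``dropped''. Moreover $\E[z^2\bm 1\{|z|\le\gamma\}]=\mu$ exactly, not ``$\mu+\eta\kappa$''.

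Second, the transfer from the net must be done on the unsquared $\|\bm A_\Omega^T\bm x\|$. Perturbing the squared sum would force $\epsilon\sim\eta\mu/B$ and a larger net than stated. Even then, $\epsilon=\eta\sqrt\mu/(2\sqrt B)$ does not close, since $\sqrt{1-\eta}-\eta/2<1-\eta$ for every $\eta\in(0,1)$. Take $\epsilon=\eta\sqrt\mu/(3\sqrt B)$ as the paper does. Then $\sqrt{1-\eta}-\eta/3\ge1-\eta$ for $\eta\le3/4$, and this radius gives exactly the stated net size $(6\sqrt B/(\eta\sqrt\mu)+1)^m$.
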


\section{Application to K-Subspace Clustering}\label{sec:KSS}

In this section, we apply the established concentration inequalities to analyze the global optimality of the K-subspace clustering problem under the low-rank Gaussian mixture model (LRGMM).

\subsection{Problem Setup and Formulation}\label{subsec:MoG}

\begin{definition}[Low-Rank Gaussian Mixture Model]\label{def:MoG}
Let $C_1^\star,\dots,C_K^\star$ be a partition of $[N]$, where $N\ge K\ge 2$.
Let $n\ge d\ge 1$, and let $\bm U_k^\star\in\mathcal O^{n\times d}$ be an
orthonormal basis for each $k\in[K]$. We say that
$\{\bm z_i\}_{i=1}^N\subseteq\mathbb R^n$ is generated from the LRGMM if, for
each $k\in[K]$,
\begin{align}\label{eq:GMM}
\bm z_i=\bm U_k^\star \bm a_i+\bm e_i,
\qquad i\in C_k^\star,
\end{align}
where $\bm a_i\overset{\mathrm{i.i.d.}}{\sim}\mathcal N(\bm 0,\bm I_d)$ and
$\bm e_i\overset{\mathrm{i.i.d.}}{\sim}\mathcal N(\bm 0,\delta_k^2\bm I_n)$
for $i\in C_k^\star$. The latent vectors $\{\bm a_i\}_{i=1}^N$ and noises
$\{\bm e_i\}_{i=1}^N$ are mutually independent.
\end{definition}

Under this model, if $i\in C_k^\star$, then $\bm z_i$ lies near the subspace
spanned by $\bm U_k^\star$, with additive isotropic Gaussian noise. In particular,
\[
\bm z_i\sim
\mathcal N\left(
\bm 0,\,
\bm U_k^\star\bm U_k^{\star T}+\delta_k^2\bm I_n
\right),
\qquad i\in C_k^\star,
\]
where $\bm U_k^\star$ captures the underlying subspace and $\delta_k$ measures
the noise level in cluster $k$.

\vspace{-0.1in}
\paragraph{Problem formulation.}
Given unlabeled samples $\{\bm z_i\}_{i=1}^N$ and the number of clusters $K$,
the $K$-subspace clustering problem aims to recover both the subspace bases
$\{\bm U_k^\star\}_{k=1}^K$ and the cluster assignment
$\{C_k^\star\}_{k=1}^K$. We consider the following formulation:
\begin{align}\label{eq:loss}
 \min_{\bm U,C}\quad
F(\bm U,C)
:=
\sum_{k=1}^K\sum_{i\in C_k}
\left\|
\bm z_i-\bm U_k\bm U_k^T\bm z_i
\right\|^2
\quad
\st\quad
\bm U_k^T\bm U_k=\bm I_d,\ \forall k\in[K],
\end{align}
where $\bm U=\{\bm U_k\}_{k=1}^K$ denotes the estimated subspaces and
$C=\{C_k\}_{k=1}^K$ denotes a partition of $[N]$.

\begin{remark}\label{rem:dependecen}
Any global minimizer $(\hat{\bm U},\hat C)$ is a function of the full dataset
$\{\bm z_i\}_{i=1}^N$. Hence the selected subsets
$\{\bm z_i:i\in\hat C_k\}$ are data-dependent, which creates selection-induced
statistical dependence and makes the analysis of global optimality nontrivial.
\end{remark}

To evaluate recovery performance, we use permutation-invariant metrics. The
clustering error between an estimator $\hat C$ and the ground truth $C^\star$ is
\[
\textstyle  \ell(\hat C,C^\star)
:=
1-
\max_{\pi\in\Pi}
\frac{1}{N}
\sum_{k=1}^K
\left|
\hat C_{\pi(k)}\cap C_k^\star
\right|,
\]
where $\Pi$ is the set of all permutations. Similarly, for
$\hat{\bm U}:=\{\hat{\bm U}_k\}_{k=1}^K$, we define the subspace error
\[
\textstyle d_F(\hat{\bm U},\bm U^\star)
:=
\min_{\pi\in\Pi}
\sum_{k=1}^K
\left\|
\hat{\bm U}_{\pi(k)}\hat{\bm U}_{\pi(k)}^T
-
\bm U_k^\star\bm U_k^{\star T}
\right\|_F.
\]

\vspace{-0.1in}
\paragraph{Metrics for subspace clustering.}
The difficulty of subspace clustering is mainly governed by the separation
between subspaces, the noise level, and the cluster-size balance. We measure the
affinity between the $k$-th and $l$-th true subspaces by
\begin{align}\label{def:affi}
\mu_{kl}
:=
\|\bm U_k^{\star T}\bm U_l^\star\|_F
/ \sqrt d ,
\end{align}
and define $\mu_{\max}:=\max_{k\neq l}\mu_{kl},\,
\delta_{\max}:=\max_{k\in[K]}\delta_k.$
A larger subspace separation and a smaller noise level make clustering easier.
Accordingly, we define the signal-to-noise ratio as
\begin{align}\label{def:SNR}
\mathrm{SNR}
:=
(1-\mu_{\max}^2) / \delta_{\max}.
\end{align}
Finally, to quantify cluster-size imbalance, let $N_k^\star:=|C_k^\star|$ and
$N_{\min}^\star:=\min_{k\in[K]}N_k^\star$. We define
\begin{align}\label{def:kappa}
\kappa_N
:=
N / (N_{\min}^\star K).
\end{align}


\subsection{Global Optimality of Loss function}\label{subsec:opti}

In our analysis, we impose the following assumption on the total number of samples $N$, the number of samples in each subspace $\{N_k\}_{k=1}^K$, the subspace dimension $d$, and the ambient dimension $n$:
\begin{assumption}\label{AS:1}
The parameters $N$, $\{N_k^\star\}_{k=1}^K$, $d$ and $n$ in \Cref{def:MoG} satisfy
\begin{align}\label{eq:ndk}
   & N_{\min}^\star \gtrsim dK,\quad n \gtrsim \log n + \log N, \quad d \gtrsim \log d + \log N. 
\end{align}
\end{assumption}

It is worth mentioning that the condition \(N_{\min}^\star \gtrsim dK\) is mild and practically reasonable. In many clustering applications, although the ambient dimension \(n\) can be very large, the intrinsic or subspace dimension \(d\) is typically much smaller than the number of data points. 

Using this and \Cref{thm:lowerbound}, we can derive a recovery bound for the global optimal solutions of the K-subspace clustering problem \eqref{eq:loss} as follows: 

\begin{theorem}\label{thm:opti 1}
Consider the LRGMM in \Cref{def:MoG}, and suppose \Cref{AS:1} holds. Let $(\hat{\bm U},\hat{C})$ be an optimal solution of Problem \eqref{eq:loss}. Suppose that  
\begin{align}\label{eq:SNR1}
\mathrm{SNR} \gtrsim  \kappa_N K\sqrt{\frac{n}{d}} .
\end{align}
It holds with probability at least $1-N^{-\Omega(1)}$ that  
\begin{align}\label{eq:opti 1}
 \ell(\hat{C},C^\star) \lesssim \left( \kappa_N \sqrt{\frac{n}{d}}\cdot \frac{K} {\mathrm{SNR}} \right)^{\frac{1}{3}}. 
\end{align} 
\end{theorem}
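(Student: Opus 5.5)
We plan to compare the objective value of the optimal solution $(\hat{\bm U},\hat C)$ with that of the ground truth $(\bm U^\star,C^\star)$, and to show that a large clustering error would force $F(\hat{\bm U},\hat C)$ above $F(\bm U^\star,C^\star)$.

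\textbf{Step 1: upper bound at the ground truth.} For $i\in C_k^\star$ we have $\bm z_i-\bm U_k^\star\bm U_k^{\star T}\bm z_i=(\bm I-\bm U_k^\star\bm U_k^{\star T})\bm e_i$. Chi-square concentration then gives, with probability $1-N^{-\Omega(1)}$,
\[
F(\hat{\bm U},\hat C)\le F(\bm U^\star,C^\star)\lesssim \delta_{\max}^2(n-d)N .
\]
The assumption $n\gtrsim\log N$ is what allows a union bound over the clusters.

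\textbf{Step 2: lower bound on a misassigned block.} Fix the optimal permutation and consider the overlap sets $S_{kl}:=\hat C_l\cap C_k^\star$ with $k\neq l$. The points in $S_{kl}$ are projected onto $\hat{\bm U}_l^\perp$, so they contribute
\[
\sum_{i\in S_{kl}}\|(\bm I-\hat{\bm U}_l\hat{\bm U}_l^T)(\bm U_k^\star\bm a_i+\bm e_i)\|^2 .
\]
This sum is roughly at least $\lambda_{\min}\big(\sum_{i\in S_{kl}}\bm a_i\bm a_i^T\big)\,\|(\bm I-\hat{\bm U}_l\hat{\bm U}_l^T)\bm U_k^\star\|_F^2/d$, up to cross terms with the noise.

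Since $S_{kl}$ depends on the data, we apply \Cref{thm:lowerbound} to $\{\bm a_i\}_{i\in C_k^\star}$, which are i.i.d.\ $\mathcal N(\bm 0,\bm I_d)$, with $M=N_k^\star$, $m=d$ and $\kappa_1=|S_{kl}|/N_k^\star$. Because $\mu$ behaves like $\kappa_1^3$ for small $\kappa_1$ (from $\mu=\sqrt{2/\pi}\int_0^\gamma x^2e^{-x^2/2}dx$ with $\gamma\asymp\kappa_1$), this gives
\[
\lambda_{\min}\Big(\sum_{i\in S_{kl}}\bm a_i\bm a_i^T\Big)\gtrsim (|S_{kl}|/N_k^\star)^3 N_k^\star .
\]
The cubic dependence is the source of the exponent $1/3$ in \eqref{eq:opti 1}. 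To cover all $\kappa_1$ simultaneously we discretize $\kappa_1$ on a grid and take a union bound; the conditions $N_{\min}^\star\gtrsim dK$ and $d\gtrsim\log N$ are what keep this union bound affordable.

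\textbf{Step 3: geometric lower bound.} Next we need $\|(\bm I-\hat{\bm U}_l\hat{\bm U}_l^T)\bm U_k^\star\|_F^2\gtrsim d(1-\mu_{\max}^2)$ for $k\neq l$. The argument goes as follows.
\begin{itemize}[leftmargin=*]
\item By pigeonhole and the choice of optimal permutation, each $\hat{\bm U}_l$ must be close to its matched $\bm U_l^\star$. Otherwise the correctly assigned block $\hat C_l\cap C_l^\star$, which has size $\gtrsim N_{\min}^\star/K$ by the definition of $\kappa_N$, would itself contribute too much, again by \Cref{thm:lowerbound}.
\item Closeness to $\bm U_l^\star$ transfers the affinity bound: $\|\hat{\bm U}_l^T\bm U_k^\star\|_F^2\lesssim d\mu_{kl}^2$ plus small error terms.
\item The noise cross terms are controlled by a uniform Gaussian bound, as in \Cref{thm:upperbound}, applied to the $\bm e_i$ projected onto $n$-dimensional directions. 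This is where the factor $\sqrt{n/d}$ enters.
\end{itemize}

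\textbf{Step 4: combine.} Summing over $k\neq l$ and using convexity in $|S_{kl}|$ gives
\[
(1-\mu_{\max}^2)\,N\,\ell^3/(\kappa_N K)^{c}\ \lesssim\ \delta_{\max}\sqrt{n/d}\,N ,
\]
after balancing the noise cross terms, which scale like $\delta_{\max}\sqrt{n}$ times the signal. Solving for $\ell$ yields \eqref{eq:opti 1}, and \eqref{eq:SNR1} ensures the resulting bound is below a small constant. That smallness is what makes the pigeonhole matching in Step 3 self-consistent.

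\textbf{Main obstacle.} We expect Step 3 to be hardest: turning the objective gap into a per-block lower bound while $\hat{\bm U}_l$ is itself data dependent. We would try to decouple by taking a supremum over $\hat{\bm U}_l$ in a net of the Stiefel manifold for the noise terms, keeping \Cref{thm:lowerbound} purely on the latent vectors $\bm a_i$, where the data-dependent subset is already handled uniformly.
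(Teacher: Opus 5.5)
Your overall skeleton matches the paper: compare with the ground-truth objective, lower-bound each block by $\lambda_{\min}$ of the latent covariance via \Cref{thm:lowerbound} inside each true cluster, invoke subspace separation, and read off the exponent $1/3$ from $\mu\gtrsim\kappa_1^3$. However, Step 3 assumes the fact that actually needs proof.

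\textbf{The matching in Step 3 is not justified.} Neither pigeonhole nor the choice of the max-overlap permutation makes each matched block $\hat C_l\cap C_l^\star$ large. Pigeonhole only says each $C_k^\star$ meets \emph{some} $\hat C_l$ in at least $N_k^\star/K$ points. Nothing you have shown prevents one $\hat C_l$ from absorbing large pieces of two true clusters, and then a diagonal block of the optimal permutation can be tiny. The definition of $\kappa_N$ constrains only the true clusters, not $\hat C$.

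Ruling this out is the heart of the paper's argument (\Cref{prop:error rate}). Suppose $|\hat C_k\cap C_{l_j}^\star|\ge\kappa_1N_{l_j}^\star$ for two indices $l_1\neq l_2$. Then \Cref{lem:optimal}, \eqref{ineq:lambdamin} and \Cref{thm:lowerbound} give $\sum_{j=1,2}(d-\|\hat{\bm U}_k^T\bm U_{l_j}^\star\|_F^2)\le E/((1-\eta)^2\mu N_{\min}^\star)$. The triangle inequality \eqref{ineq:triangle} then contradicts $d-\|\bm U_{l_1}^{\star T}\bm U_{l_2}^\star\|_F^2\ge(1-\mu_{\max}^2)d$ under \eqref{eq:delta 1}. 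Only after this does pigeonhole with $\kappa_1\le 1/K$ yield a permutation. Your ``closeness transfers the affinity bound'' is the right tool, but you must apply it to a single $\hat C_k$ with two large overlaps to get injectivity.

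Relatedly, the obstacle you flag is not where the difficulty lies. Cauchy--Schwarz bounds every noise cross term by $\|\bm a_i\|\|\bm e_i\|$, which removes $\hat{\bm U}$ entirely. Per-sample norm concentration then gives $E=9N(\delta_{\max}\sqrt{nd}+\delta_{\max}^2n)$, with no Stiefel net and no use of \Cref{thm:upperbound}. The factor $\sqrt{n/d}$ is just $E$ divided by the $d$ in $(1-\mu_{\max}^2)d$.

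\textbf{The grid over data-dependent $\kappa_1$ in Step 2 fails for small blocks.} The uniform bound $\lambda_{\min}(\sum_{i\in S_{kl}}\bm a_i\bm a_i^T)\gtrsim(|S_{kl}|/N_k^\star)^3N_k^\star$ fails deterministically when $|S_{kl}|<d$, since the matrix is then singular. Moreover, the probability \eqref{eq:prob} is non-vacuous only when roughly $\kappa_1^6N_k^\star\gtrsim d\log(1/\kappa_1)$ (cf.\ \Cref{app:kappa}). The assumptions $N_{\min}^\star\gtrsim dK$ and $d\gtrsim\log N$ do not provide this.

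The paper never lets $\kappa_1$ depend on the data. It fixes one threshold $\kappa_1\asymp(\kappa_NK\sqrt{n/d}/\mathrm{SNR})^{1/3}$, chosen so that \eqref{eq:delta 1} holds via $\mu\gtrsim\kappa_1^3$. It applies \Cref{thm:lowerbound} once per true cluster and counts every below-threshold block as misclassified, so $\ell(\hat C,C^\star)\le(K-1)\kappa_1$. The exponent $1/3$ thus comes from the threshold at which two large overlaps become impossible, not from summing cubic lower bounds over misassigned blocks. Your grid and convexity step can be dropped.

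Finally, the $1/d$ in Step 2 is spurious. Directly, $\langle\sum_{i\in S_{kl}}\bm a_i\bm a_i^T,\bm U_k^{\star T}(\bm I-\hat{\bm U}_l\hat{\bm U}_l^T)\bm U_k^\star\rangle\ge\lambda_{\min}(\cdot)\|(\bm I-\hat{\bm U}_l\hat{\bm U}_l^T)\bm U_k^\star\|_F^2$. Keeping the $1/d$ would cost a factor $d$ relative to the rate you claim in Step 4.
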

The proof is deferred to \Cref{app:proof thm}. This theorem shows that the clustering error decays polynomially as the SNR ratio increases. In particular, higher SNR leads to more accurate recovery of the cluster assignments, and the error bound vanishes as the SNR grows sufficiently large.  

\vspace{-0.1in}
\paragraph{Experimental verification.} The recovery bound in \Cref{thm:opti 1} has universal constants. To empirically assess the magnitude of the constant, we simulate data from the LRGMM and compute
\[
R
:=
\frac{\ell(\hat C,C^\star)}
{\left(\sqrt{n/d}\,\kappa_N K/\mathrm{SNR}\right)^{1/3}},
\]
which is the ratio between the two sides of \eqref{eq:opti 1}. 
We vary the noise level \(\delta\), and hence the SNR, while fixing
\(K=2\), \(n=100\), \(N_1=N_2=500\), \(d=20\), and \(\mu_{\max}=0.67\). To compute \(\hat C\), we run the \(K\)-subspaces algorithm initialized by the thresholding inner-product spectral method proposed in \cite{wang2022convergence}. 
For each parameter setting, we generate independent data and repeat the experiment 10 times. 
The results are summarized in \Cref{tab:snr_error}. 
We observe that \(R\) remains bounded and exhibits small variance across the tested SNR values, suggesting that the unspecified constant in \Cref{thm:opti 1} is moderate in this setting. 

\begin{table}[t]
\centering
\caption{Empirical values of the ratio \(R\) under varying noise levels.}
\label{tab:snr_error}
\begin{tabular}{c c c c}
\toprule
\(\delta\) & SNR & Mean of \(R\) & Variance of \(R\) \\
\midrule
0.200 & 2.500 & 0.0000 & 0.00 \\
0.300 & 1.667 & 0.0007 & 0.00 \\
0.400 & 1.250 & 0.0034 & 0.00 \\
0.500 & 1.000 & 0.0129 & 0.00 \\
0.600 & 0.833 & 0.0397 & 0.00 \\
0.700 & 0.714 & 0.0862 & 0.01 \\
0.800 & 0.625 & 0.1858 & 0.03 \\
0.900 & 0.556 & 0.2247 & 0.02 \\
1.000 & 0.500 & 0.2226 & 0.01 \\
\bottomrule
\end{tabular} 
\end{table}

\subsection{Proof Outline}
In this subsection, we outline the proof of \Cref{thm:opti 1}. We first provide a necessary condition for the optimal solutions of Problem \eqref{eq:loss} as follows, whose proof is deferred to \Cref{proof:prop opti}. 

\begin{proposition}\label{prop:opti}
Let $(\hat{\bm U}, \hat{C})$ be an optimal solution of Problem \eqref{eq:loss}. It holds that for each $k \in [K]$,
\begin{align}\label{eq:Ck}
\hat{C}_k = \left\{ i \in [N]: \| \hat{\bm U}_k^T\bm z_i\| \ge \|\hat{\bm U}_l^T\bm z_i\|,\ \forall l \neq k \right\}.
\end{align}
Here, the columns of $\hat{\bm U}_k$ consist of eigenvectors of $\sum_{i \in \hat{C}_k} \bm z_i\bm z_i^T$ associated with top $d$ eigenvalues. 
\end{proposition}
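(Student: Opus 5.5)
The plan is to prove both parts of \Cref{prop:opti} directly from global optimality, by fixing one block of variables and optimizing over the other.

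\textbf{Assignment rule.} Since $\hat{\bm U}_k^T\hat{\bm U}_k=\bm I_d$, the projector $\hat{\bm U}_k\hat{\bm U}_k^T$ is an orthogonal projection, and Pythagoras gives
\[
\|\bm z_i-\hat{\bm U}_k\hat{\bm U}_k^T\bm z_i\|^2=\|\bm z_i\|^2-\|\hat{\bm U}_k^T\bm z_i\|^2 .
\]
Hence
\[
F(\hat{\bm U},C)=\sum_{i=1}^N\|\bm z_i\|^2-\sum_{k=1}^K\sum_{i\in C_k}\|\hat{\bm U}_k^T\bm z_i\|^2 .
\]
With $\hat{\bm U}$ fixed, minimizing over partitions decouples across points: each $i$ contributes $-\|\hat{\bm U}_{k(i)}^T\bm z_i\|^2$ to the objective. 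Suppose some $i\in\hat C_k$ had $\|\hat{\bm U}_l^T\bm z_i\|>\|\hat{\bm U}_k^T\bm z_i\|$ for some $l$. Moving $i$ from $\hat C_k$ to $\hat C_l$ would strictly decrease $F$, contradicting optimality of $(\hat{\bm U},\hat C)$. So every $i\in\hat C_k$ satisfies $\|\hat{\bm U}_k^T\bm z_i\|\ge\|\hat{\bm U}_l^T\bm z_i\|$ for all $l\ne k$.

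\textbf{Subspace step.} Now fix $\hat C$. The objective separates over $k$, and for each $k$ we must maximize
\[
\sum_{i\in\hat C_k}\|\bm U_k^T\bm z_i\|^2=\mathrm{tr}\left(\bm U_k^T\bm S_k\bm U_k\right),\qquad \bm S_k:=\sum_{i\in\hat C_k}\bm z_i\bm z_i^T,
\]
over the Stiefel manifold. By Ky Fan's maximum principle, the maximum equals the sum of the top $d$ eigenvalues of $\bm S_k$. Equality holds exactly when the range of $\bm U_k$ is a top-$d$ invariant subspace of $\bm S_k$; this is unique up to rotation when there is an eigengap. In that case $\hat{\bm U}_k$ may be taken, after an orthogonal change of basis that leaves the objective unchanged, to consist of the top $d$ eigenvectors. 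Optimality of $\hat{\bm U}_k$ therefore gives the second claim.

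\textbf{Main obstacle: ties.} The assignment argument only shows that $\hat C_k$ is contained in the set on the right of \eqref{eq:Ck}. A point where two projections have equal norm can belong to only one cluster, so literal set equality may fail. I will state \eqref{eq:Ck} up to ties and argue these are harmless. For continuous Gaussian data, the event $\|\hat{\bm U}_k^T\bm z_i\|=\|\hat{\bm U}_l^T\bm z_i\|$ has probability zero for any fixed pair of bases. Because $\hat{\bm U}$ is data-dependent, however, this fixed-basis fact does not transfer automatically. I will therefore either adopt a tie-breaking convention, or simply note that any tie assignment yields the same objective value, so it can be reassigned without loss.

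The non-uniqueness of eigenvectors under repeated eigenvalues is handled in the same way: I will read the second claim as ``can be chosen to consist of'' top-$d$ eigenvectors.
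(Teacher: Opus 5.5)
Your proposal is correct and follows the paper's proof essentially step for step. With $\hat C$ fixed, each $\hat{\bm U}_k$ must maximize $\langle \bm U_k\bm U_k^T,\sum_{i\in\hat C_k}\bm z_i\bm z_i^T\rangle$ and so span a top-$d$ eigenspace; with $\hat{\bm U}$ fixed, moving a point to a cluster with strictly larger projected norm would strictly decrease $F$. Your explicit treatment of ties (reading \eqref{eq:Ck} up to tie-breaking, and the eigenvector claim as ``can be chosen'') only spells out what the paper compresses into the phrase ``up to arbitrary tie-breaking.''
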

Based on the optimality condition of $(\hat{\bm U},\hat{C})$, we have the following lemma, whose proof is deferred to \Cref{app:prop opt}. 
\begin{lemma}\label{lem:optimal}
    Let $(\hat{\bm U},\hat{C})$ be an optimal solution of Problem \eqref{eq:loss}. We have
\begin{equation}\label{ineq:sumE}
    \sum_{k=1}^K\sum_{l=1}^K \sum_{i \in \hat{C}_k\cap C_l^\star} \left(\|\bm a_i\|^2 - \|\hat{\bm U}_k^T\bm U_l^\star\bm a_i\|^2 \right) \le E,
\end{equation} 
where $E = 9N\left( \delta_{\max} \sqrt{nd} + \delta_{\max}^2 n \right)$.
\end{lemma}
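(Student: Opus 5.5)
The plan is to compare the objective value at the optimum $(\hat{\bm U},\hat C)$ with its value at the ground truth $(\bm U^\star,C^\star)$, and then control every noise-dependent term by \emph{data-independent} norm bounds. This sidesteps the dependence of $\hat{\bm U}$ on the data noted in \Cref{rem:dependecen}. The resulting inequality is deterministic on a high-probability event on which all $\|\bm a_i\|$ and $\|\bm e_i\|$ are controlled, so I will state \eqref{ineq:sumE} as holding on that event.

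\textbf{Step 1 (rewrite the objective).} Since $\bm U_k$ has orthonormal columns, $\|\bm z_i-\bm U_k\bm U_k^T\bm z_i\|^2=\|\bm z_i\|^2-\|\bm U_k^T\bm z_i\|^2$. The term $\sum_i\|\bm z_i\|^2$ is the same for every partition, so $F(\hat{\bm U},\hat C)\le F(\bm U^\star,C^\star)$ becomes
\[
\sum_{l=1}^K\sum_{i\in C_l^\star}\|\bm U_l^{\star T}\bm z_i\|^2\le \sum_{k=1}^K\sum_{l=1}^K\sum_{i\in\hat C_k\cap C_l^\star}\|\hat{\bm U}_k^T\bm z_i\|^2 .
\]

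\textbf{Step 2 (expand using the model).} For $i\in C_l^\star$ substitute $\bm z_i=\bm U_l^\star\bm a_i+\bm e_i$. Since $\bm U_l^{\star T}\bm U_l^\star=\bm I_d$, the left side contributes
\[
\|\bm a_i\|^2+2\bm a_i^T\bm U_l^{\star T}\bm e_i+\|\bm U_l^{\star T}\bm e_i\|^2 .
\]
The right side contributes
\[
\|\hat{\bm U}_k^T\bm U_l^\star\bm a_i\|^2+2\langle\hat{\bm U}_k^T\bm U_l^\star\bm a_i,\hat{\bm U}_k^T\bm e_i\rangle+\|\hat{\bm U}_k^T\bm e_i\|^2 .
\]
Rearranging gives the left-hand side of \eqref{ineq:sumE}, bounded above by a sum over $i\in[N]$ of noise terms:
\[
2\langle\hat{\bm U}_k^T\bm U_l^\star\bm a_i,\hat{\bm U}_k^T\bm e_i\rangle-2\bm a_i^T\bm U_l^{\star T}\bm e_i+\|\hat{\bm U}_k^T\bm e_i\|^2-\|\bm U_l^{\star T}\bm e_i\|^2 .
\]

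\textbf{Step 3 (crude, selection-free bounds).} All projections here are contractions, so each summand is at most $4\|\bm a_i\|\|\bm e_i\|+\|\bm e_i\|^2$, where I drop the negative quadratic term. This bound holds regardless of how $\hat{\bm U}$ and $\hat C$ depend on the data.

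\textbf{Step 4 (probability event).} Next I need, simultaneously for all $i\in[N]$, that $\|\bm a_i\|^2\le 2d$ and $\|\bm e_i\|^2\le 2\delta_{\max}^2 n$. Standard $\chi^2$ (Laurent--Massart) tail bounds give each such inequality failure probability $e^{-cd}$ or $e^{-cn}$. A union bound over the $N$ indices then yields probability $1-N^{-\Omega(1)}$, provided $d\gtrsim\log N$ and $n\gtrsim\log N$, which is exactly what \Cref{AS:1} supplies. On this event each summand is at most
\[
4\sqrt{2d}\,\sqrt2\,\delta_{\max}\sqrt n+2\delta_{\max}^2n=8\delta_{\max}\sqrt{nd}+2\delta_{\max}^2n .
\]
Summing over the $N$ indices stays below $E=9N(\delta_{\max}\sqrt{nd}+\delta_{\max}^2n)$.

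\textbf{Main obstacle.} The only delicate point is avoiding any concentration argument that involves the data-dependent $\hat{\bm U}_k$. The crude operator-norm bounds in Step 3 remove this issue entirely, at the price of the loose constant $9$. What remains is to check that the numerical constants in the $\chi^2$ tail bounds fit under $9$.
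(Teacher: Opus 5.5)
Your proposal is correct and follows essentially the same route as the paper. Both arguments compare $F(\hat{\bm U},\hat C)\le F(\bm U^\star,C^\star)$, expand with $\bm z_i=\bm U_l^\star\bm a_i+\bm e_i$, bound the noise terms by $4\|\bm a_i\|\|\bm e_i\|+O(\|\bm e_i\|^2)$ using only contraction and Cauchy--Schwarz (so the data dependence of $\hat{\bm U}$ never enters), and then work on a high-probability event on which every $\|\bm a_i\|$ and $\|\bm e_i\|$ is controlled.

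The differences are cosmetic. The paper bounds the negative term in absolute value, giving $2\|\bm e_i\|^2$ where you drop it. It also controls the norms through the additive deviations $\alpha=2\sqrt{\log N}+2$ of \Cref{lem:norm ae}, using $d\gtrsim(\sqrt{\log N}+1)^2$, rather than your factor-$2$ $\chi^2$ thresholds. Your constant check $8\delta_{\max}\sqrt{nd}+2\delta_{\max}^2n\le 9(\delta_{\max}\sqrt{nd}+\delta_{\max}^2n)$ is already complete. The tail-bound constants only determine how large the implicit constant in $d,n\gtrsim\log N$ must be; they do not affect whether the bound fits under $9$.
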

An important observation is that
\begin{align}\label{ineq:lambdamin}
   \sum_{i \in \hat{C}_k\cap C_l^\star} \left(\|\bm a_i\|^2 - \|\hat{\bm U}_k^T\bm U_l^\star\bm a_i\|^2 \right) 
    = &  \left\langle \sum_{i \in \hat{C}_k\cap C_l^\star} \bm a_i \bm a_i^T, \bm I - \hat{\bm U}_k^T\bm U_l^\star(\hat{\bm U}_k^T\bm U_l^\star)^T \right\rangle \notag \\
  \geq & \lambda_{\min} \left( \sum_{i \in \hat{C}_k\cap C_l^\star} \bm a_i \bm a_i^T \right) \left( d - \|\hat{\bm U}_k^T\bm U_l^\star\|_F^2 \right).
\end{align}
It is important to note that any optimal solution $(\hat{\bm U}, \hat{C})$ of Problem \eqref{eq:loss} depends on random vectors $\{\bm a_i\}_{i=1}^N$ and $\{\bm e_i\}_{i=1}^N$; see \Cref{rem:dependecen}. Due to this dependence, we cannot directly apply the concentration inequality in \Cref{lem:cov esti} to estimate the singular values of $\sum_{i \in \hat{C}_k\cap C_l^\star} \bm a_i\bm a_i^T$. This challenge motivates the use of \Cref{thm:lowerbound}, which provides a way to circumvent the lack of independence induced by the data-dependent clustering. 

For any $k \in [K]$, if there exist some $l_1, l_2 \in [K]$ with $l_1 \neq l_2$ such that, for some $\kappa_1 \in (0, 1]$,
\begin{equation}\label{ineq:countercase}
    |\hat{C}_k \cap C_{l_1}^\star| \geq \kappa_1 \cdot N_{l_1}^\star,\quad |\hat{C}_k \cap C_{l_2}^\star| \geq \kappa_1 \cdot N_{l_2}^\star.
\end{equation}
By applying \Cref{thm:lowerbound}, we can get that, with probability given by \eqref{eq:prob},
\begin{equation}\label{ineq:theorem21apply}
   \lambda_{\min} \left( \sum_{i \in \hat{C}_k\cap C_{l}^\star} \bm a_i \bm a_i^T \right) \geq (1-\eta)^2\mu N_l^\star \quad \text{for } l = l_1 \text{ and } l_2,
\end{equation}
where $\eta \in (0, \tfrac{1}{2})$ and $\mu$ are defined by \eqref{eq:gamma mu}. Combining \eqref{ineq:sumE}, \eqref{ineq:lambdamin}, and \eqref{ineq:theorem21apply} gives
$$
d - \|\hat{\bm U}_k^T\bm U_{l_1}^\star\|_F^2  +  d - \|\hat{\bm U}_k^T\bm U_{l_2}^\star\|_F^2 \leq \frac{E}{(1-\eta)^2\mu N_{\min}^\star}.
$$
A simple computation shows that
\begin{align}\label{ineq:triangle}
    d - \|\bm U_{l_1}^{\star T}\bm U_{l_2}^\star\|_F^2 & \leq 4 \left( d - \|\hat{\bm U}_k^T\bm U_{l_1}^\star\|_F^2  +  d - \|\hat{\bm U}_k^T\bm U_{l_2}^\star\|_F^2 \right) \leq \frac{4E}{(1-\eta)^2\mu N_{\min}^\star}.
\end{align}
Since $d - \|\bm U_{l_1}^{\star T}\bm U_{l_2}^\star\|_F^2 \geq (1-\mu_{\max}^2)d$ is lower bounded, the scenario as stated in \eqref{ineq:countercase} will not happen as long as the noise level and thus $E$ is small enough. In particular, we have the following proposition to bound the number of misclassification points, whose proof can be found in \Cref{app:prop error rate}.

\begin{proposition}\label{prop:error rate}
Let $(\hat{\bm U},\hat{C})$ be an optimal solution of Problem \eqref{eq:loss}. Suppose that  
\begin{align}\label{eq:delta 1}
\delta_{\max} \le \frac{\mu (1-\eta)^2 (1-\mu_{\max}^2)}{40 \kappa_N K } \sqrt{\frac{d}{n}},
\end{align}
for some $\eta \in (0, 1/2)$, $\kappa_1 \in (0, 1/K)$, and $\mu$ is defined by \eqref{eq:gamma mu}.
It holds with probability at least 
\begin{align*}
     1 - 2 K\left( \frac{22}{\eta\sqrt{\mu }} + 1 \right)^d \left( \exp\left( -\frac{\eta^2 \kappa_1 N^\star_{\min}}{2} \right)  + \exp\left( - c_1\eta^2\mu^2 N^\star_{\min} \right) \right) - 2K\exp(-2N^\star_{\min}),
\end{align*}
that there exists a unique permutation $\pi:[K] \to [K]$ such that
\begin{align}\label{eq:rate 1}
|\hat{C}_{\pi(k)} \cap C_k^\star| \ge \left(1 - (K-1) \kappa_1 \right) N_k^\star,\ \forall k \in [K]. 
\end{align} 
\end{proposition}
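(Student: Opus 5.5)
The plan is to apply \Cref{thm:lowerbound} once per true cluster and then run the counting argument sketched in \eqref{ineq:countercase}--\eqref{ineq:triangle}. For each $l\in[K]$, the latent vectors $\{\bm a_i\}_{i\in C_l^\star}\subseteq\R^d$ are i.i.d.\ $\mathcal N(\bm 0,\bm I_d)$, and there are $N_l^\star\ge N_{\min}^\star$ of them. Applying \Cref{thm:lowerbound} with $M=N_l^\star$, $m=d$ and the given $\kappa_1,\eta$, and taking a union bound over $l\in[K]$, we get an event $\mathcal E$ with the stated probability; we use monotonicity in $M$ to replace $N_l^\star$ by $N_{\min}^\star$ in the exponents. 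On $\mathcal E$, every subset $\Omega\subseteq C_l^\star$ with $|\Omega|\ge\kappa_1N_l^\star$ satisfies $\lambda_{\min}(\sum_{i\in\Omega}\bm a_i\bm a_i^T)\ge(1-\eta)^2\mu N_l^\star$. This holds uniformly over subsets, so it applies to the data-dependent sets $\hat C_k\cap C_l^\star$. The rest of the proof is deterministic on $\mathcal E$ together with \Cref{lem:optimal}; the noise enters only through $E$, which \Cref{lem:optimal} already bounds.

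\emph{Step 1 (no estimated cluster captures two large true pieces).} Suppose some $k$ has two distinct $l_1,l_2$ with $|\hat C_k\cap C_{l_j}^\star|\ge\kappa_1N_{l_j}^\star$. All summands in \eqref{ineq:sumE} are nonnegative, since $\|\hat{\bm U}_k^T\bm U_l^\star\bm a_i\|\le\|\bm a_i\|$. Combining \eqref{ineq:sumE}, \eqref{ineq:lambdamin} and the eigenvalue bound gives \eqref{ineq:triangle}:
\[
(1-\mu_{\max}^2)d\le\frac{4E}{(1-\eta)^2\mu N_{\min}^\star}.
\]
The factor $4$ comes from the triangle inequality for the chordal distance $\|\bm P-\bm Q\|_F^2=2(d-\langle\bm P,\bm Q\rangle)$ between the projectors, together with $(x+y)^2\le2x^2+2y^2$. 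Next we insert $E=9N(\delta_{\max}\sqrt{nd}+\delta_{\max}^2n)$ and $N=\kappa_NKN_{\min}^\star$. Under \eqref{eq:delta 1} we have $\delta_{\max}\sqrt n\le\sqrt d/40$, so $\delta_{\max}^2n\le\delta_{\max}\sqrt{nd}/40$ and $E\le 9.3\,\kappa_NKN_{\min}^\star\delta_{\max}\sqrt{nd}$. The right-hand side is then at most $4\cdot9.3/40\cdot(1-\mu_{\max}^2)d<(1-\mu_{\max}^2)d$, which is a contradiction. So each $\hat C_k$ contains at least $\kappa_1N_l^\star$ points of at most one true cluster $C_l^\star$.

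\emph{Step 2 (building the permutation).} Fix a true cluster $l$. Its $N_l^\star$ points are split among the $K$ sets $\hat C_k$. If no $k$ had $|\hat C_k\cap C_l^\star|\ge\kappa_1N_l^\star$, the total would be below $K\kappa_1N_l^\star<N_l^\star$, because $\kappa_1<1/K$. So each $l$ has at least one "large" $k$. By Step 1 each $k$ is large for at most one $l$, so the map $l\mapsto k$ is injective, hence a bijection $\pi$. For each $l$, every $k\ne\pi(l)$ is not large for $l$, because $k=\pi(l')$ is already large for some $l'\neq l$ and Step 1 forbids a second large piece. Therefore
\[
|\hat C_{\pi(l)}\cap C_l^\star|\ge N_l^\star-(K-1)\kappa_1N_l^\star,
\]
which is \eqref{eq:rate 1}. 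For uniqueness, note that $1-(K-1)\kappa_1>\kappa_1$ since $K\kappa_1<1$. Any permutation satisfying \eqref{eq:rate 1} must therefore send $l$ to a large $k$, and that $k$ is unique.

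The main obstacle is the constant bookkeeping in Step 1. We need to check that the factor $40$ in \eqref{eq:delta 1} absorbs the $9$ in $E$, the $4$ in \eqref{ineq:triangle}, the quadratic noise term, and the conversion between $N$ and $N_{\min}^\star$ via $\kappa_N$. We also need to verify the projector inequality behind \eqref{ineq:triangle} carefully. If the constants turn out slightly too tight, we would sharpen \eqref{ineq:triangle} to a factor $2$, via $\|\bm P_1-\bm P_2\|_F\le\|\bm P_1-\hat{\bm P}\|_F+\|\hat{\bm P}-\bm P_2\|_F$ with the squared sum bounded by twice the sum of squares.
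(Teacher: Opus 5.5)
Your proposal is correct and follows the paper's proof. It applies \Cref{thm:lowerbound} to each true cluster with $M=N_l^\star$, $m=d$ and takes a union bound over $l\in[K]$. It then derives the contradiction through \eqref{ineq:triangle} with the same constant bookkeeping: your $E\le 9.3\,\kappa_N K N_{\min}^\star\delta_{\max}\sqrt{nd}$ plays the role of the paper's $E<10N\delta_{\max}\sqrt{nd}$. Finally it builds $\pi$ by the same pigeonhole argument.

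The differences are small. Your projector (chordal-distance) argument gives \eqref{ineq:triangle} directly with factor $2$, while the paper's Procrustes/singular-value chain gives factor $4$. Your uniqueness argument via $1-(K-1)\kappa_1>\kappa_1$ supplies a step the paper asserts without proof. Like the paper, you tacitly rely on the $1-4N^{-1}$ event behind \Cref{lem:optimal}, which the stated probability omits.
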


Finally, the proof of \Cref{thm:opti 1} relies on the fact that, given any $\kappa_1 \in [0,1]$, we always have  
\begin{equation}\label{ineq:kappamu3}
   \mu \gtrsim \kappa_1^3. 
\end{equation}
A detailed proof can be found in \Cref{app:proof thm}.

\section{Conclusions and Limitations}\label{sec:con}

In this paper, we studied spectral concentration for sample covariance matrices formed from arbitrary, possibly data-dependent subsets of random vectors. For i.i.d. Gaussian vectors, we established sharp upper and lower eigenvalue bounds and justified their tightness through complementary estimates. We further extended the framework to sub-Gaussian vectors and weakly dependent observations, with geometrically strong-mixing Gaussian sequences as a representative example. As an application, we used our concentration inequalities to analyze the $K$-subspace clustering problem and showed that the clustering error of global minimizers decreases polynomially with the signal-to-noise ratio. The main concentration inequalities are derived under Gaussian or sub-Gaussian assumptions. Although these distributions cover many important settings, extending the results to heavier-tailed random vectors would require additional truncation or robustification techniques.

\section*{Acknowledgement}

H. Liu is supported in part by the National Natural Science Foundation of China (Grant NSFC-12301403, 72192830, 72192832). P. Wang is supported in part by the University of Macau under grants SRG2025-00043-FST and UMDF-TISF-I/2026/013/FST, and in part by the Macau Science and Technology Development Fund (FDCT) 0091/2025/ITP2. L. Balzano is supported in part by NSF award 2331590, the Charles Simonyi Endowment, and the University of Michigan Crosby award.


\bibliographystyle{abbrv}
\bibliography{mixture_model,icml2026/reference}

\newpage
\appendix
\onecolumn
\begin{center}
{\Large \bf Appendices}
\end{center}\vspace{-0.15in}
\par\noindent\rule{\textwidth}{1pt}

\section{Proofs for \Cref{sec:main}}

\subsection{Proof of \Cref{thm:lowerbound}} \label{sec:proof-lowerbound}
\begin{proof}
For ease of exposition, let $\bm A_{\Omega} \in \R^{m \times |\Omega|}$ be a matrix whose columns consist of $\bm a_i$ for each $i \in \Omega$ and $\bm A \in \R^{m \times M} $ be a matrix whose columns consist of $\bm a_i$ for all $i \in [M]$. Applying \Cref{lem:cov esti} to $\bm A\bm A^T$ with $\eta=\sqrt{M}$ yields that it holds with probability at least $1-2\exp(-2M)$ that
\begin{align}
\left\|\frac{1}{M} \bm A\bm A^T - \bm I_m \right\| \le \frac{9\left(\sqrt{m}+\sqrt{M}\right)}{\sqrt{M}} \; \le 18, \label{eq:aainormbound}
\end{align}
where the last inequality holds since we assume $M \geq m$. We then have 
\begin{align}\label{ineq:A-norm}
    \frac{1}{M} \|\bm A\|^2 = \left\|\frac{1}{M} \bm A\bm A^T\right\| \leq \left\|\frac{1}{M} \bm A\bm A^T - \bm I_m\right\| + \|\bm I_m\| \leq 19.
\end{align}
Let $\mathcal{N}_{\varepsilon} \subseteq \mathbb{S}^{m-1}$ be an $\varepsilon$-net of $\mathbb{S}^{m-1}$, i.e., for any $\bm x \in \mathbb{S}^{m-1}$, there exists $\bm x_0 \in \mathcal{N}_{\varepsilon}$ such that $\|\bm x - \bm x_0\| \le \varepsilon$. Using \citep[Corollary 4.2.13]{vershynin2018high}, the cardinality of the $\varepsilon$-net $\mathcal{N}_{\varepsilon}$ is
\begin{align}\label{eq0:lem cov A1}
\left|\mathcal{N}_{\varepsilon} \right| \le \left( \frac{2}{\varepsilon} + 1 \right)^m. 
\end{align} 
Let $\bm x \in \mathbb{S}^{m-1}$ be such that $\sigma_{\min}\left( \bm A_{\Omega}  \right) = \|\bm A_{\Omega}^T \bm x\|$ and $\bm x_0 \in \mathcal{N}_{\varepsilon}$ be such that $\|\bm x - \bm x_0\| \le \varepsilon$. Using the triangle inequality, we obtain
\begin{align}\label{eq5:lem cov A1}
\sigma_{\min}\left( \bm A_{\Omega}  \right) \ge \|\bm A_{\Omega}^T \bm x_0\| - \varepsilon\|\bm A_{\Omega}\| \ge \min_{\bm x \in \mathcal{N}_{\varepsilon}} \|\bm A_{\Omega}^T\bm x\| - \varepsilon\|\bm A_{\Omega}\| \ge \min_{\bm x \in \mathcal{N}_{\varepsilon}} \|\bm A_{\Omega}^T\bm x\| - \varepsilon\|\bm A\|. 
\end{align} 
We claim and prove below that it holds with probability defined below in \eqref{eq:prop spec 1} that 
\begin{align}\label{eq3:lem cov A1}  
& \min_{\bm x \in \mathcal{N}_{\varepsilon}} \|\bm A_{\Omega}^T\bm x\|^2 \geq (1 -\eta) \mu M \;,
\end{align}
where $\mu$ is defined in \eqref{eq:gamma mu}. Substituting this and \eqref{ineq:A-norm} into \eqref{eq5:lem cov A1} yields 
\begin{align*}
\sigma_{\min}\left( \bm A_{\Omega} \right) \ge \sqrt{(1 -\eta) \mu M} - \varepsilon \sqrt{19 M}.
\end{align*}
From here to get \eqref{eq:spec min}, we let $\varepsilon = \sqrt{\mu} \eta/ 11$, which gives us
\begin{align*}
        \sigma_{\min}\left( \bm A_{\Omega} \right) \ge \sqrt{\mu M}\left(\sqrt{1 -\eta} - \frac{\sqrt{19}}{11} \eta\right) \ge (1 -\eta) \sqrt{\mu M}
\end{align*}
where the last inequality holds because $\eta \leq 1/2$ implies $$
\sqrt{1 -\eta} - (1 -\eta) = \frac{(1 -\eta) - (1 -\eta)^2}{\sqrt{1 -\eta} + (1 -\eta)} = \eta \cdot \frac{1}{1 + 1 /\sqrt{1 -\eta}} \geq \frac{1}{1 +\sqrt{2}} \eta \geq  \frac{\sqrt{19}}{11} \eta.
$$
Using the union bound, we obtain the probability of the event. 

The rest of the proof is devoted to proving the claim \eqref{eq3:lem cov A1}. We fix $\bm x \in \mathcal{N}_{\epsilon}$ and let  
\begin{align}\label{eq2:lem cov A1 1}
\mathcal{A} := \left\{ i  \in [M]: \left(\bm a_i^T\bm x\right)^2 \le \gamma^2  \right\},\ \text{where}\ \gamma := \Phi^{-1}\left( \frac{1 + (1 - \eta)\kappa_1}{2} \right).
\end{align}
According to $\bm a_i  \overset{i.i.d.}{\sim} \mathcal{N}(\bm 0, \bm I)$ for each $i \in [M]$ and $\bm x \in \mathcal{N}_{\varepsilon}$, we have $\bm a_i^T\bm x \sim \mathcal{N}(0,1)$ for each $i \in [M]$. Therefore, we obtain 
\begin{align}\label{eq4:lem cov A1 1}
\P\left( \left(\bm a_i^T\bm x\right)^2 \le \gamma^2 \right) = \P\left( -\gamma \le \bm a_i^T\bm x \le \gamma \right) = 2\Phi\left( \gamma\right) - 1 = (1 - \eta)\kappa_1.
\end{align}
It follows from \eqref{eq2:lem cov A1 1} that  $|\mathcal{A}| = \sum_{i =1}^M \mathbb{I}\left\{ \left(\bm a_i^T\bm x\right)^2 \le \gamma^2  \right\}$, 
where $\mathbb{I}\left\{ \left(\bm a_i^T\bm x\right)^2 \le \gamma^2  \right\} = 1$ if $\left(\bm a_i^T\bm x\right)^2 \le \gamma^2$ and $0$ otherwise. This, together with \eqref{eq4:lem cov A1 1}, implies 
\begin{align*}
\E\left[ |\mathcal{A}| \right] = (1 - \eta)\kappa_1 M,\quad \mathrm{Var}\left( |\mathcal{A}| \right) \le (1 - \eta)\kappa_1 M.
\end{align*}
Applying Bernstein's inequality (see, e.g., \citep[Theorem 2.8.4]{vershynin2018high}) to $|\mathcal{A}|$, which is the sum of i.i.d. Bernoulli random variables, yields that 
\begin{align}\label{ineq:bernstein bernoulli}
\P\left( \Big| |\mathcal{A}| - (1 - \eta)\kappa_1 M\Big| \ge \eta\kappa_1 M \right) & \le 2\exp\left(-\frac{(\eta\kappa_1 M)^2}{2\left((1 - \eta)\kappa_1 M + \eta\kappa_1 M/{3} \right)}\right)
\le 2\exp\left( -\frac{ \eta^2\kappa_1 M}{2} \right). 
\end{align}
This implies that it holds with probability at least $1-2\exp\left( -{\eta^2\kappa_1 M}/{2} \right)$ that $|\mathcal{A}| \le \kappa_1 M$. Since $|\Omega| \ge \kappa_1 M$, we obtain 
\begin{align}\label{eq:boundsmallerset 1}
\|\bm A_{\Omega}^T\bm x\|^2 = \sum_{i \in \Omega} \left(\bm a_i^T\bm x\right)^2 \ge \sum_{i \in \mathcal{A}} \left(\bm a_i^T\bm x\right)^2 = \sum_{ i=1 }^M  \left(\bm a_i^T\bm x\right)^2 \mathbb{I}\left\{  \left(\bm a_i^T\bm x\right)^2 \le \gamma^2 \right\}, 
\end{align}
where the last equality follows from \eqref{eq2:lem cov A1 1}. Since $\bm a_i^T\bm x \overset{i.i.d.}{\sim} \mathcal{N}(0,1)$ for each $i \in [M]$, we compute
\begin{align}\label{eq6:lem cov A1 1}
\mu & := \E\left[ \left(\bm a_i^T\bm x\right)^2 \mathbb{I}\left\{  \left(\bm a_i^T\bm x\right)^2 \le \gamma^2 \right\} \right] = \sqrt{\frac{2}{\pi}}\int_0^{\gamma} x^2 \exp\left( -\frac{x^2}{2} \right) dx \notag \\
& = -\sqrt{\frac{2}{\pi}} \left( x\exp\left( -\frac{x^2}{2} \right) \bigg|_0^{\gamma} - \int_0^{\gamma} \exp\left( -\frac{x^2}{2} \right) dx \right) = (1-\eta)\kappa_1 - \sqrt{\frac{2\gamma^2}{\pi}}\exp\left( -\frac{\gamma^2}{2} \right),
\end{align}
where the last equality uses the definition of $\gamma$ in \eqref{eq2:lem cov A1 1}. 
Since $\bm a_i^T\bm x$ is Gaussian, $\left(\bm a_i^T\bm x\right)^2$ is subexponential. Applying Hoeffding's inequality for bounded distribution   \citep[Theorem 2.2.6]{vershynin2018high} yields
\begin{align}\label{ineq:subexponential-lowerbound bounded}
\P\left( \left|\sum_{i=1}^M \left(\bm a_i^T\bm x\right)^2 \mathbb{I}\left\{  \left(\bm a_i^T\bm x\right)^2 \le \gamma^2 \right\} - M \mu \right| \ge \eta M \mu \right) \le 2\exp\left( - \frac{2\eta^2\mu^2 M}{\gamma^4} \right). 
\end{align}
However, the above bound is invalid when $\gamma$ goes to infinity, so we have an alternative bound. Applying Subexponential Bernstein inequality \citep[Corollary 2.8.3]{vershynin2018high} yields
\begin{align}\label{ineq:subexponential-lowerbound}
\P\left( \left|\sum_{i=1}^M \left(\bm a_i^T\bm x\right)^2 \mathbb{I}\left\{  \left(\bm a_i^T\bm x\right)^2 \le \gamma^2 \right\} - M \mu \right| \ge \eta M \mu \right) \le 2\exp\left( - \frac{c' \cdot\eta^2\mu^2 M}{K^2} \right), 
\end{align}
where $c'>0$ is an absolute constant and $K = \left\| \bm z^2 \right\|_{\psi_1} \geq \left\| \bm z^2 \mathbb{I}\left\{  |\bm z| \le \gamma \right\} \right\|_{\psi_1}$ for $z \sim \mathcal{N}(0,1)$. Let $c = c' / K^2$, which is also an absolute constant.
This implies that it holds with probability at least $1-2\exp\left( - { c\cdot\eta^2\mu^2 M} \right)$ that 
\begin{align*}
 \sum_{i = 1}^M \left(\bm a_i^T\bm x\right)^2 \mathbb{I}\left\{  \left(\bm a_i^T\bm x\right)^2 \le \gamma^2 \right\}  \ge  (1 - \eta) M \mu, 
\end{align*}
This, together with 
\eqref{eq:boundsmallerset 1} 
implies \eqref{eq3:lem cov A1}. 
Finally, using the union bound and \eqref{eq0:lem cov A1} yields that \eqref{eq3:lem cov A1} holds with probability at least 
\begin{align}\label{eq:prop spec 1}
& 1 - 2 \left( \frac{22}{\eta\sqrt{\mu }} + 1 \right)^m \left( \exp\left( -\frac{\eta^2 \kappa_1 M}{2} \right)  + \min\left\{ \exp\left( - c \cdot\eta^2\mu^2 M \right), \exp\left( - \frac{2\eta^2\mu^2 M}{\gamma^4} \right) \right\} \right).
\end{align}
\end{proof}

\subsection{Lower Bound of $\kappa_1$ in \Cref{thm:lowerbound}}\label{app:kappa}

Formally, \Cref{thm:lowerbound} is stated for any $\kappa_1 \in (0,1]$. What depends on $\kappa_1$ is whether the probability bound in \eqref{eq:prob} is non-vacuous and quantitatively strong. We write the probability in \eqref{eq:prob} schematically as
\[
1-2\left(1+\frac{22}{\eta\sqrt{\mu}}\right)^m
\Bigl(\exp\left({-c\,\eta^2\kappa_1 M}\right) + \exp\left({-c\,\eta^2\mu^2 M}\right)\Bigr) - 2\exp\left({-2M}\right),
\]
A convenient sufficient condition ensuring that the above probability is positive is
\[
m\log\!\Bigl(1+\frac{22}{\eta\sqrt{\mu}}\Bigr)
\ll
\eta^2\kappa_1 M,
\quad
m\log\!\Bigl(1+\frac{22}{\eta\sqrt{\mu}}\Bigr)
\ll
\eta^2\mu^2 M.
\]
where $\mu=\mu(\kappa_1,\eta)$ is defined in \Cref{thm:lowerbound}. Thus, the valid regime for $\kappa_1$ is determined by the requirement that these exponents dominate the covering-number term. Using
\[
\gamma=\Phi^{-1}\!\left(\frac{1+(1-\eta)\kappa_1}{2}\right)=\Theta(\kappa_1),
\quad
\mu=\sqrt{\frac{2}{\pi}}\int_0^\gamma x^2 e^{-x^2/2}\,dx=\Theta(\kappa_1^3),
\]
we obtain the rough scaling
\[
\frac{M}{m}\gtrsim \kappa_1^{-6}\log\frac{1}{\kappa_1},
\]
up to constants and the dependence on $\eta$. Equivalently, for a fixed ratio $M/m$, the smallest admissible $\kappa_1$ scales roughly like
\[
\kappa_1 \gtrsim \Bigl(\frac{m\log(M/m)}{M}\Bigr)^{1/6}.
\]
This explains why \Cref{exam:parameter} shows an apparent lower bound on $\kappa_1$: those numbers are not theorem-level hard thresholds, but illustrative choices ensuring that the probability bound is already numerically strong (e.g., at least $0.999$). 

\subsection{Proof of \Cref{thm:upperbound}} \label{sec:proof-upperbound} 

\begin{proof}
Using the $\varepsilon$-net and \citep[Exercise 4.4.3]{vershynin2018high}, we have 
\begin{align}\label{eq1:prop cov A}
     \left\| \sum_{i \in \Omega} \bm a_i\bm a_i^T  \right\| \le \frac{1}{1-2\varepsilon} \max_{\bm x \in \mathcal{N}_{\varepsilon}}  \sum_{i \in \Omega} \left( \bm a_i^T\bm x\right)^2. 
\end{align}
We fix $\bm x \in \mathcal{N}_{\varepsilon}$ and let 
\begin{align}\label{eq2:prop cov A 3}
\mathcal{A} := \left\{ i  \in [M]: \left(\bm a_i^T\bm x\right)^2 \ge \gamma^2  \right\},\ \text{where}\ \gamma := \Phi^{-1}\left( 1 - \frac{(1+\eta)\kappa_2}{2} \right).
\end{align} 
According to $\bm a_i \overset{i.i.d.}{\sim} \mathcal{N}(\bm 0, \bm I)$ and $\bm x \in \mathcal{N}_{\varepsilon}$, we have $\bm a_i^T\bm x  \overset{i.i.d.}{\sim} \mathcal{N}(0,1)$ for each $i \in [M]$. Therefore, we obtain for each $i \in [M]$,
\begin{align}\label{eq3:prop cov A 3}
\P\left( \left(\bm a_i^T\bm x\right)^2 \ge \gamma^2 \right) = 2\P\left( \bm a_i^T\bm x \ge \gamma \right) = 2\left( 1 - \Phi(\gamma) \right) = (1 + \eta)\kappa_2.
\end{align}  
It follows from \eqref{eq2:prop cov A 3} that  $|\mathcal{A}| = \sum_{i =1}^M \mathbb{I}\left\{ \left(\bm a_i^T\bm x\right)^2 \ge \gamma^2  \right\}$, where $\mathbb{I}\left\{ \left(\bm a_i^T\bm x\right)^2 \ge \gamma^2  \right\} = 1$ if $\left(\bm a_i^T\bm x\right)^2 \ge \gamma^2$ and $0$ otherwise. This, together with \eqref{eq3:prop cov A 3}, implies 
\begin{align}\label{eq4:prop cov A 3}
    \E\left[ |\mathcal{A}| \right] = (1 + \eta)\kappa_2 M,\quad \mathrm{Var}\left( |\mathcal{A}| \right) \le (1 + \eta)\kappa_2 M.  
\end{align}
Applying Bernstein's inequality (see, e.g., \citep[Theorem 2.8.4]{vershynin2018high}) to $|\mathcal{A}|$, which is the sum of i.i.d. Bernoulli random variables, yields 
\begin{align*}
\P\left( \left| |\mathcal{A}| - (1 +\eta)\kappa_2 M \right| \ge \eta\kappa_2 M \right) & \le 2\exp\left(-\frac{\eta^2 \kappa_2^2 M^2}{2\left( (1+\eta)\kappa_2 M + \eta \kappa_2 M/{3} \right)}\right) \le 2 \exp \left(-\frac{\eta^2 \kappa_2 M}{4}\right),
\end{align*} 
where the last inequality holds because $\eta \le 1/2$. This implies that it holds with probability at least $1-2\exp\left(-\eta^2\kappa_2M/4 \right)$ that $|\mathcal{A}| \ge \kappa_2 M$.  Since $|\Omega| \le \kappa_2 M$, we obtain 
\begin{align}\label{eq5:prop cov A 3}
\|\bm A_{\Omega}^T\bm x\|^2 = \sum_{i \in \Omega} \left(\bm a_i^T\bm x\right)^2 \le \sum_{i \in \mathcal{A}} \left(\bm a_i^T\bm x\right)^2 = \sum_{ i=1 }^M  \left(\bm a_i^T\bm x\right)^2 \mathbb{I}\left\{  \left(\bm a_i^T\bm x\right)^2 \ge \gamma^2 \right\}, 
\end{align}
where the last equality follows from \eqref{eq2:prop cov A 3}. Since $\bm a_i^T\bm x \overset{i.i.d.}{\sim} \mathcal{N}(0,1)$ for each $i \in [M]$, we compute
\begin{align}\label{eq6:prop cov A 3} 
\mu & := \E\left[ \left(\bm a_i^T\bm x\right)^2 \mathbb{I}\left\{ \left(\bm a_i^T\bm x\right)^2 \ge \gamma^2 \right\} \right] = \sqrt{\frac{2}{\pi}}\int_{\gamma }^\infty x^2 \exp\left( -\frac{x^2}{2} \right) dx \notag \\ 
& = -\sqrt{\frac{2}{\pi}} \left( x\exp\left( -\frac{x^2}{2} \right) \bigg|_{\gamma}^\infty - \int_{\gamma}^\infty \exp\left( -\frac{x^2}{2} \right) dx \right) = (1+\eta)\kappa_2 + \sqrt{\frac{2\gamma^2}{\pi}}\exp\left( -\frac{\gamma^2}{2} \right), 
\end{align}
where the last equality uses the definition of $\gamma$ in \eqref{eq2:prop cov A 3}. Similar to \eqref{ineq:subexponential-lowerbound}, Subexponential Bernstein inequality \citep[Corollary 2.9.2]{vershynin2018high} yields
\begin{align}\label{ineq:subexponential-lowerbound 3}
\P\left( \left|\sum_{i=1}^M \left(\bm a_i^T\bm x\right)^2 \mathbb{I}\left\{  \left(\bm a_i^T\bm x\right)^2 \ge \gamma^2 \right\} - M \mu \right| \ge \eta M \mu \right) \le 2\exp\left( - c_2 \cdot\eta^2\mu^2 M \right), 
\end{align}
for some absolute constant $c_2 > 0$.
This implies that, with probability at least $1 - 2\exp\left( - c_2 \cdot\eta^2\mu^2 M \right)$, 
\begin{align*}
    \sum_{i = 1}^M \left(\bm a_i^T\bm x\right)^2 \mathbb{I}\left\{ \left(  \bm a_i^T\bm x\right)^2 \ge \gamma^2 \right\} \le (1+\eta) \mu M. 
\end{align*}
This, together with \eqref{eq1:prop cov A} and \eqref{eq5:prop cov A 3}, implies 
$$
\left\| \sum_{i \in \Omega} \bm a_i\bm a_i^T  \right\| \le \frac{(1+\eta) \mu M}{1-2\varepsilon}. 
$$ 
Setting $\varepsilon = \eta/3$, since $\eta \in (0, 1/2)$, we have $\frac{1}{1-2\varepsilon} \leq 1 +\eta$, which together with the above inequality, yields \eqref{eq:spec max}. Using \eqref{eq0:lem cov A1} and the union bound yields that \eqref{eq:spec max} holds with probability at least
\begin{align*}
    1 - 2\left( \frac{6}{\eta} + 1 \right)^m\left( \exp \left(-\frac{\eta^2\kappa_2 M}{6}\right)  + \exp\left(- c_2 \cdot\eta^2\mu^2 M \right) \right). 
\end{align*}
\end{proof}

\subsection{Proof of \Cref{coro:lowerboundtightness}}\label{app subsec:coro}

\paragraph{Proof of Part (i) in \Cref{coro:lowerboundtightness}.} We fix some $\bm x_0 \in \mathbb{S}^{m-1}$ and let  
\begin{align}\label{eq2:lem cov A1 2}
\widetilde{\mathcal{A}} := \left\{ i  \in [M]: \left(\bm a_i^T\bm x_0 \right)^2 \le \widetilde{\gamma}^2  \right\},\ \text{where}\ \widetilde{\gamma} := \Phi^{-1}\left( \frac{1 + (1 + \eta)\kappa_1}{2} \right).
\end{align}
Similar to \eqref{eq4:lem cov A1 1}, we know that \begin{align}\label{eq4:lem cov A1 tilde}
\P\left( \left(\bm a_i^T\bm x_0\right)^2 \le \widetilde{\gamma}^2 \right) = \P\left( -\widetilde{\gamma} \le \bm a_i^T\bm x_0 \le \widetilde{\gamma} \right) = 2\Phi\left( \widetilde{\gamma}\right) - 1 = (1 + \eta)\kappa_1,
\end{align}
which implies that $\E\left[ |\widetilde{\mathcal{A}}| \right] = (1 + \eta)\kappa_1 M$ and $\mathrm{Var}\left( |\widetilde{\mathcal{A}}| \right) \le (1 + \eta)\kappa_1 M.$ Then, similar to \eqref{ineq:bernstein bernoulli}, we have
\begin{align}\label{ineq:bernstein bernoulli tilde}
\P\left( \Big| |\widetilde{\mathcal{A}}| - (1 + \eta)\kappa_1 M\Big| \ge \eta\kappa_1 M \right) & \le 2\exp\left(-\frac{(\eta\kappa_1 M)^2}{2\left((1 + \eta)\kappa_1 M + \eta\kappa_1 M/{3} \right)}\right)
\le 2\exp\left( -\frac{ \eta^2\kappa_1 M}{4} \right). 
\end{align}
This implies that it holds with probability at least $1-2\exp\left( -{\eta^2\kappa_1 M}/{4} \right)$ that $|\widetilde{\mathcal{A}}| \ge \kappa_1 M$. By choosing $\Omega = \widetilde{\mathcal{A}}$, we obtain 
\begin{align}\label{eq:boundsmallerset tilde}
\lambda_{\min}\left( \sum_{i \in \Omega} \bm a_i\bm a_i^T\right) \leq \|\bm A_{\Omega}^T\bm x_0\|^2 = \sum_{i \in \widetilde{\mathcal{A}}} \left(\bm a_i^T\bm x_0\right)^2 = \sum_{ i=1 }^M  \left(\bm a_i^T\bm x_0\right)^2 \mathbb{I}\left\{  \left(\bm a_i^T\bm x_0\right)^2 \le \widetilde{\gamma}^2 \right\}, 
\end{align}
where the last equality follows from \eqref{eq2:lem cov A1 2}. Since $\bm a_i^T\bm x_0 \overset{i.i.d.}{\sim} \mathcal{N}(0,1)$ for each $i \in [M]$, we compute
\begin{align}\label{eq6:lem cov A1}
\widetilde{\mu} & := \E\left[ \left(\bm a_i^T\bm x_0\right)^2 \mathbb{I}\left\{  \left(\bm a_i^T\bm x_0\right)^2 \le \widetilde{\gamma}^2 \right\} \right] = \sqrt{\frac{2}{\pi}}\int_0^{\widetilde{\gamma}} x^2 \exp\left( -\frac{x^2}{2} \right) dx \notag \\
& = -\sqrt{\frac{2}{\pi}} \left( x\exp\left( -\frac{x^2}{2} \right) \bigg|_0^{\widetilde{\gamma}} - \int_0^{\widetilde{\gamma}} \exp\left( -\frac{x^2}{2} \right) dx \right) = (1+\eta)\kappa_1 - \sqrt{\frac{2\widetilde{\gamma}^2}{\pi}}\exp\left( -\frac{\widetilde{\gamma}^2}{2} \right),
\end{align}
where the last equality uses the definition of $\widetilde{\gamma}$ in \eqref{eq2:lem cov A1 2}. The same concentration inequality as \eqref{ineq:subexponential-lowerbound} implies that it holds with probability at least $1-2\exp\left( - { c\cdot\eta^2 \widetilde{\mu}^2 M} \right)$ that 
\begin{align*}
 \sum_{i = 1}^M \left(\bm a_i^T\bm x_0 \right)^2 \mathbb{I}\left\{  \left(\bm a_i^T\bm x_0 \right)^2 \le \widetilde{\gamma}^2 \right\}  \le  (1 + \eta) M \widetilde{\mu}, 
\end{align*}
Thus, the result holds with probability at least 
$$1- 2\exp\left( -\frac{ \eta^2\kappa_1 M}{4} \right) - 2\exp\left( - { c\cdot\eta^2 \widetilde{\mu}^2 M} \right).$$

\paragraph{Proof of Part (ii) in \Cref{coro:lowerboundtightness}.}

For any fixed $\bm x_0 \in \mathbb{S}^{m-1}$, let 
\begin{align}\label{eq2:prop cov A}
\widetilde{\mathcal{A}} := \left\{ i  \in [M]: \left(\bm a_i^T\bm x_0 \right)^2 \ge \widetilde{\gamma}^2  \right\},\ \text{where}\ \widetilde{\gamma} := \Phi^{-1}\left( 1 - \frac{(1 - \eta)\kappa_2}{2} \right).
\end{align} 
According to $\bm a_i \overset{i.i.d.}{\sim} \mathcal{N}(\bm 0, \bm I)$ and $\bm x_0 \in \mathbb{S}^{m-1}$, we have $\bm a_i^T\bm x_0  \overset{i.i.d.}{\sim} \mathcal{N}(0,1)$ for each $i \in [M]$. Therefore, we obtain for each $i \in [M]$,
\begin{align}\label{eq3:prop cov A}
\P\left( \left(\bm a_i^T\bm x_0\right)^2 \ge \widetilde{\gamma}^2 \right) = 2\P\left( \bm a_i^T\bm x_0 \ge \widetilde{\gamma}\right) = 2\left( 1 - \Phi(\widetilde{\gamma}) \right) = (1 - \eta)\kappa_2.
\end{align}  
It follows from \eqref{eq2:prop cov A} that  $|\widetilde{\mathcal{A}}| = \sum_{i =1}^M \mathbb{I}\left\{ \left(\bm a_i^T\bm x_0\right)^2 \ge \widetilde{\gamma}^2  \right\}$, where $\mathbb{I}\left\{ \left(\bm a_i^T\bm x_0\right)^2 \ge \widetilde{\gamma}^2  \right\} = 1$ if $\left(\bm a_i^T\bm x_0\right)^2 \ge \widetilde{\gamma}^2$ and $0$ otherwise. This, together with \eqref{eq3:prop cov A}, implies 
\begin{align}\label{eq4:prop cov A}
    \E\left[ |\widetilde{\mathcal{A}}| \right] = (1 - \eta)\kappa_2 M,\quad \mathrm{Var}\left( |\widetilde{\mathcal{A}}| \right) \le (1 - \eta)\kappa_2 M.  
\end{align}
Applying Bernstein's inequality (see, e.g., \citep[Theorem 2.8.4]{vershynin2018high}) to $|\widetilde{\mathcal{A}}|$, which is the sum of i.i.d. Bernoulli random variables, yields 
\begin{align*}
\P\left( \left| |\widetilde{\mathcal{A}}| - (1 -\eta)\kappa_2 M \right| \ge \eta\kappa_2 M \right) & \le 2\exp\left(-\frac{\eta^2 \kappa_2^2 M^2}{2\left( (1-\eta)\kappa_2 M + \eta \kappa_2 M/{3} \right)}\right) \le 2 \exp \left(-\frac{\eta^2 \kappa_2 M}{2}\right). 
\end{align*} 
This implies that it holds with probability at least $1-2\exp\left(-\eta^2\kappa_2M/2 \right)$ that $|\widetilde{\mathcal{A}}| \le \kappa_2 M$. By choosing $\Omega = \widetilde{\mathcal{A}}$, we obtain 
\begin{align}\label{eq5:prop cov A 4}
\left\| \sum_{i \in \Omega} \bm a_i\bm a_i^T\right\| \ge \|\bm A_{\Omega}^T\bm x_0\|^2 = \sum_{i \in \widetilde{\mathcal{A}}} \left(\bm a_i^T\bm x_0\right)^2 = \sum_{ i=1 }^M  \left(\bm a_i^T\bm x_0\right)^2 \mathbb{I}\left\{  \left(\bm a_i^T\bm x_0\right)^2 \ge \widetilde{\gamma}^2 \right\},
\end{align}
Just similar to \eqref{eq6:prop cov A 3} and \eqref{ineq:subexponential-lowerbound 3}, we have
\begin{align}\label{eq6:prop cov A 4} 
\widetilde{\mu} & := \E\left[ \left(\bm a_i^T\bm x_0\right)^2 \mathbb{I}\left\{ \left(\bm a_i^T\bm x_0\right)^2 \ge \widetilde{\gamma}^2 \right\} \right]  = (1 - \eta)\kappa_2 + \sqrt{\frac{2\widetilde{\gamma}^2}{\pi}}\exp\left( -\frac{\widetilde{\gamma}^2}{2} \right). 
\end{align}
and 
\begin{align}\label{ineq:subexponential-lowerbound 4}
\P\left( \left|\sum_{i=1}^M \left(\bm a_i^T\bm x_0\right)^2 \mathbb{I}\left\{  \left(\bm a_i^T\bm x_0\right)^2 \ge \widetilde{\gamma}^2 \right\} - M \widetilde{\mu} \right| \ge \eta M \widetilde{\mu} \right) \le 2\exp\left( - c_2 \cdot\eta^2\widetilde{\mu}^2 M \right), 
\end{align}
Thus, the result holds with probability at least 
$$1- 2\exp\left( -\frac{ \eta^2\kappa_2 M}{2} \right) - 2\exp\left( - { c_2\cdot\eta^2 \widetilde{\mu}^2 M} \right).$$

\subsection{Weakness of Simple Union Bound}\label{app:union}

To address the difficulty caused by the dependence induced by $\Omega$, a natural idea is to use a union bound over all possible choices of $\Omega$ based on \Cref{lem:cov esti}.  
However, this bound is only valid when $\kappa_1 > \exp\left({-{1}/{4}}\right) \approx 0.78$. That is, $\kappa_1$ needs to be very close to 1. 

Now, we provide evidence to support the above claim. For any fixed set $\Omega$ (independent of $\{\bm a_i\}_{i=1}^M$) with cardinality $|\Omega| = \kappa_1 M$, it follows from \Cref{lem:cov esti} that 
\begin{align}\label{eq1:cov}
\left\| \frac{1}{\kappa_1 M} \sum_{i \in \Omega} \bm a_i\bm a_i^T  -  \bm I_m \right\| \le \frac{9(\sqrt{m}+\eta)}{\sqrt{\kappa_1 M}}
\end{align} 
holds with probability at least $1-2\exp\left(-2\eta^2\right)$. For this fixed $\Omega$, applying Weyl's inequality to \eqref{eq1:cov} yields 
\begin{align}\label{concentrate:omega}
\lambda_{\min}\left( \sum_{i \in \Omega} \bm a_i\bm a_i^T\right) \ge \kappa_1 M \left(1 - \frac{9(\sqrt{m}+\eta)}{\sqrt{\kappa_1 M}}\right).
\end{align}
Obviously, a necessary condition for the lower bound in \eqref{concentrate:omega} to be meaningful is $\eta < \sqrt{\kappa_1 M} / 9$. Therefore, the guaranteed probability that \eqref{concentrate:omega} holds for each fixed $\Omega$ is less than $ 1-2\exp\left(-2\kappa_1 M / 81\right)$. 

On the other hand, the number of $\Omega$ with cardinality equal to $\kappa_1 M$ is given by
\begin{align*}
    C_{M}^{\kappa_1 M} 
    = \frac{M!}{(\kappa_1 M)! ((1 - \kappa_1)M)!}
    \approx & \frac{\sqrt{2\pi M} \left(\frac{M}{e} \right)^M}{\sqrt{2\pi \kappa_1 M} \left(\frac{\kappa_1 M}{e} \right)^{\kappa_1 M} \sqrt{2\pi (1 - \kappa_1)M} \left(\frac{(1 - \kappa_1)M}{e} \right)^{(1 - \kappa_1)M}} \\
    = & \frac{ \exp\left(M \left(\kappa_1 \log\frac{1}{\kappa_1} + (1 - \kappa_1) \log\frac{1}{1 - \kappa_1}\right)  \right)}{\sqrt{2\pi\kappa_1(1 - \kappa_1) M}}, 
\end{align*}
where the approximation holds because of the Stirling approximation $n! \approx \sqrt{2\pi n} \left(n/e\right)^n $. By taking the union bound, the probability becomes 
$$
1-\frac{\sqrt{2}\exp\left(\left(\kappa_1 \log\frac{1}{\kappa_1} + (1 - \kappa_1) \log\frac{1}{1 - \kappa_1} - \frac{2}{81} \kappa_1 \right) M \right)}{\sqrt{\pi\kappa_1(1 - \kappa_1) M}}. 
$$
To ensure the probability is nonnegative, we need at least $\log\left({1}/{\kappa_1}\right) <  2/{81}$, i.e., $\kappa_1 > \exp\left({-{2}/{81}}\right) \approx 0.98$, which is close to 1.

\paragraph{Tightness of the above analysis.} One may question the tightness of the bound on $\kappa_1$ derived above, as the bound in \Cref{lem:cov esti} itself may not be sharp. Because \Cref{lem:cov esti} only provides the lower bound of the probability that \eqref{eq1:cov} holds, the condition $\kappa_1 \ge 0.98$ is a sufficient condition to apply the union bound.  To derive a necessary condition, we instead use the central limit theory to get an upper bound on the probability that \eqref{eq1:cov} holds, and then compute the total probability in the union bound when $m=1$. This analysis gives the necessary condition $\kappa_1 \ge 0.78$. Therefore, the requirement that $\kappa_1 \ge 0.98$ is nearly tight. Below, we present the detailed analysis to derive the necessary condition. 

We first consider a special case where $m = 1$. The central limit theory yields that
\begin{align*}
\frac{1}{\sqrt{2\kappa_1 M}} \sum_{i \in \Omega} (a_i^2 -1)  \overset{d}{\to} \mathcal{N}(0, 1). 
\end{align*}
Thus, for any fixed $\eta \in (0, 1/2]$, there exists some $M_0 >0$, such that for all $M \geq M_0$,
$$
\left| \mathbb{P} \left( \frac{1}{\sqrt{2\kappa_1 M}} \sum_{i \in \Omega} (a_i^2 -1) \geq \eta \right) - (1 -\Phi(\eta)) \right| \leq \frac{1 -\Phi(\eta)}{2}.
$$
That is, for all $M \geq M_0$,
$$
\mathbb{P} \left( \frac{1}{\sqrt{2\kappa_1 M}} \sum_{i \in \Omega} (a_i^2 -1) \geq \eta \right) \geq \frac{1 -\Phi(\eta)}{2}.
$$
According to Mills' ratio of the standard Gaussian distribution \citep{gordon1941values}, we have
$$
1 -\Phi(\eta) \geq \frac{\eta}{\sqrt{2\pi}(1 + \eta^2)} \exp\left(\frac{-\eta^2}{2} \right).
$$
Thus, we have
\begin{align*}
    \ \mathbb{P} \left( \left|\frac{1}{\sqrt{2\kappa_1 M}} \sum_{i \in \Omega} (a_i^2 -1) \right| 
\leq \eta \right)
\leq \ 1 - \frac{\eta}{\sqrt{2\pi}(1 + \eta^2)} \exp\left(\frac{-\eta^2}{2} \right).
\end{align*}
Therefore, the lower bound of the probability provided in \Cref{lem:cov esti} is tight up to a ratio $1/\eta$.

In order to get a valid lower bound, we need to choose $\eta \leq \sqrt{ {\kappa_1 M}/{2}}$, then we have 
\begin{align*}
\frac{\eta}{\sqrt{2\pi}(1 + \eta^2)} \exp\left(\frac{-\eta^2}{2} \right) \ge  \frac{1}{2\sqrt{\pi \kappa_1 M}} \exp\left(\frac{-\kappa_1 M}{4} \right).    
\end{align*}
By taking the union bound, the probability is at most $1-\frac{1}{2\sqrt{2}\pi M}\exp\left(\left(\kappa_1 \log\frac{1}{\kappa_1} + (1 - \kappa_1) \log\frac{1}{1 - \kappa_1} - \frac{1}{4} \kappa_1 \right) M \right). $
To ensure the probability is nonnegative when $M$ is large, we need at least $\log\left({1}/{\kappa_1}\right) <  1/{4}$, i.e., $\kappa_1 > \exp\left({-{1}/{4}}\right) \approx 0.78$. 

\smallskip 
\begin{remark}
    We can also consider the complementary set $\overline{\Omega} = [M] \setminus \Omega$ and compute an upper bound on 
    $\lambda_{\max}\left( \sum_{i \in \overline{\Omega}} \bm a_i\bm a_i^T \right)$ to derive a lower bound for 
    $\lambda_{\min}\left( \sum_{i \in \Omega} \bm a_i\bm a_i^T \right)$. 
    However, this approach leads to the same conclusion.
\end{remark}

\subsection{Proof of \Cref{thm:lowerbound:general}} \label{sec:proof-lowerbound-general}


\begin{proof}

We prove only part~(i), since the proof of part~(ii) is analogous. Using \citep[Corollary 4.4.7]{vershynin2018high} and the fact that the random vectors are isotropic sub-Gaussian with uniformly bounded one-dimensional \(\psi_2\)-norm, we obtain that it holds with probability at least $1 - 2\exp(-t^2)$ that  
$$
\|\bm A\| \leq CK(\sqrt{m} + \sqrt{M} + t),
$$
where $C$ is an absolute constant and $K=\|\bm A_{ij}\|_{\psi_2}$. By choosing $t = \sqrt{M}$, with probability at least $1 - 2\exp(-M)$, we have
$$
\|\bm A\| \leq 3CK\sqrt{M}.
$$
Similar to \eqref{eq5:lem cov A1}, we have
\begin{align}\label{eq5:lem cov A1 5}
\sigma_{\min}\left( \bm A_{\Omega}  \right) \ge \min_{\bm x \in \mathcal{N}_{\varepsilon}} \|\bm A_{\Omega}^T\bm x\| - \varepsilon\|\bm A\| \ge \min_{\bm x \in \mathcal{N}_{\varepsilon}} \|\bm A_{\Omega}^T\bm x\| - 3CK\sqrt{M}\varepsilon. 
\end{align} 
We claim and prove below that it holds with probability defined below in \eqref{eq:prop spec 5} that 
\begin{align}\label{eq3:lem cov A1 5}  
& \min_{\bm x \in \mathcal{N}_{\varepsilon}} \|\bm A_{\Omega}^T\bm x\|^2 \geq (1 -\eta) \mu M,
\end{align}
where $\mu$ is defined in \eqref{eq:gamma sub 1}. Substituting this into \eqref{eq5:lem cov A1 5} and choosing $\varepsilon = \sqrt{\mu} \eta/ (8c_3)$ where $c_3 = CK$ yields 
\begin{align*}
\sigma_{\min}\left( \bm A_{\Omega} \right) \ge \sqrt{(1 -\eta) \mu M} - 3\eta \sqrt{\mu M} / 8 \ge (1 -\eta) \sqrt{\mu M},
\end{align*}
where the last inequality holds because $\eta \leq 1/2$ implies $$
\sqrt{1 -\eta} - (1 -\eta) = \frac{(1 -\eta) - (1 -\eta)^2}{\sqrt{1 -\eta} + (1 -\eta)} = \eta \cdot \frac{1}{1 + 1 /\sqrt{1 -\eta}} \geq \frac{1}{1 +\sqrt{2}} \eta \geq  \frac{3}{8} \eta.
$$
Using the union bound, we obtain the probability of the event. 

The rest of the proof is devoted to proving the claim \eqref{eq3:lem cov A1 5}. We fix $\bm x \in \mathcal{N}_{\epsilon}$ and let  
\begin{align}\label{eq2:lem cov A1}
\mathcal{A} := \left\{ i  \in [M]: \left(\bm a_i^T\bm x\right)^2 \le \gamma^2  \right\},\ \text{where}\ \gamma := \sup\{z\ge0:\Psi_1(z)\le(1-\eta)\kappa_3\}.
\end{align}
According to the definition of $\Psi_1$ in \eqref{eq:Psi1},  we know that
\begin{align}\label{eq4:lem cov A1}
\P\left( \left(\bm a_i^T\bm x\right)^2 \le \gamma^2 \right) = \P\left( -\gamma \le \bm a_i^T\bm x \le \gamma \right) = P_x(\gamma)  - P_x(-\gamma) \le (1-\eta)\kappa_3.
\end{align}
It follows from \eqref{eq2:lem cov A1} that  $|\mathcal{A}| = \sum_{i =1}^M \mathbb{I}\left\{ \left(\bm a_i^T\bm x\right)^2 \le \gamma^2  \right\}$, 
where $\mathbb{I}\left\{ \left(\bm a_i^T\bm x\right)^2 \le \gamma^2  \right\} = 1$ if $\left(\bm a_i^T\bm x\right)^2 \le \gamma^2$ and $0$ otherwise. This, together with \eqref{eq4:lem cov A1}, implies 
\begin{align*}
\E\left[ |\mathcal{A}| \right] \leq (1-\eta)\kappa_3 M ,\quad \mathrm{Var}\left( |\mathcal{A}| \right) \le (1-\eta)\kappa_3 M.
\end{align*}
Applying Bernstein's inequality (see, e.g., \citep[Theorem 2.8.4]{vershynin2018high}) to $|\mathcal{A}|$, which is the sum of i.i.d. Bernoulli random variables, yields that 
\begin{align*}
\P\left( \left| |\mathcal{A}| - \E\left[ |\mathcal{A}| \right] \right| \ge \eta \kappa_3 M \right) & \le 2\exp\left(-\frac{(\eta\kappa_3 M)^2}{2\left( (1 - \eta)\kappa_3 M + \kappa_3 M/{3} \right)}\right)
\le 2\exp\left( -\frac{\eta^2\kappa_3 M}{2} \right). 
\end{align*}
This implies that it holds with probability at least $1-2\exp\left( -{\eta^2\kappa_3 M}/{2} \right)$ that $|\mathcal{A}| \le \kappa_3 M$. Since $|\Omega| \ge \kappa_3 M$, we obtain 
\begin{align}\label{eq:boundsmallerset}
\|\bm A_{\Omega}^T\bm x\|^2 = \sum_{i \in \Omega} \left(\bm a_i^T\bm x\right)^2 \ge \sum_{i \in \mathcal{A}} \left(\bm a_i^T\bm x\right)^2 = \sum_{ i=1 }^M  \left(\bm a_i^T\bm x\right)^2 \mathbb{I}\left\{  \left(\bm a_i^T\bm x\right)^2 \le \gamma^2 \right\}, 
\end{align}
where the last equality follows from \eqref{eq2:lem cov A1}. Since \(\bm a_i^T\bm x\) has distribution \(P_x\), we compute
\begin{align}\label{eq6:lem cov A1 5}
\mu_x & := \E\left[ \left(\bm a_i^T\bm x\right)^2 \mathbb{I}\left\{  \left(\bm a_i^T\bm x\right)^2 \le \gamma^2 \right\} \right] 
\end{align}
where  \(\mu_x\) is bounded below by the \(\mu\) in \eqref{eq:gamma sub 1}. Applying Hoeffding's inequality for bounded random variables yields
\begin{align*}
\P\left( \left|\sum_{i=1}^M \left(\bm a_i^T\bm x\right)^2 \mathbb{I}\left\{  \left(\bm a_i^T\bm x\right)^2 \le \gamma^2 \right\} - M \mu_x \right| \ge \eta M \mu \right) \le 2\exp\left( - \frac{\eta^2\mu^2 M}{2\gamma^4} \right).  
\end{align*}
This implies that it holds with probability at least $1-2\exp\left( - {\eta^2\mu^2 M}/{(2\gamma^4)} \right)$ that 
\begin{align*}
 \sum_{i = 1}^M \left(\bm a_i^T\bm x\right)^2 \mathbb{I}\left\{  \left(\bm a_i^T\bm x\right)^2 \le \gamma^2 \right\}  \ge  (1-\eta) M \mu_x, 
\end{align*} 
Finally, using the union bound and taking $\mu = \min_{x \in \mathbb S^{m-1}} \mu_x$, the result holds with probability at least 
\begin{align}\label{eq:prop spec 5}
1 - 2 \left( \frac{16c_3}{\eta\sqrt{\mu } } + 1 \right)^m \left( \exp\left( -\frac{\eta^2\kappa_3 M}{2} \right)  +  \exp\left( -\frac{\eta^2\mu^2 M}{2\gamma^4} \right) \right) .
\end{align}

\end{proof}

\section{Proofs for \Cref{sec:ext}}\label{app:ext}

\subsection{Gaussian Sequence with Geometrically Decaying Covariance}\label{app:ext 1}

\begin{proposition}\label{prop:gaussian-cov-decay-mixing}
Let $\{\bm a_i\}_{i\in\mathbb Z}\subseteq\mathbb R^m$ be a centered stationary
jointly Gaussian sequence such that $\bm a_i\sim \mathcal N(0,\bm I_m),\ i\in\mathbb Z.$
Let $\Gamma(k):=\operatorname{Cov}(\bm a_0,\bm a_k).$ Suppose that there exist constants $C_\Gamma>0$ and
$\rho_\Gamma\in(0,1)$ such that
\[
\|\Gamma(k)\|\le C_\Gamma \rho_\Gamma^k,
\qquad k\ge 1.
\]
Then $\{\bm a_i\}_{i\in\mathbb Z}$ is geometrically strong mixing.
\end{proposition}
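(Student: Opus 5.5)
The approach is to reduce strong mixing of a Gaussian sequence to a purely \emph{linear} (second-order) quantity and then control that quantity using the geometric decay of $\Gamma(k)$. For jointly Gaussian processes, the Kolmogorov--Rozanov theorem gives
\[
\alpha(k)\le r(k)\le 2\pi\,\alpha(k),\qquad
r(k):=\sup\Bigl\{\operatorname{Corr}(X,Y):X\in\overline{\mathrm{span}}\{\bm a_s^T\bm e_j:s\le t\},\ Y\in\overline{\mathrm{span}}\{\bm a_s^T\bm e_j:s\ge t+k\}\Bigr\}.
\]
By stationarity, $r(k)$ does not depend on $t$. So it suffices to show $r(k)\le C\rho^{k}$ for some $C>0$ and $\rho\in(0,1)$, and this is the target of the argument.

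\textbf{Step 1 (spectral density).} Since $\|\Gamma(k)\|\le C_\Gamma\rho_\Gamma^k$ and $\Gamma(-k)=\Gamma(k)^T$, the spectral density
\[
\bm f(\omega)=\sum_{k\in\mathbb Z}\Gamma(k)e^{\mathrm{i}k\omega}
\]
converges absolutely. It extends analytically to the annulus $\rho_\Gamma<|z|<\rho_\Gamma^{-1}$.

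\textbf{Step 2 (nondegeneracy).} I need a lower bound $\bm f(\omega)\succeq c\bm I_m$ with $c>0$. I expect this to be the main obstacle. Covariance decay alone does not give it: a unit-root MA component makes $\bm f$ vanish at a point. I would first try to handle that case separately. If $\Gamma$ has finite range $q$, the process is $q$-dependent, so $\alpha(k)=0$ for $k>q$. In the general degenerate case, I would try to absorb finitely many zeros of the analytic $\det\bm f$ into a polynomial factor. If that cannot be done cleanly, I would add $\bm f\succeq c\bm I_m$ as an explicit nondegeneracy hypothesis, in line with the one used in \Cref{thm:lowerbound:general}.

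\textbf{Step 3 (causal representation).} With $\bm f\succeq c\bm I_m$ and $\bm f$ analytic, $\log\det\bm f$ is integrable. The matrix spectral factorization $\bm f=\bm\Phi\bm\Phi^*$ then has an outer factor $\bm\Phi(z)=\sum_{j\ge0}\bm\Psi_jz^j$ analytic on a disc of radius greater than $1$. This gives $\|\bm\Psi_j\|\le C'\rho'^j$ and the causal representation
\[
\bm a_t=\sum_{j\ge0}\bm\Psi_j\bm\varepsilon_{t-j},
\]
with $\{\bm\varepsilon_s\}$ i.i.d.\ $\mathcal N(\bm 0,\bm I_m)$. Moreover, the past linear span of $\{\bm a_s\}$ equals that of $\{\bm\varepsilon_s\}$.

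\textbf{Step 4 (bounding $r(k)$).} Take $Y=\sum_{s\ge t+k}\bm c_s^T\bm a_s$, a finite sum first and then a limit, with $\mathrm{Var}(Y)=1$. The lower bound $\bm f\succeq c\bm I_m$ gives $\sum_s\|\bm c_s\|^2\le 1/(2\pi c)$. Split
\[
Y=Y_1+Y_2,\qquad Y_2=\sum_{s\ge t+k}\bm c_s^T\sum_{j\ge s-t}\bm\Psi_j\bm\varepsilon_{s-j}.
\]
Here $Y_1$ collects the innovations $\bm\varepsilon_u$ with $u>t$, so it is uncorrelated with every $X$ in the past span. By Cauchy--Schwarz and the geometric tail of $\bm\Psi_j$,
\[
\operatorname{Var}(Y_2)\le\Bigl(\sum_s\|\bm c_s\|^2\Bigr)\sum_{s\ge t+k}\ \sum_{j\ge s-t}\|\bm\Psi_j\|^2\lesssim\rho'^{2k}.
\]
Hence $\operatorname{Corr}(X,Y)=\operatorname{Corr}(X,Y_2)\le\sqrt{\operatorname{Var}(Y_2)}\lesssim\rho'^{k}$. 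Therefore $r(k)\le C\rho'^k$, and so $\alpha(k)\le C\rho'^k$, which is geometric strong mixing.
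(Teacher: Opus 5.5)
Your Step~2 is a genuine gap, and for $m\ge 2$ it cannot be closed. Geometric covariance decay does not rule out $\det\bm f\equiv 0$. In that case there are no isolated zeros to absorb into a polynomial factor, and the proposition itself fails.

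Take $\{\xi_t\}$ i.i.d.\ $\mathcal N(0,1)$ and $\psi_j:=c_0\rho^{j-1}/(j-1)!$ for $j\ge 1$, with $c_0$ chosen so that $\sum_j\psi_j^2=1$. Set $\bm a_t:=(\xi_t,\eta_t)^T$ with $\eta_t:=\sum_{j\ge1}\psi_j\xi_{t-j}$. Then $\bm a_t\sim\mathcal N(\bm 0,\bm I_2)$ and the sequence is stationary. For $k\ge1$ the entries of $\Gamma(k)$ are $0$, $\psi_k$, $0$, $\sum_j\psi_j\psi_{j+k}$, so $\|\Gamma(k)\|\le C\rho^k$. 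Yet $\bm f(\omega)$ has rank one for every $\omega$.

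For each $i\ge 0$ the future span $\overline{\operatorname{span}}\{\bm a_s:s\ge t+n\}$ contains
\[
\eta_{t+n+i}-\sum_{j=1}^{i}\psi_j\,\xi_{t+n+i-j}=\sum_{v\ge 0}\psi_{i+v+1}\,\xi_{t+n-1-v}.
\]
Suppose a real $x\in\ell^2$ is orthogonal to all of these, i.e.\ $\sum_v\psi_{i+v+1}x_v=0$ for all $i\ge0$. Put $X(\zeta):=\sum_v x_v\zeta^{-v}$. Then the Laurent expansion of $e^{\rho\zeta}X(\zeta)$ on $|\zeta|>1$ has no nonnegative powers. So $e^{\rho\zeta}$ would be meromorphic at $\infty$ unless $X\equiv 0$, hence $x=0$. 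These variables therefore span a dense subspace of $\overline{\operatorname{span}}\{\xi_u:u<t+n\}$. In particular $\xi_t$ lies in the closed future span for every $n\ge 1$. Taking $A=B=\{\xi_t>0\}$ (with $B$ a.s.\ equal to a future event) gives $\alpha(n)\ge 1/4$ for all $n$, so this sequence is not strongly mixing at all.

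For $m=1$ your polynomial-factor idea does work. There $f$ is real-analytic, so $f=|P(e^{\mathrm i\omega})|^2w$ with $P$ a real polynomial of degree $q$ and $w$ analytic and positive. Then $a_t=\sum_{j=0}^{q}p_jb_{t-j}$ for a process $b$ with spectral density $w$ (note $1/P\in L^2(f\,d\omega)$), and $r_a(n)\le r_b(n-q)$.

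So the hypothesis you offered as a fallback, $\bm f\succeq c\bm I_m$, is exactly what the statement is missing. Under it your Steps~3--4 go through; with your normalization of $\bm f$ the bound is $\sum_s\|\bm c_s\|^2\le 1/c$. You can even drop the spectral factorization. Take finite sums $X=\sum_{s\le t}\bm b_s^T\bm a_s$ and $Y=\sum_{s\ge t+n}\bm c_s^T\bm a_s$, with $\|\bm b\|^2:=\sum_s\|\bm b_s\|^2$ and similarly for $\bm c$. Schur's test on the cross-covariance $\sum_{s,s'}\bm b_s^T\Gamma(s'-s)\bm c_{s'}$ gives $|\operatorname{Cov}(X,Y)|\le\sum_{k\ge n}\|\Gamma(k)\|\,\|\bm b\|\,\|\bm c\|$. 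Meanwhile $\bm f\succeq c\bm I_m$ gives $\operatorname{Var}(X)\ge c\|\bm b\|^2$ and $\operatorname{Var}(Y)\ge c\|\bm c\|^2$. Hence $r(n)\le c^{-1}\sum_{k\ge n}\|\Gamma(k)\|$.

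The paper's proof does not supply the missing nondegeneracy either. It cites a ``standard Gaussian comparison bound'' $\alpha(n)\le C_m\sum_{r,s\ge0}\|\Gamma(n+r+s)\|$ with $C_m$ depending only on $m$. The example above refutes this: the right side is $O(\rho^n)$ while $\alpha(n)\ge 1/4$. A correct bound of this type must carry the factor $1/c$ from the lower bound on the spectral density, as in the estimate just given.
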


\begin{proof}
Thanks to the stationarity, it is enough to show that
\[
\alpha(n)
:=
\sup_{A\in\mathcal F_{-\infty}^{0},\,
B\in\mathcal F_{n}^{\infty}}
\left|
\mathbb P(A\cap B)-\mathbb P(A)\mathbb P(B)
\right|
\le
C_\alpha \rho_\alpha^n,
\qquad n\ge 1,
\]
Since the sequence is Gaussian, the dependence between the past
$\mathcal F_{-\infty}^{0}$ and the future $\mathcal F_n^\infty$ is fully
determined by the corresponding covariance structure. We use the following
standard Gaussian comparison bound: for stationary Gaussian sequences, there
exists a constant $C_m>0$, depending only on the fixed dimension $m$, such that
\[
\alpha(n)
\le
C_m
\sum_{r=0}^{\infty}
\sum_{s=0}^{\infty}
\left\|
\operatorname{Cov}(\bm a_{-r},\bm a_{n+s})
\right\|.
\]
By stationarity,
\[
\operatorname{Cov}(\bm a_{-r},\bm a_{n+s})
=
\operatorname{Cov}(\bm a_0,\bm a_{n+r+s})
=
\Gamma(n+r+s).
\]
Therefore,
\[
\alpha(n)
\le
C_m
\sum_{r=0}^{\infty}
\sum_{s=0}^{\infty}
\|\Gamma(n+r+s)\|.
\]
Using the geometric covariance decay assumption,
\[
\|\Gamma(n+r+s)\|
\le
C_\Gamma \rho_\Gamma^{n+r+s}.
\]
Hence
\[
\begin{aligned}
\alpha(n)
\le
C_m C_\Gamma
\sum_{r=0}^{\infty}
\sum_{s=0}^{\infty}
\rho_\Gamma^{n+r+s}  =
C_m C_\Gamma \rho_\Gamma^n
\left(\sum_{r=0}^{\infty}\rho_\Gamma^r\right)
\left(\sum_{s=0}^{\infty}\rho_\Gamma^s\right) =
\frac{C_m C_\Gamma}{(1-\rho_\Gamma)^2}
\rho_\Gamma^n.
\end{aligned}
\]
Thus, by setting
\[
C_\alpha:=\frac{C_m C_\Gamma}{(1-\rho_\Gamma)^2},
\qquad
\rho_\alpha:=\rho_\Gamma,
\]
we obtain
\[
\alpha(n)\le C_\alpha \rho_\alpha^n,
\qquad n\ge 1.
\]
This proves geometric strong mixing.
\end{proof}

\subsection{Proof of \Cref{thm:mixing-lower-bound}}\label{app:ext 2}

We first establish the following operator norm bound for geometrically strong mixing sequence. 
 To prove this theorem, the key step is to derive concentration bounds for the following two empirical quantities associated with each fixed direction \(\bm x\in\mathbb S^{m-1}\):
\[
Y_i^{(\bm x,\gamma)}:=\bm 1\{|\bm a_i^T \bm x|\le \gamma\},
\qquad
Z_i^{(\bm x,\gamma)}:=(\bm a_i^T \bm x)^2\bm 1\{|\bm a_i^T \bm x|\le \gamma\}.
\]

Then the fixed-direction concentration step can be replaced by Bernstein-type inequalities for the bounded and geometrically strong mixing sequences \cite[Theorem 1]{merlevede2009bernstein}: 
\begin{align}
\mathbb P\!\left(
\left|
\sum_{i=1}^M \big(Y_i^{(\bm x,\gamma)}-\mathbb E Y_i^{(\bm x,\gamma)}\big)
\right|\ge t
\right)
&\le
\exp\!\left(
-\frac{ ct^2}{M + t (\log M) (\log \log M)}
\right), \label{eq:mixing-Y}\\
\mathbb P\!\left(
\left|
\sum_{i=1}^M \big(Z_i^{(\bm x,\gamma)}-\mathbb E Z_i^{(\bm x,\gamma)}\big)
\right|\ge t
\right)
&\le
\exp\!\left(
-\frac{ ct^2}{M\gamma^4 + \gamma^2 t (\log M) (\log \log M)}
\right).  \label{eq:mixing-Z}
\end{align}

\begin{lemma}
\label{lem:mixing-opnorm-gaussian}
Let $\{\bm a_i\}_{i=1}^M\subseteq\mathbb R^m$ be a standard Gaussian and geometrically strong mixing sequence, i.e., there exist
constants $C_\alpha>0$ and $\rho\in(0,1)$ such that
\[
\bm a_i\sim \mathcal N(0,I_m),\quad i=1,\ldots,M, \qquad 
\alpha(k)\le C_\alpha \rho^k,\qquad k\ge 1.
\]
Then there exist constants $B>0$ and $c>0$, depending only on
$C_\alpha$ and $\rho$, such that whenever $M\ge c m$, it holds with probability
at least $1-\exp(-cM)$
that
\[
\left\|
\sum_{i=1}^M \bm a_i \bm a_i^T
\right\|
\le BM.
\]
\end{lemma}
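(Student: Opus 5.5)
We bound the operator norm through an $\varepsilon$-net, so that it suffices to control the quadratic form $\sum_{i=1}^M(\bm a_i^T\bm x)^2$ in each fixed direction. Fix $\varepsilon=1/4$ and let $\mathcal N_{1/4}\subseteq\mathbb S^{m-1}$ be a net with $|\mathcal N_{1/4}|\le 9^m$ by \eqref{eq0:lem cov A1}. Exactly as in \eqref{eq1:prop cov A},
\[
\Bigl\|\sum_{i=1}^M\bm a_i\bm a_i^T\Bigr\|\le 2\max_{\bm x\in\mathcal N_{1/4}}\sum_{i=1}^M(\bm a_i^T\bm x)^2 .
\]
For fixed $\bm x$, the scalar sequence $g_i:=\bm a_i^T\bm x\sim\mathcal N(0,1)$ is a measurable function of $\bm a_i$ alone. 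Hence it inherits the mixing coefficients $\alpha(k)\le C_\alpha\rho^k$.

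The fixed-direction bound rests on a truncation. Split $g_i^2=g_i^2\bm 1\{|g_i|\le\gamma_0\}+g_i^2\bm 1\{|g_i|>\gamma_0\}$ at a level $\gamma_0$ to be chosen; the bounded part is what \eqref{eq:mixing-Z} controls. Applying \eqref{eq:mixing-Z} with $t=M$ gives a tail of the form $\exp(-cM/(\gamma_0^4+\gamma_0^2L_M))$. This is only $\exp(-cM_{\rm eff})$, not $\exp(-cM)$, so a naive use of \eqref{eq:mixing-Z} loses the $L_M$ factor.

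To reach an exponent linear in $M$, I would instead use a blocking/exponential-moment argument that exploits Gaussianity, which I expect to be the cleaner route.

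\emph{Covariance bound.} I assume the mixing sequence comes with summable cross-covariances, e.g.\ from the setting of \Cref{exam:station}, or by the Gaussian covariance inequality $|\mathrm{Cov}(g_i,g_j)|\lesssim\alpha(|i-j|)^{1/2}$ applied to $L^4$ variables. Then the vector $\bm g=(g_1,\dots,g_M)\sim\mathcal N(0,\Sigma_{\bm x})$ satisfies $\|\Sigma_{\bm x}\|\le\sum_{k\in\mathbb Z}\sup_i|\mathrm{Cov}(g_i,g_{i+k})|\le B_0$, with $B_0=1+2\sum_{k\ge1}C'\rho^{k/2}$ depending only on $C_\alpha,\rho$.

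\emph{Hanson--Wright step.} Write $\sum_i g_i^2=\|\bm g\|^2=\bm w^T\Sigma_{\bm x}\bm w$ with $\bm w\sim\mathcal N(0,\bm I_M)$. The Hanson--Wright inequality, or simply the Laplace transform of a Gaussian quadratic form, gives $\|\bm g\|^2\le \operatorname{tr}\Sigma_{\bm x}+C(\|\Sigma_{\bm x}\|_F\sqrt{s}+\|\Sigma_{\bm x}\|s)$ with probability $1-e^{-s}$. Using $\operatorname{tr}\Sigma_{\bm x}=M$, $\|\Sigma_{\bm x}\|_F\le\sqrt{M}B_0$ and taking $s=c_0M$, we get $\|\bm g\|^2\le B_1M$ with probability $1-e^{-c_0M}$.

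\emph{Union bound.} Over the net this gives failure probability $9^me^{-c_0M}\le e^{-c_0M/2}$ once $M\ge cm$ with $c=2\log 9/c_0$. Setting $B=2B_1$ concludes.

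The main obstacle is the covariance bound: justifying $\|\Sigma_{\bm x}\|\le B_0$ from the mixing hypothesis alone. For jointly Gaussian sequences, strong mixing implies $|\mathrm{Corr}(g_i,g_j)|\le\rho_{\max}(|i-j|)\le 2\pi\alpha(|i-j|)$ by the Kolmogorov--Rozanov inequality, which gives geometric decay and settles it. I would cite this. If that is unavailable, the fallback is the truncation route above, which yields the weaker probability $1-\exp(-cM_{\rm eff})$ and would still suffice for \Cref{thm:mixing-lower-bound}, since its other terms already involve $M_{\rm eff}$.
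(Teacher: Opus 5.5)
Your proposal is correct and follows essentially the same route as the paper: a $1/4$-net with the factor-$2$ comparison, geometric decay of $\mathrm{Cov}(\bm a_i,\bm a_j)$ giving $\|\Sigma_{\bm x}\|\le L$ via row sums, a Gaussian quadratic-form (Laurent--Massart/Hanson--Wright) tail at level $t\asymp M$, and a union bound using $M\ge cm$. The only difference is that the paper proves the covariance decay directly from the bivariate quadrant identity $\mathbb P(X\ge 0,Y\ge 0)-\tfrac14=\tfrac{1}{2\pi}\arcsin r$, which is exactly the mechanism behind the Kolmogorov--Rozanov bound $\rho\le 2\pi\alpha$ that you cite, so your truncation fallback is never needed.
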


\begin{proof}
We first show that geometric strong mixing implies geometric decay of the
cross-covariance matrices. Fix \(i,j\in[M]\) and \(\bm u,\bm v\in\mathbb S^{m-1}\).
Define
\[
X:=\bm u^T \bm a_i,\qquad Y:=\bm v^T \bm a_j.
\]
Then \((X,Y)\) is a centered bivariate Gaussian vector with unit variances and
correlation
\[
r_{ij}(\bm u,\bm v)
:=
\mathbb E[(\bm u^T \bm a_i)(\bm v^T \bm a_j)]
=
\bm u^T \operatorname{Cov}(\bm a_i,\bm a_j)\bm v.
\]
For bivariate Gaussian random variables, the quadrant-event identity gives
\[
\mathbb P(X\ge 0,Y\ge 0)-\frac14
=
\frac{1}{2\pi}\arcsin r_{ij}(\bm u,\bm v).
\]
Hence
\[
\alpha(\sigma(X),\sigma(Y))
\ge
\frac{1}{2\pi}|\arcsin r_{ij}(\bm u,\bm v)|.
\]
Since \(\sigma(X)\subseteq \sigma(\bm a_i)\) and \(\sigma(Y)\subseteq\sigma(\bm a_j)\),
the strong-mixing assumption implies
\[
\alpha(\sigma(X),\sigma(Y))
\le
\alpha(|i-j|)
\le
C_\alpha \rho^{|i-j|}.
\]
Therefore,
\[
|\arcsin r_{ij}(\bm u,\bm v)|
\le
2\pi C_\alpha \rho^{|i-j|}.
\]
Consequently, there exist constants \(C_1>0\) and \(\rho_1\in(0,1)\), depending
only on \(C_\alpha\) and \(\rho\), such that
\[
|r_{ij}(\bm u,\bm v)|
\le
C_1\rho_1^{|i-j|}.
\]
Taking the supremum over \(\bm u,\bm v\in\mathbb S^{m-1}\) yields
\[
\|\operatorname{Cov}(\bm a_i,\bm a_j)\|
\le
C_1\rho_1^{|i-j|}.
\]

Next, fix \(\bm x\in\mathbb S^{m-1}\) and define
\[
\bm b_{\bm x}:=(\bm a_1^T \bm x,\ldots,\bm a_M^T \bm x)^T\in\mathbb R^M.
\]
Then \(\bm b_{\bm x}\) is a centered Gaussian vector with covariance matrix
\(\bm\Sigma_{\bm x}\in\mathbb R^{M\times M}\) satisfying
\[
(\bm\Sigma_{\bm x})_{ij}
=
\bm x^T \operatorname{Cov}(\bm a_i,\bm a_j)\bm x.
\]
Since \(\bm a_i\sim\mathcal N(\bm 0,\bm I_m)\), we have \(\operatorname{tr}(\bm\Sigma_{\bm x})=M\).
Moreover, by the covariance decay just proved,
\[
\|\bm\Sigma_{\bm x}\|
\le
\max_{i\in[M]}\sum_{j=1}^M |(\bm\Sigma_{\bm x})_{ij}|
\le
1+2C_1\sum_{h=1}^{\infty}\rho_1^h
=:L.
\]
Therefore,
\[
\operatorname{tr}(\bm\Sigma_{\bm x}^2)
\le
\|\bm\Sigma_{\bm x}\|\operatorname{tr}(\bm\Sigma_{\bm x})
\le
LM.
\]

By the Gaussian quadratic-form concentration inequality, for every \(t>0\),
\[
\mathbb P\left(
\|\bm b_{\bm x}\|^2
\ge
M+2\sqrt{LMt}+2Lt
\right)
\le
e^{-t}.
\]
Equivalently,
\[
\mathbb P\left(
\sum_{i=1}^M (\bm a_i^T \bm x)^2
\ge
M+2\sqrt{LMt}+2Lt
\right)
\le
e^{-t}.
\]

Let \(\mathcal N\) be a \(1/4\)-net of \(\mathbb S^{m-1}\). Then \(|\mathcal N|\le 9^m\).
Taking \(t=c_0M\) for a sufficiently large constant \(c_0>0\) and applying the
union bound, we obtain
\[
\max_{\bm x\in\mathcal N}
\sum_{i=1}^M (\bm a_i^T \bm x)^2
\le
C_LM
\]
with probability at least \(1-9^m e^{-c_0M}\).
If \(M\ge c m\) for a sufficiently large constant \(c>0\), then $9^m e^{-c_0M}\le \exp(-cM)$ after adjusting constants.

Finally, by the standard net argument for symmetric matrices,
\[
\left\|
\sum_{i=1}^M \bm a_i \bm a_i^T
\right\|
\le
2
\max_{\bm x\in\mathcal N}
\bm x^T
\left(
\sum_{i=1}^M \bm a_i \bm a_i^T
\right)
\bm x.
\]
Therefore,
\[
\left\|
\sum_{i=1}^M \bm a_i \bm a_i^T
\right\|
\le
2C_LM.
\]
Setting \(B:=2C_L\) completes the proof.
\end{proof}

\begin{proof}[Proof of \Cref{thm:mixing-lower-bound}]
Let \(\bm A\in\mathbb R^{m\times M}\) be the matrix whose columns are
\(\bm a_1,\ldots,\bm a_M\), and let \(\bm A_\Omega\) be the submatrix whose columns are
\(\{\bm a_i:i\in\Omega\}\). 

Let \(\mathcal N_\epsilon\) be an \(\epsilon\)-net of \(\mathbb S^{m-1}\) with
\[
|\mathcal N_\epsilon|
\le
\left(\frac{2}{\epsilon}+1\right)^m.
\]
For any \(\bm x\in\mathbb S^{m-1}\), choose \(\bm x_0\in\mathcal N_\epsilon\) such that
\(\|\bm x-\bm x_0\|\le \epsilon\). On the event \(\mathcal E_{\rm op}\), we have
\[
\|\bm A_\Omega^T \bm x\|
\ge
\|\bm A_\Omega^T \bm x_0\|-\epsilon\|\bm A_\Omega\|
\ge
\|\bm A_\Omega^T \bm x_0\|-\epsilon\|\bm A\|
\ge
\|\bm A_\Omega^T \bm x_0\|-\epsilon\sqrt{BM},
\]
where the last inequality follows from \Cref{lem:mixing-opnorm-gaussian}, which holds with probability \(1-\exp(-cM)\). Therefore,
\[
\sigma_{\min}(\bm A_\Omega)
\ge
\min_{\bm x\in\mathcal N_\epsilon}\|\bm A_\Omega^T \bm x\|
-\epsilon\sqrt{BM}.
\]

We next prove a uniform lower bound over the net. Fix \(\bm x\in\mathcal N_\epsilon\)
and define
\[
\mathcal A_{\bm x}:=
\left\{
i\in[M]: |\bm a_i^T \bm x|\le \gamma
\right\}.
\]
Since \(\bm a_i^T \bm x\sim \mathcal N(0,1)\), we have
\[
\mathbb E|\mathcal A_{\bm x}|
=
M\mathbb P(|g|\le \gamma)
=
(1-\eta)\kappa M.
\]
Applying \eqref{eq:mixing-Y} with \(t=\eta\kappa M\) yields
\[
\mathbb P\left(|\mathcal A_{\bm x}|\ge \kappa M\right)
\le
2\exp\left(-c\eta^2\kappa^2M_{\rm eff}\right),
\]
where the constant \(c>0\) may change from line to line. Hence, on the
complementary event, \(|\mathcal A_{\bm x}|\le \kappa M\).

Since \(|\Omega|\ge \kappa M\), and \(\mathcal A_{\bm x}\) consists of the indices with
the smallest projected magnitudes below the threshold \(\gamma\), we have
\[
\sum_{i\in\Omega}(\bm a_i^T \bm x)^2
\ge
\sum_{i\in\mathcal A_{\bm x}}(\bm a_i^T \bm x)^2
=
\sum_{i=1}^{M}
(\bm a_i^T \bm x)^2\bm 1\{|\bm a_i^T \bm x|\le \gamma\}.
\]
Moreover,
\[
\mathbb E\left[
(\bm a_i^T \bm x)^2\bm 1\{|\bm a_i^T \bm x|\le \gamma\}
\right]
=
\mu.
\]
Applying \eqref{eq:mixing-Z} with \(t=\eta\mu M\) gives
\[
\mathbb P\left(
\sum_{i=1}^{M}
(\bm a_i^T \bm x)^2\bm 1\{|\bm a_i^T \bm x|\le \gamma\}
\le
(1-\eta)\mu M
\right)
\le
2\exp\left(
-\frac{c\eta^2\mu^2}{\gamma^4}M_{\rm eff}
\right).
\]
Thus, for this fixed \(\bm x\in\mathcal N_\epsilon\), with probability at least
\[
1
-
2\exp\left(-c\eta^2\kappa^2M_{\rm eff}\right)
-
2\exp\left(
-\frac{c\eta^2\mu^2}{\gamma^4}M_{\rm eff}
\right),
\]
we have
\[
\|\bm A_\Omega^T \bm x\|^2
=
\sum_{i\in\Omega}(\bm a_i^T \bm x)^2
\ge
(1-\eta)\mu M.
\]
Taking a union bound over all \(\bm x\in\mathcal N_\epsilon\) gives
\[
\min_{\bm x\in\mathcal N_\epsilon}
\|\bm A_\Omega^T \bm x\|^2
\ge
(1-\eta)\mu M
\]
with probability at least
\[
1
-
2|\mathcal N_\epsilon|
\left[
\exp\left(-c\eta^2\kappa^2M_{\rm eff}\right)
+
\exp\left(
-\frac{c\eta^2\mu^2}{\gamma^4}M_{\rm eff}
\right)
\right].
\]

It remains to choose the net radius. Let $\epsilon := {\eta\sqrt{\mu}}/({3\sqrt B})$. Then, we have 
\[
|\mathcal N_\epsilon|
\le
\left(
\frac{6\sqrt B}{\eta\sqrt\mu}+1
\right)^m.
\]
On the intersection of the above event and \(\mathcal E_{\rm op}\), we obtain
\[
\sigma_{\min}(\bm A_\Omega)
\ge
\sqrt{(1-\eta)\mu M}
-
\frac{\eta\sqrt{\mu M}}{3}.
\]
Since \(\eta\in(0,1/2)\), we have
\[
\sqrt{1-\eta}-\frac{\eta}{3}\ge 1-\eta.
\]
Therefore,
\[
\sigma_{\min}(\bm A_\Omega)
\ge
(1-\eta)\sqrt{\mu M}.
\]
Squaring both sides yields
\[
\lambda_{\min}
\left(
\sum_{i\in\Omega}\bm a_i \bm a_i^T
\right)
=
\sigma_{\min}^2(\bm A_\Omega)
\ge
(1-\eta)^2\mu M.
\]
The desired probability follows by combining the union bound with the probability with which \Cref{lem:mixing-opnorm-gaussian} holds.
\end{proof}

\section{Proofs for \Cref{sec:KSS}}\label{sec:proof:errorrate}

\subsection{Proof of \Cref{prop:opti}}\label{proof:prop opti}

\begin{proof}
For fixed \(C\), minimizing \(F(\bm U,C)\) over each \(\bm U_k\in\mathcal O^{n\times d}\) is equivalent to maximizing
\[
\sum_{i\in C_k}\|\bm U_k^T\bm z_i\|^2
=
\left\langle \bm U_k\bm U_k^T,\sum_{i\in C_k}\bm z_i\bm z_i^T\right\rangle.
\]
Hence \(\bm U_k\) must span a top-\(d\) eigenspace of \(\sum_{i\in C_k}\bm z_i\bm z_i^T\). For fixed \(\bm U\), each data point must be assigned to a cluster maximizing \(\|\bm U_k^T\bm z_i\|\); otherwise moving that point to a cluster with a larger projected norm strictly decreases the objective, up to arbitrary tie-breaking.  
\end{proof}

\subsection{Proof of \Cref{lem:optimal}}\label{app:prop opt}
\begin{proof}
Suppose that \eqref{eq:norm ae} holds, which happens with probability at least $1-4N^{-1}$ according to \Cref{lem:norm ae}. 
Since $\bm U_k^T\bm U_k = \bm I$ for all $k \in [K]$, we have $F(\bm U, C)$ $= \sum_{k=1}^K \sum_{i \in C_k} \left(\|\bm z_i\|^2 -  \|\bm U_k^T\bm z_i\|^2\right).$ For any optimal solution $(\hat{\bm U}, \hat{C})$ of Problem \eqref{eq:loss}, it holds that $F(\hat{\bm U}, \hat{C}) \le F(\bm U^\star, C^\star)$. According to $\bm z_i = \bm U_k^\star \bm a_i + \bm e_i$ for all $i \in C_k^\star$, we compute 
\begin{align*} 
	F(\hat{\bm U}, \hat{C}) - F(\bm U^\star, C^\star) & = \sum_{k=1}^K \sum_{i \in C^\star_k}  \|\bm U_k^{\star T}\bm z_i\|^2 - \sum_{k=1}^K \sum_{i \in \hat{C}_k}  \|\hat{ \bm U}_k^T\bm z_i\|^2 \\
	& =  \sum_{k=1}^K \sum_{i \in C^\star_k}  \|\bm a_i + \bm U_k^{\star T} \bm e_i\|^2 - \sum_{k=1}^K\sum_{l=1}^K \sum_{i \in \hat{C}_k \cap C_l^\star} \|\hat{ \bm U}_k^T\left( \bm U_l^\star \bm a_i + \bm e_i \right) \|^2 \\
	& = \sum_{i = 1}^N \|\bm a_i\|^2 - \sum_{k=1}^K\sum_{l=1}^K \sum_{i \in \hat{C}_k  \cap C_l^\star}   \|\hat{\bm U}_k^T\bm U_l^\star\bm a_i\|^2 - E,
\end{align*}
where \(E\) in the last equality is defined as
\begin{align}\label{eq:E}
E := & \sum_{k=1}^K\sum_{l=1}^K  \sum_{i \in \hat{C}_k  \cap C_l^\star} 2\langle \bm a_i, \bm U_l^{\star T} \hat{\bm U}_k\hat{\bm U}_k^T \bm e_i \rangle - 2 \sum_{k=1}^K \sum_{i \in C_k^\star} \langle \bm a_i, \bm U_k^{\star T}\bm e_i \rangle + \notag \\
&\ \sum_{k=1}^K \sum_{i \in \hat{C}_k } \|\hat{\bm U}_k^T\bm e_i\|^2 - \sum_{k=1}^K \sum_{i \in C_k^\star} \|\bm U_k^{\star T}\bm e_i\|^2. 
\end{align}
This, together with $F(\hat{\bm U}, \hat{C}) \le F(\bm U^\star, C^\star)$, implies 
\begin{align}\label{eq1:prop rate}
&\ \sum_{k=1}^K \sum_{l=1}^K \langle \bm I -  \bm U_l^{\star T}\hat{\bm U}_k\hat{\bm U}_k^T  \bm U_l^{\star}, \sum_{i \in \hat{C}_k\cap C_l^\star} \bm a_i\bm a_i^T \rangle \notag \\
= &\ \sum_{i = 1}^N \|\bm a_i\|^2 -  \sum_{k=1}^K\sum_{l=1}^K \sum_{i \in \hat{C}_k  \cap C_l^\star} \|\hat{\bm U}_k^T\bm U_l^\star\bm a_i\|^2 \le E. 
\end{align}
According to \eqref{eq:E}, we bound 
\begin{align}\label{eq2:prop rate}
E \le 4 \sum_{k=1}^K  \sum_{i \in C_k^\star} \|\bm a_i\| \|\bm e_i\| + 2\sum_{k=1}^K \sum_{i\in C_k^\star} \|\bm e_i\|^2  \le 9N\left( \delta_{\max} \sqrt{nd} + \delta_{\max}^2 n \right),
\end{align}
where the second inequality follows from \eqref{eq:norm ae} and $d \gtrsim (\sqrt{\log N} + 1)^2$ due to \eqref{eq:ndk}. 
\end{proof}

\subsection{Proof of \Cref{prop:error rate}}\label{app:prop error rate}

\begin{proof}
We first show why \eqref{ineq:triangle} is true. Notice that
$$
d - \|\bm U_{l_1}^{\star T}\bm U_{l_2}^\star\|_F^2 = \sum_{i = 1}^d 1 - \sigma_i(\bm U_{l_1}^{\star T}\bm U_{l_2}^\star)^2 \quad \text{and} \quad \min_{\bm Q \in \mathcal{O}(d)} \|\bm U_{l_1}^{\star} - \bm U_{l_2}^\star \bm Q\|^2 = 2\sum_{i = 1}^d 1 - \sigma_i(\bm U_{l_1}^{\star T}\bm U_{l_2}^\star) 
$$
We know that
\begin{align*}
    d - \|\bm U_{l_1}^{\star T}\bm U_{l_2}^\star\|_F^2 
    \leq \min_{\bm Q \in \mathcal{O}(d)} \|\bm U_{l_1}^{\star} - \bm U_{l_2}^\star \bm Q\|^2 
    \leq & 2 \min_{\bm Q \in \mathcal{O}(d)} \min_{\bm Q' \in \mathcal{O}(d)}\left( \|\bm U_{l_1}^{\star} - \hat{\bm U}_{k} \bm Q'\|^2 + \| \hat{\bm U}_{k} \bm Q' - \bm U_{l_2}^\star \bm Q\|^2 \right) \\
    = & 2 \min_{\bm Q' \in \mathcal{O}(d)} \|\bm U_{l_1}^{\star} - \hat{\bm U}_{k} \bm Q'\|^2 + 2\min_{\bm Q\in \mathcal{O}(d)}\| \hat{\bm U}_{k} - \bm U_{l_2}^\star \bm Q\|^2 \\
    \leq & 4\sum_{i = 1}^d 1 - \sigma_i(\bm U_{l_1}^{\star T}\hat{\bm U}_{k}) + 4\sum_{i = 1}^d 1 - \sigma_i(\hat{\bm U}_{k}^T \bm U_{l_2}^\star) \\
    \leq & 4 \left( d - \|\hat{\bm U}_k^T\bm U_{l_1}^\star\|_F^2  +  d - \|\hat{\bm U}_k^T\bm U_{l_2}^\star\|_F^2 \right).
\end{align*}
Since we assume $\delta_{\max} \leq \frac{1}{40} \sqrt{\frac{d}{n}}$, according to \eqref{eq:delta 1}, we have
$$
E = 9N\left( \delta_{\max} \sqrt{nd} + \delta_{\max}^2 n \right) < 10 N \delta_{\max}  \sqrt{nd} \leq \frac{\mu (1-\eta)^2 (1-\mu_{\max}^2) N_{\min}^\star d}{4 } .
$$
Then, \eqref{ineq:triangle} implies that
$$
d - \|\bm U_{l_1}^{\star T}\bm U_{l_2}^\star\|_F^2 \leq \frac{4E}{(1-\eta)^2\mu N_{\min}^\star} < (1-\mu_{\max}^2) d, 
$$
which contradicts the assumption that $d - \|\bm U_{l_1}^{\star T}\bm U_{l_2}^\star\|_F^2 \geq (1-\mu_{\max}^2)d$. Thus, the scenario as stated in \eqref{ineq:countercase} will not happen. That is, for any $k \in [K]$, there exists at most one $l\in [K]$  such that
\begin{equation}
    |\hat{C}_k \cap C_{l}^\star| \geq \kappa_1 N_{l}^\star.
\end{equation}
Besides, for each $l \in [K]$, since $C_{l}^\star = \cup_{k \in [K]} (\hat{C}_k \cap C_{l}^\star)$ and $\kappa_1 \in (0, 1/{K}]$, we know that there exists at least one $k \in [K]$ such that $|\hat{C}_k \cap C_{l}^\star| \geq \kappa_1 N_{l}^\star$. Thus, according to the pigeonhole principle, there is a permutation $\pi:[K] \to [K]$ such that
$$
|\hat{C}_{\pi(k)} \cap C_k^\star| \ge \kappa_1 N_k^\star,\ \forall k \in [K].
$$
In other words, we have $|\hat{C}_{l} \cap C_k^\star| < \kappa_1 N_k^\star$ for all $l \neq \pi(k)$, which means that
$$
|\hat{C}_{\pi(k)} \cap C_k^\star| \ge \left(1 - (K-1) \kappa_1 \right) N_k^\star,\ \forall k \in [K].
$$
\end{proof}

\subsection{Proof of \Cref{thm:opti 1}}\label{app:proof thm}

\begin{proof}
Based on \Cref{prop:error rate}, we choose \(\kappa_1\asymp(\sqrt{n/d}\,\kappa_NK/\mathrm{SNR})^{1/3}\); then \eqref{eq:delta 1} follows from \eqref{eq:SNR1} and the lower bound \(\mu\gtrsim \kappa_1^3\). It remains to justify \eqref{ineq:kappamu3}. Actually, we only need to show it holds as $\kappa_1$ goes to 0, since there is a constant bound $\mu\geq \mu_0$ for all $\kappa_1 \geq \epsilon$. Recall that 
$$
\mu = \sqrt{\frac{2}{\pi}}\int_0^{\gamma} x^2 \exp\left( -\frac{x^2}{2} \right) dx = \frac{1}{3} \sqrt{\frac{2}{\pi}} \gamma^3 + o(\gamma^3)
$$
and 
$$
\frac{(1-\eta)\kappa_1}{2} = \frac{1}{\sqrt{2\pi}}\int_0^{\gamma} \exp\left( -\frac{x^2}{2} \right) dx = \frac{1}{\sqrt{2\pi}} \gamma + o(\gamma).
$$
Then, we compute
$$
\lim_{\kappa_1 \rightarrow 0} \frac{\mu}{\kappa_1^3} = \lim_{\gamma \rightarrow 0} \frac{ \frac{1}{3} \sqrt{\frac{2}{\pi}} \gamma^3}{\left(\sqrt{\frac{2}{\pi}} \frac{\gamma}{1-\eta}\right)^3} = \frac{(1-\eta)^3\pi}{6}.
$$
\end{proof}

\section{Auxiliary Lemmas}

In this section, we present some concentration inequalities for random vectors, which play an important role in our analysis. We first present a tail bound for the weighted sum of squared Gaussian random variables from its mean, which is proved in \citep[Lemma 1]{laurent2000adaptive}. 
\begin{lemma}\label{lem:chi square}
Let $\bx \sim \mN(\bm 0, \bI_d)$ be a Gaussian random vector and $\lambda_1,\dots,\lambda_d > 0$ be constants. It holds for any $t > 0$ that  
\begin{align*}
\P\left( \left|\sum_{i=1}^d \lambda_i^2 (x_i^2 - 1)\right| \ge 2\sqrt{\sum_{i=1}^d \lambda_i^4 t}  + 2 \lambda_{\max}^2 t   \right) \le 2\exp\left(-t\right).
\end{align*}
\end{lemma}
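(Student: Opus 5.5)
The statement to prove is \Cref{lem:chi square}: a two-sided tail bound for a weighted sum of centered $\chi^2_1$ variables. I would prove it by the Chernoff/Cramér method applied separately to each tail, and then combine the two tails by a union bound to get the factor $2$. Write $w_i := \lambda_i^2 > 0$ and $S := \sum_{i=1}^d w_i (x_i^2-1)$. Set $\|w\|_2^2 = \sum_i \lambda_i^4$ and $\|w\|_\infty = \lambda_{\max}^2$.

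\textbf{Upper tail.} For $0 < s < 1/(2\|w\|_\infty)$, independence gives
\[
\log \E e^{sS} = \sum_{i=1}^d \Bigl( -s w_i - \tfrac12 \log(1-2 s w_i) \Bigr).
\]
I would use the elementary inequality $-u - \tfrac12\log(1-2u) \le \frac{u^2}{1-2u}$ for $0\le u<1/2$. It follows by comparing the Taylor series $\sum_{k\ge2} \frac{(2u)^k}{2k}$ with $u^2 \sum_{k\ge 0}(2u)^k$ termwise. This yields
\[
\log \E e^{sS} \le \frac{s^2 \|w\|_2^2}{1-2s\|w\|_\infty},
\]
which is the standard sub-gamma bound with variance factor $v = 2\|w\|_2^2$ and scale $c = 2\|w\|_\infty$.

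\textbf{From the sub-gamma bound to the tail.} The usual optimization of the sub-gamma Chernoff bound gives
\[
\P\bigl(S \ge \sqrt{2vt} + ct\bigr) \le e^{-t}.
\]
I would verify this by computing the Legendre transform: for a sub-gamma variable one has $h^{*}(x) \ge$ the inverse of $t \mapsto \sqrt{2vt}+ct$. Concretely, choose $s = \frac{\sqrt{t}}{\sqrt{v/2}\cdot 2 + \ldots}$, or equivalently quote the standard identity (Boucheron–Lugosi–Massart, Lemma 2.4). Substituting $v$ and $c$ gives exactly the threshold $2\|w\|_2\sqrt t + 2\|w\|_\infty t$.

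\textbf{Lower tail.} For $s>0$,
\[
\log \E e^{-sS} = \sum_i \Bigl( s w_i - \tfrac12\log(1+2sw_i) \Bigr) \le \sum_i s^2 w_i^2,
\]
using $u - \tfrac12\log(1+2u) \le u^2$ for $u \ge 0$. This is purely sub-Gaussian and yields
\[
\P\bigl(S \le -2\|w\|_2\sqrt t\bigr) \le e^{-t},
\]
which is stronger than what is needed.

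\textbf{Conclusion and main obstacle.} A union bound over the two tails gives the stated inequality with probability at most $2e^{-t}$. The only delicate step is the upper-tail optimization, i.e., inverting the sub-gamma Cramér transform to get the clean threshold $\sqrt{2vt}+ct$. I would handle it by exhibiting the explicit choice
\[
s = \frac{\sqrt{t}}{\sqrt{v/2}\cdot 2 + 2c\sqrt{t}}\cdot 2
\]
(tuned so that $1 - 2s\|w\|_\infty$ is controlled) and checking the resulting exponent is at least $t$. Alternatively, I would simply cite \citep[Lemma 1]{laurent2000adaptive} for this step.
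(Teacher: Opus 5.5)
Your proof is correct and takes the same route as Lemma 1 of Laurent and Massart, which is all the paper offers here: it states the lemma with that citation and gives no proof of its own. Your Chernoff bound uses $-u-\tfrac12\log(1-2u)\le u^2/(1-2u)$ for the upper tail and $u-\tfrac12\log(1+2u)\le u^2$ for the lower tail, as in that lemma, and your awkwardly written choice simplifies to $s=\sqrt t/(\|w\|_2+2\|w\|_\infty\sqrt t)$, which gives a Chernoff exponent of exactly $t$ at the threshold $2\|w\|_2\sqrt t+2\|w\|_\infty t$, so the step you flagged as delicate closes without any citation.
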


Then, we present a lemma that provides a concentration inequality on the sum of Lipschitz functions of Gaussian random variables, which can be found in \cite{ledoux2001concentration}. 
\begin{lemma}\label{lem:Lip gaus}
Let $\bm x \sim \mN(\bm 0, \bm I_d)$ be a Gaussian random vector and $f:\R^d \to \R$ be $L$-Lipschitz with respect to the Euclidean norm. Then, we have
\begin{align*}
\P\left(\left|f(\bm x) - \E[f(\bm x)]\right| \ge t \right) \le 2\exp\left( -\frac{t^2}{2L^2} \right),\ \forall t \ge 0.
\end{align*}
\end{lemma}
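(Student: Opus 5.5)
We would treat this as a direct consequence of Gaussian concentration for Lipschitz functions, \Cref{lem:Lip gaus}, combined with a standard Gaussian integration by parts argument. We would first assume $f$ smooth; the general Lipschitz case then follows by convolving $f$ with a small Gaussian mollifier, which keeps the Lipschitz constant at most $L$ and converges uniformly, so tail bounds pass to the limit. By centering, we may assume $\E[f(\bm x)]=0$. By symmetry (replacing $f$ with $-f$) it suffices to bound the upper tail $\P(f(\bm x)\ge t)\le \exp(-t^2/(2L^2))$. A union bound over the two tails then gives the factor $2$.

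\textbf{Key step: a moment generating function bound.} We aim to show $\E[e^{\lambda f(\bm x)}]\le e^{\lambda^2L^2/2}$ for all $\lambda\in\R$. We would use the Maurey--Pisier interpolation. Let $\bm y$ be an independent copy of $\bm x$, and set
\[
\bm x_\theta=\bm x\sin\theta+\bm y\cos\theta,\qquad \bm x_\theta'=\bm x\cos\theta-\bm y\sin\theta .
\]
For each $\theta$, the pair $(\bm x_\theta,\bm x_\theta')$ has the same law as $(\bm x,\bm y)$. Writing
\[
f(\bm x)-f(\bm y)=\int_0^{\pi/2}\langle\nabla f(\bm x_\theta),\bm x_\theta'\rangle\,d\theta,
\]
we apply Jensen's inequality in $\theta$ (with the normalized measure $2\,d\theta/\pi$) and use $\E_{\bm y}e^{-\lambda f(\bm y)}\ge 1$, which follows from $\E f=0$. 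This gives
\[
\E e^{\lambda f(\bm x)}\le \E\exp\!\bigl(\tfrac{\pi\lambda}{2}\langle\nabla f(\bm x),\bm y\rangle\bigr)\le e^{\pi^2\lambda^2L^2/8}.
\]
This route yields the constant $\pi^2/8$ rather than $1/2$. To obtain the sharp constant $2L^2$ in the exponent, we would instead use the Gaussian log-Sobolev inequality together with the Herbst argument. Let $H(\lambda)=\E e^{\lambda f}$. Log-Sobolev gives
\[
\lambda H'(\lambda)-H(\lambda)\log H(\lambda)\le \tfrac{\lambda^2L^2}{2}H(\lambda).
\]
Equivalently, $\frac{d}{d\lambda}\bigl(\lambda^{-1}\log H(\lambda)\bigr)\le L^2/2$. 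Since $\lambda^{-1}\log H(\lambda)\to \E f=0$ as $\lambda\to0$, integrating yields $\log H(\lambda)\le \lambda^2L^2/2$.

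\textbf{Conclusion.} Markov's inequality gives
\[
\P(f\ge t)\le e^{-\lambda t+\lambda^2L^2/2},
\]
and optimizing at $\lambda=t/L^2$ gives $e^{-t^2/(2L^2)}$.

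\textbf{Main obstacle.} The main difficulty is the log-Sobolev inequality itself, $\mathrm{Ent}(g^2)\le 2\E\|\nabla g\|^2$ for the standard Gaussian. We would take it as a known result. If a self-contained proof is required, we would derive it from the one-dimensional two-point inequality via tensorization and the central limit theorem, or via the Ornstein--Uhlenbeck semigroup identity $\frac{d}{dt}\mathrm{Ent}(P_tg)$ combined with the commutation $\nabla P_t=e^{-t}P_t\nabla$. Since the paper cites \cite{ledoux2001concentration}, invoking the log-Sobolev inequality from there is acceptable.
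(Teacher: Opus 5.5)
Your argument is correct. The paper gives no proof of this lemma; it simply cites \cite{ledoux2001concentration}. Your log-Sobolev/Herbst derivation is exactly the standard proof behind that citation: apply $\mathrm{Ent}(g^2)\le 2\E\|\nabla g\|^2$ to $g=e^{\lambda f/2}$, integrate $\frac{d}{d\lambda}\bigl(\lambda^{-1}\log H(\lambda)\bigr)\le L^2/2$ from $\lambda=0$ using $\lambda^{-1}\log H(\lambda)\to\E f=0$, then apply Chernoff with $\lambda=t/L^2$. This yields precisely the exponent $t^2/(2L^2)$ in the statement. Your mollification step also works as claimed, since $|f_\varepsilon-f|\le L\varepsilon\sqrt d$ uniformly. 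You can pass to the limit by bounding $\P(|f-\E f|\ge t)$ by the smoothed tail at level $t-2L\varepsilon\sqrt d$ and letting $\varepsilon\to0$.

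Two small points. First, your opening sentence calls the result a consequence of \Cref{lem:Lip gaus}, which is the very statement being proved. Delete it; nothing in your actual argument uses it. Second, the Maurey--Pisier paragraph only gives $\E e^{\lambda f}\le e^{\pi^2\lambda^2L^2/8}$, hence a tail of $2\exp(-2t^2/(\pi^2L^2))$, which does not establish the lemma as stated. That part is a detour, and the log-Sobolev step (taken from the cited reference) is what actually delivers the claimed constant.
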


Based on the above lemma, we present a probabilistic bound on the deviation of the norm of a weighted sum of squared Gaussian random variables from its mean. This is an extension of \citep[Lemma 7]{pmlr-v139-li21f}.  

\begin{lemma}\label{lem:norm gaus}
Let $\bx \sim \mN(\bm 0, \bI_d)$ be a Gaussian random vector and $\lambda_1,\dots,\lambda_d > 0$ be constants. It holds for any $t > 0$ that  
\begin{align}\label{eq:norm gaus}
\P\left(  \left|\sqrt{\sum_{i=1}^d \lambda_i^2x_i^2} - \sqrt{\sum_{i=1}^d \lambda_i^2}\right| \ge t + 2 \lambda_{\max} \right) \le 2\exp\left( -\frac{t^2}{2\lambda_{\max}^2} \right),
\end{align}
where $\lambda_{\max} = \max\{\lambda_i: i \in [d] \}$. 
\end{lemma}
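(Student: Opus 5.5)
The plan is to combine Gaussian concentration for Lipschitz functions (\Cref{lem:Lip gaus}) with a bound on the gap between the mean of the norm and the square root of its second moment. Set $\bm\Lambda:=\mathrm{diag}(\lambda_1,\dots,\lambda_d)$ and $f(\bm x):=\|\bm\Lambda\bm x\|=\sqrt{\sum_{i=1}^d\lambda_i^2x_i^2}$, and write $S:=\sum_{i=1}^d\lambda_i^2$ and $L:=\lambda_{\max}$. The reverse triangle inequality gives $|f(\bm x)-f(\bm y)|\le\|\bm\Lambda(\bm x-\bm y)\|\le L\|\bm x-\bm y\|$, so $f$ is $L$-Lipschitz. \Cref{lem:Lip gaus} then yields
\begin{align*}
\P\bigl(|f(\bm x)-\E f(\bm x)|\ge t\bigr)\le 2\exp\left(-\frac{t^2}{2L^2}\right),\quad \forall t\ge 0.
\end{align*}

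The remaining step is the deterministic comparison $|\E f(\bm x)-\sqrt{S}|\le 2L$. Once this is shown, the triangle inequality gives $|f(\bm x)-\sqrt S|\le|f(\bm x)-\E f(\bm x)|+2L$, so the event in \eqref{eq:norm gaus} is contained in $\{|f(\bm x)-\E f(\bm x)|\ge t\}$, and the claim follows.

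For the comparison, the upper side is Jensen's inequality: $\E f\le\sqrt{\E f^2}=\sqrt{S}$. For the lower side, I will bound the variance by integrating the tail bound from \Cref{lem:Lip gaus}:
\begin{align*}
\mathrm{Var}(f)=\int_0^\infty 2t\,\P(|f-\E f|\ge t)\,dt\le\int_0^\infty 4t\,e^{-t^2/(2L^2)}\,dt=4L^2.
\end{align*}
This gives $(\E f)^2=S-\mathrm{Var}(f)\ge S-4L^2$. Combining it with the elementary inequality $\sqrt a-\sqrt{\max(a-b,0)}\le\sqrt b$ for $a,b\ge0$ yields $\sqrt S-\E f\le 2L$. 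Alternatively, the Gaussian Poincaré inequality gives $\mathrm{Var}(f)\le L^2$, which is even sharper.

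The only mildly delicate point is this mean-versus-root-second-moment comparison. The constant $2$ in $2\lambda_{\max}$ is exactly what the crude variance bound $4L^2$ produces, so no finer estimate is needed. Everything else is a direct application of the earlier lemma.
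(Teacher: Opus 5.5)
Your proof is correct and follows essentially the same route as the paper's. Both use Lipschitz concentration of $f$ around $\E f$, Jensen for $\E f\le\sqrt{S}$, and the tail-integrated variance bound $\mathrm{Var}(f)\le 4\lambda_{\max}^2$ to get $\sqrt{S}-\E f\le 2\lambda_{\max}$.

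Your uniform step via $\sqrt a-\sqrt{\max(a-b,0)}\le\sqrt b$ (that is, $\E f\ge 0$ already suffices when $\sum_i\lambda_i^2<4\lambda_{\max}^2$) is in fact more sound than the paper's separate treatment of that case. There the paper rescales to $\hat\lambda_i=2\lambda_{\max}\lambda_i/\sqrt{\sum_j\lambda_j^2}$, but the case condition $\sum_i\lambda_i^2\ge 4\lambda_{\max}^2$ is scale-invariant, so the rescaled weights never fall into the first case.
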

\begin{proof} 
We define  $f(\bx) = \sqrt{\sum_{i=1}^d \lambda_i^2x_i^2}/\sqrt{\sum_{i=1}^d \lambda_i^2}$. By calculation, we obtain
\begin{align*}
 \|\nabla f(\bx)\| = \frac{1}{\sqrt{\sum_{i=1}^d \lambda_i^2}}\sqrt{\frac{\sum_{i=1}^d\lambda_i^4x_i^2}{\sum_{i=1}^d\lambda_i^2x_i^2}} \le \frac{\lambda_{\max}}{\sqrt{\sum_{i=1}^d \lambda_i^2}},
\end{align*}
where $\lambda_{\max} = \max\{\lambda_i: i\in [d]\}$. Applying \Cref{lem:Lip gaus} to $f(\bx)$ yields that 
\begin{align*}
\P\left( \left| f(\bx) - \E[f(\bx)] \right| \ge \frac{t}{\sqrt{\sum_{i=1}^d \lambda_i^2}} \right) \le 2\exp\left( -\frac{t^2}{2\lambda_{\max}^2} \right).  
\end{align*}
This implies 
\begin{align}\label{eq1:lem-norm-gaus}
\P\left( \left| \sqrt{\sum_{i=1}^d \lambda_i^2x_i^2} - \E\left[\sqrt{\sum_{i=1}^d \lambda_i^2x_i^2}\right] \right| \ge t \right) \le 2\exp\left( -\frac{t^2}{2\lambda_{\max}^2} \right). 
\end{align}
We first note that
\begin{align}\label{eq2:lem-norm-gaus}
\E\left[\sqrt{\sum_{i=1}^d \lambda_i^2x_i^2}\right] \le \sqrt{\E\left[\sum_{i=1}^d \lambda_i^2x_i^2\right]} = \sqrt{\sum_{i=1}^d\lambda_i^2}.
\end{align}
By letting $X := \sqrt{\sum_{i=1}^d \lambda_i^2x_i^2} \ge 0$ and $\mu := \E[X]$, we compute
\begin{align*}
\mathrm{Var}\left( X \right) & = \E\left[(X-\mu)^2 \right]  =  \int_0^\infty x^2 d\P\left(  \left|X-\mu  \right|\le x \right)   = -\int_0^\infty x^2 d\P\left(  \left|X-\mu  \right| > x \right)\\
&  = \int_0^\infty 2x \P\left(  \left|X-\mu  \right| > x \right) dx \le  \int_0^\infty 4x \exp\left( -\frac{x^2}{2\lambda_{\max}^2} \right) dx = 4\lambda_{\max}^2,
\end{align*} 
where the fourth equality and the last one follow from integration by parts and the inequality is due to \eqref{eq1:lem-norm-gaus}. Thus, we have
\begin{align*}
\mu^2 & = \E\left[X^2\right] - \mathrm{Var}\left( X \right) = \E\left[ \sum_{i=1}^d \lambda_i^2x_i^2 \right] - \mathrm{Var}\left( X \right) \ge \sum_{i=1}^d\lambda_i^2 - 4\lambda_{\max}^2 .
\end{align*}
Suppose that $\sum_{i=1}^d \lambda_i^2 \ge 4\lambda_{\max}^2$.  This implies
\begin{align}\label{eq3:lem-norm-gaus}
\E[X] \ge \sqrt{ \sum_{i=1}^d\lambda_i^2 - 4\lambda_{\max}^2 } \ge  \sqrt{\sum_{i=1}^d\lambda_i^2} - 2\lambda_{\max}. 
\end{align}
Plugging \eqref{eq2:lem-norm-gaus} and \eqref{eq3:lem-norm-gaus} into \eqref{eq1:lem-norm-gaus} yields that \eqref{eq:norm gaus} holds when $\sum_{i=1}^d \lambda_i^2 \ge 4\lambda_{\max}^2$.

On the other hand, suppose that  $\sum_{i=1}^d \lambda_i^2 < 4\lambda_{\max}^2$. Let $\hat{\lambda}_i = 2\lambda_{\max} \lambda_i / \sqrt{\sum_{i=1}^d \lambda_i^2}$ for each $i \in [d]$. Then, one can verify that $\sum_{i=1}^d \hat{\lambda}_i^2 = 4\lambda_{\max}^2$ and $\max\{\hat{\lambda}_i: i \in [d]\} \le 2\lambda_{\max}^2 / \sqrt{\sum_{i=1}^d \lambda_i^2}$. This, together with the above result, implies that 
\begin{align*}
\P\left(  \left|\sqrt{\sum_{i=1}^d \hat{\lambda}_i^2x_i^2} - \sqrt{\sum_{i=1}^d \hat{\lambda}_i^2}\right| \ge t +  \frac{4\lambda_{\max}^2}{\sqrt{\sum_{i=1}^d \lambda_i^2}} \right) \le 2\exp\left( -\frac{t^2\sum_{i=1}^d \lambda_i^2}{8\lambda_{\max}^4} \right). 
\end{align*}
Therefore, we obtain 
\begin{align*}
&\ \P\left(  \left|\sqrt{\sum_{i=1}^d \lambda_i^2x_i^2} - \sqrt{\sum_{i=1}^d \lambda_i^2}\right| \ge \frac{t\sqrt{\sum_{i=1}^d \lambda_i^2}}{2\lambda_{\max}} + 2\lambda_{\max}  \right) \le 2\exp\left( -\frac{t^2\sum_{i=1}^d \lambda_i^2}{8\lambda_{\max}^4} \right).
\end{align*}
Replacing $t$ by $2\lambda_{\max} t/\sqrt{\sum_{i=1}^d \lambda_i^2}$, we directly obtain \eqref{eq:norm gaus}. 
\end{proof} 

To begin, we present a lemma that estimates the norms of Gaussian random vectors, the inner product of Gaussian random vectors, and the product of a matrix by a Gaussian random vector. 

\begin{lemma}\label{lem:norm ae}
Consider the setting of the MoG model in Definition \ref{def:MoG}. Let \(\alpha:=2\sqrt{\log N}+2\). The following statements hold: \\
(i)  For all $i \in C_k^\star$ and all $k \in [K]$, it holds with probability at least $1-4N^{-1}$ that 
\begin{align}\label{eq:norm ae}
\left| \|\bm a_i\| - \sqrt{d} \right|  \le \alpha,\quad \left|\|\be_i\| - \delta_k \sqrt{n}\right| \le  \delta_k \alpha. 
\end{align}
(ii) For all $i \in C_k^\star$ and all $k \in [K]$, it holds with probability at least $1-4N^{-1}$ that 
\begin{align}
\left\|\bm U_k^\star \bm a_i\bm e_i^T \right\| \le \delta_k(\sqrt{d}+\alpha)(\sqrt n+\alpha).
\end{align}
(iii) Let $\bm V \in \mO^{n\times d}$ be given for some $l \in [K]$. For all $i \in C_k^\star$ and all $k \in [K]$, it holds with probability at least $1-2N^{-1}$ that 
\begin{align}\label{eq:norm Ua}
\left| \|\bm V^T\bm U_k^\star\bm a_i\| - \|\bm V^T\bm U_k^\star\|_F  \right| \le \alpha.
\end{align}
\end{lemma}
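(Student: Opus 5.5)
The three parts of \Cref{lem:norm ae} follow from one-dimensional Gaussian concentration for Lipschitz functions. We apply it to each index $i\in[N]$ separately and finish with a union bound over the $N$ samples. The threshold $\alpha=2\sqrt{\log N}+2$ is designed for \Cref{lem:norm gaus}: the additive slack there is $2\lambda_{\max}$, which accounts for the "$+2$", and taking $t=2\lambda_{\max}\sqrt{\log N}$ gives a failure probability $2\exp(-2\log N)=2N^{-2}$ for each sample.

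\emph{Part (i).} Fix $k$ and $i\in C_k^\star$. Apply \Cref{lem:norm gaus} to $\bm a_i\sim\mathcal N(\bm 0,\bm I_d)$ with all $\lambda_j=1$. Then $\lambda_{\max}=1$, and
\[
\bigl|\|\bm a_i\|-\sqrt d\bigr|\le 2\sqrt{\log N}+2=\alpha
\]
except with probability $2N^{-2}$. For the noise, write $\bm e_i=\delta_k\bm g_i$ with $\bm g_i\sim\mathcal N(\bm 0,\bm I_n)$. Applying the same lemma with $\lambda_j=\delta_k$ gives $\bigl|\|\bm e_i\|-\delta_k\sqrt n\bigr|\le \delta_k\alpha$ except with probability $2N^{-2}$. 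A union bound over the $2N$ events gives total failure probability at most $4N^{-1}$.

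\emph{Part (ii).} The matrix $\bm U_k^\star\bm a_i\bm e_i^T$ has rank one, so its spectral norm equals $\|\bm U_k^\star\bm a_i\|\,\|\bm e_i\|$. Since $\bm U_k^\star$ has orthonormal columns, $\|\bm U_k^\star\bm a_i\|=\|\bm a_i\|$. On the event of part (i) this product is at most $(\sqrt d+\alpha)\,\delta_k(\sqrt n+\alpha)$. The bound therefore holds on the same event, with probability at least $1-4N^{-1}$.

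\emph{Part (iii).} Here $\bm V$ is deterministic, so $\bm V^T\bm U_k^\star\bm a_i$ is a centered Gaussian vector with covariance $\bm B\bm B^T$, where $\bm B:=\bm V^T\bm U_k^\star$. Take an SVD $\bm B=\bm P\,\mathrm{diag}(\sigma_j)\,\bm Q^T$. By rotation invariance, $\|\bm B\bm a_i\|$ has the same law as $\sqrt{\sum_j\sigma_j^2x_j^2}$ with $\bm x$ standard Gaussian, and $\sum_j\sigma_j^2=\|\bm B\|_F^2$. Both $\bm V$ and $\bm U_k^\star$ have orthonormal columns, so $\sigma_{\max}\le 1$. \Cref{lem:norm gaus} with $t=2\sqrt{\log N}$ then gives
\[
\bigl|\|\bm B\bm a_i\|-\|\bm B\|_F\bigr|\le t+2\sigma_{\max}\le\alpha
\]
with failure probability at most $2\exp(-t^2/(2\sigma_{\max}^2))\le 2N^{-2}$. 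A union bound over $i$ gives $1-2N^{-1}$.

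\emph{Technical point.} \Cref{lem:norm gaus} assumes $\lambda_j>0$, but some singular values $\sigma_j$ may vanish. We can drop the zero coordinates, since they contribute nothing to the sum. The proof of that lemma also works with nonnegative weights, because the Lipschitz gradient bound is unaffected by zero weights. If $\bm B=0$ the claim is trivial.

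No step is a real obstacle here. The points that need care are bookkeeping:
\begin{itemize}
\item the probability constants must be tracked so that the per-sample failure is $O(N^{-2})$;
\item in part (iii), $\bm V$ must be fixed, i.e.\ independent of the data, before the union bound, since the lemma is not uniform over $\bm V$;
\item the bound $\sigma_{\max}(\bm V^T\bm U_k^\star)\le1$ must be verified.
\end{itemize}
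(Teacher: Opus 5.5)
Your proposal is correct and follows the paper's proof essentially step for step: \Cref{lem:norm gaus} with $t=2\lambda_{\max}\sqrt{\log N}$ plus a union bound over the $N$ samples for (i), the rank-one bound $\|\bm{U}_k^\star\bm{a}_i\bm{e}_i^T\|\le\|\bm{a}_i\|\,\|\bm{e}_i\|$ on the event of (i) for (ii), and the SVD plus rotation-invariance reduction for (iii). The differences are only cosmetic. In (iii) you take $t=2\sqrt{\log N}$ instead of the paper's $t=2\sigma_1\sqrt{\log N}$, and both choices work because $\sigma_1\le 1$. You also explicitly handle vanishing singular values, which the paper's appeal to \Cref{lem:norm gaus} (stated for $\lambda_j>0$) passes over without comment.
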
 
\begin{proof}
According to Definition \ref{def:MoG} and $i \in C_k^\star$, we have $\bm{z}_i = \bm{U}_k^*\bm{a}_i + \bm{e}_i$, where $\bm{a}_i \sim \mN(\bm{0},\bm{I}_{d})$ is independent of $\bm{e}_i \sim \mN(\bm{0},\delta_k^2\bm{I}_{n})$. 

(i) Applying Lemma \ref{lem:norm gaus} to $\bm{a}_i \sim \mN(\bm{0},\bm{I}_{d})$, together with setting $t=2\sqrt{\log N}$ and $\lambda_j=1$ for all $j\in [d]$, yields  
\begin{align*} 
\P\left(\left|\| \ba_i\| - \sqrt{d}\right| \ge 2\sqrt{\log N} + 2 \right) \le 2N^{-2}.
\end{align*} 
This implies that $\left|\| \bm a_i\| - \sqrt{d}\right| \le \alpha $ holds for all $i\in C_k^\star$ and $k \in [K]$ with probability at least $1-2N^{-1}$. Similarly, we obtain that $\left|\| \be_i\| - \delta_k\sqrt{n}\right| \le \delta_k\alpha $ holds for all $i\in C_k^\star$ and $k \in [K]$ with probability at least $1-2N^{-1}$. Finally, using the union bound, we prove \eqref{eq:norm ae}. 

(ii) On the event in part (i), since \(\bm U_k^\star\) has orthonormal columns,
\[
\left\|\bm U_k^\star \bm a_i\bm e_i^T\right\|
\le \|\bm U_k^\star\bm a_i\|\,\|\bm e_i\|
= \|\bm a_i\|\,\|\bm e_i\|
\le \delta_k(\sqrt d+\alpha)(\sqrt n+\alpha).
\]
The same union-bound probability as in part (i) gives the displayed claim.

(iii) Let $\bV^T\bU^\star_k = \bm P\bm{\Sigma}\bm{Q}^T$ be a singular value decomposition of $\bV^T\bU^\star_k$, where $\bm{\Sigma} \in \R^{d\times d}$ with the diagonal elements $0 \le \sigma_{d} \le \dots \sigma_1 \le 1$ being the singular values of $\bV^T\bU^\star_k$, $\bP  \in \mO^{d}$, and $\bQ  \in \mO^{d}$. This, together with the orthogonal invariance of the Gaussian distribution, yields  
\begin{align}\label{eq1:lem norm Gaus}
\|\bV^T\bU^\star_k\ba_i\| = \|\bm{\Sigma}\bm{Q}^T\ba_i\| \overset{d}{=}  \|\bm{\Sigma}\ba_i\| = \sqrt{\sum_{j=1}^{d} \sigma_j^2a_{ij}^2}. 
 \end{align}
Using Lemma \ref{lem:norm gaus} with setting $t=2\sigma_1\sqrt{\log N}$ and $\lambda_j=\sigma_{j} \le 1$ for all $j$ yields  
\begin{align*}
\P\left( \left|\|\bV^T\bU^\star_k\bm a_i\| - \|\bV^T\bU^\star_k\|_F\right| \ge \sigma_1\alpha \right) & = \P\left( \left|\sqrt{\sum_{j=1}^{d} \sigma_j^2a_{ij}^2} - \sqrt{\sum_{j=1}^{d} \sigma_j^2} \right| \ge \sigma_1\alpha \right)   \le 2N^{-2}. 
\end{align*}
This, together with $\sigma_1 \le 1$ and the union bound, yields \eqref{eq:norm Ua}.  
\end{proof}


\end{document}